\DeclareMathOperator{\ddiv}{div}
\DeclareMathOperator{\osc}{osc}
\DeclareMathOperator*{\argmin}{arg\,min}
\DeclareMathOperator{\rot}{\mathbf{rot}}
\newcommand{\E}{\mathcal{E}}
\newcommand{\T}{\mathcal{T}}
\newcommand{\G}{\mathcal{G}}
\newcommand{\N}{\mathcal{N}}
\newcommand{\Pp}[1]{\mathbb{P}_{#1}}
\newcommand{\h}{\mathsf{h}}
\newcommand{\p}{\mathsf{p}}
\newcommand{\n}{\bm{n}}
\newcommand{\jump}[1]{\lbrack\!\lbrack #1 \rbrack\!\rbrack_N}
\newcommand{\average}[1]{\lbrace\!\!\lbrace #1 \rbrace\!\!\rbrace}
\theoremstyle{plain}
\newtheorem{theorem}{Theorem}[section]
\newtheorem{lemma}[theorem]{Lemma}
\theoremstyle{remark}
\newtheorem{remark}{Remark}[section]
\newtheorem{definition}{Definition}[section]
\numberwithin{theorem}{section}
\numberwithin{figure}{section}
\numberwithin{equation}{section}
\newcommand{\TheTitle}{Robust adaptive $hp$ discontinuous Galerkin
  finite element methods for the Helmholtz equation} 
\newcommand{\TheShortTitle}{Robust adaptive $hp$-DG for Helmholtz} 
\title{{\TheTitle}%
\thanks{
The work of the authors has been funded by the Austrian Science Fund (FWF) 
through the projects F~65 and P~29197-N32, and by the Vienna Science and Technology Fund (WWTF) through the project MA14-006.}
}
\newcommand{\TheAuthors}{S. Congreve, J. Gedicke, and I. Perugia}
\author{
Scott Congreve\thanks{University of Vienna, Faculty of Mathematics, Oskar-Morgenstern-Platz 1, 1090 Vienna, Austria, {\tt scott.congreve@univie.ac.at}} 
\and 
Joscha Gedicke\thanks{University of Vienna, Faculty of Mathematics, Oskar-Morgenstern-Platz 1, 1090 Vienna, Austria, {\tt joscha.gedicke@univie.ac.at}}
\and 
Ilaria Perugia\thanks{University of Vienna, Faculty of Mathematics, Oskar-Morgenstern-Platz 1, 1090 Vienna, Austria, {\tt ilaria.perugia@univie.ac.at}}
}
\date{}
\begin{document}
\thispagestyle{empty}

%-- Title  ---------------------------------------------
\maketitle

\begin{abstract}
This paper presents an $hp$ a posteriori error analysis for the 2D Helmholtz equation that is robust in the polynomial degree $p$ and the wave number $k$. 
For the discretization, we consider a discontinuous Galerkin formulation that is unconditionally well posed.
The a posteriori error analysis is based on the  technique of equilibrated fluxes applied to a shifted Poisson problem, with
the error due to the nonconformity of the discretization controlled by a potential reconstruction.
We prove that the error estimator is both reliable and efficient, under the condition that the initial mesh size and polynomial degree
is chosen such that the discontinuous Galerkin formulation converges, i.e., it is out of the regime of pollution.
We confirm the efficiency of an $hp$-adaptive refinement strategy based on the presented robust a posteriori error estimator
via several numerical examples.
\end{abstract}

{\small\noindent\textbf{Keywords}
a posteriori error analysis, $hp$ discontinuous Galerkin finite element method, equilibrated fluxes, potential reconstruction, Helmholtz problem

\noindent
\textbf{AMS subject classification}
65N15,   	%Error bounds
65N30,   	%Finite elements, Rayleigh-Ritz and Galerkin methods, finite methods
65N50      %Mesh generation and refinement
}

%%%%%%%%%%%%%%%%%%%%%%%%%%%%%%%%%%
%% Introduction
%%%%%%%%%%%%%%%%%%%%%%%%%%%%%%%%%%
\section{Introduction}
\label{sec:Intro}
In this paper, we consider the following Helmholtz problem with impedance boundary
condition: Find a (complex) solution $u\in H^2(\Omega)$ such that
\begin{align}\label{problem}
\begin{split}
-\Delta u - k^2 u &= f\quad\textrm{in }\Omega,\\
\nabla u\cdot\n - iku &= g\quad\textrm{on }\partial\Omega,
\end{split}
\end{align}
where $\Omega\subset \mathbb{R}^2$ is a bounded, Lipschitz domain,
$\n$ denotes the outer unit normal on the boundary $\partial\Omega$,
$f\in L^2(\Omega)$, $g\in L^2(\partial\Omega)$, and $k>0$ is the (constant) wavenumber.
\par
The problem \eqref{problem} was shown to be well-posed in \cite{Melenk1995}.
A polynomial-based discontinuous Galerkin (DG) approximation was presented in \cite{MPS2013}, 
which uses the same numerical fluxes as in the ultra weak
variational formulation/plane wave DG methods
\cite{CD1998,CD2003,GHP2009}.
A DG discretization with stabilization terms also containing jumps in high order derivatives was presented in
\cite{FengWu}.
A residual-based a posteriori error estimator for the DG 
method of~\cite{MPS2013} is derived and analyzed in \cite{SZ2015}. 
\par
In this paper we will develop an a posteriori error
estimator based on a local reconstruction of equilibrated fluxes \cite{DEV2016,EV2015}.
Since \eqref{problem} is highly indefinite, it is not clear how to
localize the Helmholtz problem in order to obtain localized 
problems for the error approximation that are well posed.
However, as noted in \cite{BISG1997}, the error has two components,
the interpolation error and the pollution error. While the pollution error is global and
hence cannot be estimated with local error indicators, it is possible to derive
equilibrated a posteriori error estimators for the interpolation error.
\par
This analysis is based on considering a shifted Poisson problem with inhomogeneous Neumann
boundary conditions. Therefore, we can apply the unified framework for equilibrated fluxes \cite{EV2015}
to this auxiliary elliptic problem with an extension for the extra terms resulting
from the handling of the inhomogeneous Robin boundary condition by the DG method. Additionally,
an extra lifting operator is required due to the additional gradient stabilization terms in the 
DG formulation for Helmholtz.
In order to measure the nonconformity of the DG method we locally reconstruct
a conforming potential approximation.
\par
By construction, the a posteriori error estimator captures possible singularities of the solution
correctly, but
is only reliable up to an additional $L^2$ error which resembles
the pollution error.
Note that also the residual a posteriori error estimator for the DG
method in \cite{SZ2015} is only reliable up to the pollution
error, see \cite[Lemma 3.2]{SZ2015}.
\par
We will apply the theory of equilibrated flux and potential
reconstructions \cite{DEV2016,EV2015}
and
derive the a posteriori error estimator of the form
\begin{align*}
\eta_{hp}^2 &:= 
\sum_{T\in\T} \left(
 \|\G(u_{hp})+\bm{\sigma}_{hp}\|_{0,T} 
+ \frac{h_T}{j_{1,1}}\|f + k^2 u_{hp} -\ddiv\bm{\sigma}_{hp}\|_{0,T} \right.\\
&\quad
+ \left.C_{tr}\!\!\!\!\!\sum_{E\in\E(T)\cap \E(\partial\Omega)}\!\!\!\!\! h_E^{1/2}\|\bm{\sigma}_{hp}\cdot\bm{n} + g + iku_{hp} - \gamma k \frac{\h}{\p}(g - \nabla_h u_{hp} \cdot \n + iku_{hp})\|_{0,E}\right)^2\\
&\quad
+ \sum_{T\in\T}\|\G(u_{hp})-\nabla s_{hp}\|_{0,T}^2,
\end{align*}
where $\G(u_{hp})$ denotes a discrete gradient (which we call the \emph{DG gradient}), $\bm{\sigma}_{hp}$ an %the 
equilibrated flux reconstruction,
and $s_{hp}$ a  
potential reconstruction.
The parameter $\gamma$, as well as the mesh function $\h$ 
and the polynomial degree function $\p$ already enter the definition
of the DG methods (see~\eqref{discreteproblem} below), $h_T$ and $h_E$
are the diameter of the element $T$ of the mesh $\T$ and the edge
$E$ of $T$, respectively, $C_{tr}$ is a trace inequality constant, cf. Lemma~\ref{lem:tracebound}, and $j_{1,1}$ is the first positive root of the Bessel
function of the first kind.
We prove that the a posteriori error estimator is reliable 
and efficient, for suitably chosen functions $\bm\sigma_{hp}$ and $s_{hp}$,
up to generic constants which are independent of the wave number,
the polynomial degrees, and the element sizes.
\par
This paper is organized as follows.
In Section~\ref{sec:dg}, we will recall 
the DG method from \cite{MPS2013}.
In Section~\ref{sec:aposteriori}, we will present the a posteriori error estimator and prove its
reliability for any admissible flux and potential reconstructions. 
In Section~\ref{sec:efficiency},
we define specific local reconstructions of flux and potential functions,
such that the error estimator is efficient.
Finally, in Section~\ref{sec:numerics}, we present some numerical experiments.
\par
Throughout this paper, we employ the standard notation for  
(complex) Sobolev spaces $H^m(\omega)$ 
with norm $\|\cdot\|_{m,\omega}$ for (sub)-domains $\omega\subseteq\Omega$, and define $H(\ddiv;\Omega)=\{\bm{\tau}\in [L^2(\omega)]^2 : \ddiv \bm{\tau}\in L^2(\omega)\}$. 
We denote the (complex) $L^2$ inner product by $(\cdot,\cdot)_{\omega}$; 
if $\omega=\Omega$ we simply write $(\cdot,\cdot)$.
The (complex) $L^2$ inner product on the boundary is indicated by a subscript, e.g. $(\cdot,\cdot)_{\partial\omega}$.
By $\lesssim$, we abbreviate the inequality $x\leq C y$, with a generic constant $C$
independent of the wave number, the mesh size, and the polynomial degree,
but possibly dependent on the shape regularity of the mesh.

%%%%%%%%%%%%
%% Section: Discontinuous Galerkin method
%%%%%%%%%%%%%%
\section{The discontinuous Galerkin method}\label{sec:dg}
In this section, we discuss a numerical approximation to \eqref{problem}
based on employing an $hp$-version DG finite element method. We consider the same formulation as in \cite{MPS2013}.

The weak formulation of \eqref{problem} is defined as follows: Find $u\in H^1(\Omega)$ such that
\begin{align}\label{weak:formulation}
	a(u,v) = F(v)\quad\textrm{for all } v\in H^1(\Omega),
\end{align}
with the complex-valued sequilinear form $a(\cdot,\cdot)$ and linear form $F(\cdot)$ given by
\begin{align*}
a(u,v) := (\nabla u , \nabla v) - k^2 (u,v) - ik(u,v)_{\partial\Omega}
\quad\textrm{and}\quad
F(v) := (f,v) + (g, v)_{\partial\Omega}.
\end{align*}
\par
Let $\T$ be a triangulation of $\Omega$ with the set of nodes $\N$ and the set of edges $\E$.
For simplicity of the presentation we restrict ourselves to shape-regular conforming triangulations.
Let $\E(\Omega)$  and $\E(\partial\Omega)$
denote the subset of interior and boundary edges, respectively, and let $\E(T)$ denote the edges of the element
$T\in\T$. Let $\N(\partial\Omega)$ denote the
subset of nodes on the boundary of $\Omega$, $\N(T)$ denote the set of nodes of an element $T\in\T$, and $\N(E)$ the set of nodes of an edge $E\in\E$.
The subset of triangles that share a common node $z\in\N$ is denoted
by $\T(z)$, and the
subset of edges sharing the node $z$ by $\E(z)$.
For any node $z\in\N$, we denote by $\omega_z\subseteq\Omega$ the union of triangles that share the
node $z$. The set of triangles that share a common edge $E\in\E(\Omega)$ is denoted by $\T(E)$. For any
$E\in\E(\Omega)$, we denote by $\omega_E\subseteq\Omega$ the union of the two triangles $T_\pm\in\T$ that
share the edge $E$; we set $\omega_E=T$ for
$E\in\E(\partial\Omega)$. We denote by $h_T$ and $h_E$ the
diameter of $T$ and the length of $E$, respectively.
\par
We make use of the standard notation on averages and jumps of
scalar functions $v$ across edges $E\in\E(\Omega)$ with $E=\partial T_+\cap \partial T_-$
\begin{align}
\average{v} := \frac{1}{2}\left( v|_{T_+} + v|_{T_-} \right),
\qquad
\jump{v} := v|_{T_+}\bm{n}_{+} + v|_{T_-}\bm{n}_{-},
\end{align}
and, for vector-valued functions $\bm{\tau}$,
\begin{align*}
\average{\bm{\tau}} := \frac{1}{2}\left( \bm{\tau}|_{T_+} + \bm{\tau}|_{T_-} \right),
\qquad
\jump{\bm{\tau}} := \bm{\tau}|_{T_+}\cdot\bm{n}_{+} + \bm{\tau}|_{T_-}\cdot\bm{n}_{-},
\end{align*}
where $\bm{n}_{\pm}$ denotes the unit outer normal vector of $T_\pm$.
For any scalar function $v=v(x_1,x_2)$ we denote by $\rot v=[\frac{\partial v}{\partial x_2}, -\frac{\partial v}{\partial x_1}]^\top$ the \emph{rotation} of $v$, and
we denote the elementwise application of the gradient and rotation by $\nabla_h$ and $\rot_h$, respectively, i.e., $(\nabla_h \cdot)|_T = \nabla(\cdot)|_T$ and $(\rot_h \cdot)|_T = \rot(\cdot)|_T$
for all $T\in\T$.
\par
Let $V_{hp}$ denote the discontinuous finite element space of piecewise polynomial basis functions
\begin{align*}
V_{hp} := \{ v_{hp}\in L^2(\Omega)\,:\, v_{hp}|_T \in \Pp{p_T}(T) \text{ for all } T\in \T \},
\end{align*}
where $\Pp{p_T}(T)$ denotes the space of polynomials of degree less
than or equal to $p_T\geq 1$ on  
a triangle $T\in\T$.
Let us denote by $\h$ and $\p$ the piecewise constant 
mesh size function and polynomial degree function, respectively, 
defined on the mesh interfaces as follows:
 $\h|_E = \min(h_{T_+},h_{T_-})$ and $\p|_E = \max(p_{T_+},p_{T_-})$,
 if $E=\partial T_+\cap \partial T_-$, or 
$\h|_E =h_{T}$ and $\p|_E =p_{T}$, if $E=\partial
T\cap\partial\Omega$.
\par
The discrete problem then reads: 
Find $u_{hp}\in V_{hp}$ such that
\begin{align}\label{discreteproblem}
	a_{hp}(u_{hp},v_{hp}) = F_{hp}(v_{hp})\quad\textrm{for all } v_{hp} \in V_{hp},
\end{align}
where
\begin{align*}
	a_{hp}(u,v)  &:= (\nabla_h u, \nabla_h v) - k^2(u,v)\\
		&\quad - \sum_{E\in\E(\Omega)}(\jump{u},\average{\nabla_h v})_{E} - \sum_{E\in\E(\Omega)}(\average{\nabla_h u},\jump{v})_{E}\\
		&\quad -\left( \gamma k\frac{\h}{\p} u, \nabla_h v\cdot\n\right)_{\partial\Omega} 
		-\left( \gamma k\frac{\h}{\p}\nabla_h u\cdot\n,v\right)_{\partial\Omega}\\	
		&\quad -i  
\sum_{E\in\E(\Omega)}\left( \beta\frac{\h}{\p}\jump{\nabla_hu},\jump{\nabla_h v} \right)_{E}
		            - i\sum_{E\in\E(\Omega)}\left( \alpha\frac{\p^2}{\h}\jump{u},\jump{v}\right)_{E}\\	
		&\quad -i 
\left( \gamma \frac{\h}{\p}\nabla_h u\cdot \n,\nabla_h v\cdot \n\right)_{\partial\Omega}
		            - i\left( k(1-\gamma k\frac{\h}{\p})u,v\right)_{\partial\Omega},
\end{align*}
and
\begin{align*}
	F_{hp}(v) := (f,v) -i \left( \frac{\gamma\h}{\p} g, \nabla_h v\cdot\n\right)_{\partial\Omega}
	            + \left( (1-\gamma k\frac{\h}{\p})g,v\right)_{\partial\Omega}.
\end{align*}
The constants $\alpha>0$, $\beta>0$, and $0<\gamma<1/3$ are fixed constants.
Note that $\beta>0$ guarantees the unconditional well posedness of the discrete problem; cf. \cite{MPS2013}.
\par
In order to define the DG gradient, see Definition~\ref{def:gradient} below,
we need to introduce two lifting operators. 
For any $E\in\E(\Omega)$, let
\[
 \Pp0(\T(E))^2:=\{v_{hp} \in [L^2(\omega_E)]^2\,:\, v_{hp}|_T \in [\Pp0(T)]^2 \text{ for all } T\in \T(E)\};
\]
then, we define $\mathcal{L}_E^0\in \Pp0(\T(E))^2$ as
\begin{align*}
	\int_{\omega_E} \mathcal{L}_E^0(\jump{v_{hp}})\cdot\overline{\bm{\tau}}_{hp}\, dx 
	= \int_E\jump{v_{hp}}\cdot\average{\overline{\bm{\tau}}_{hp}}\, ds 
\end{align*}
for all $\bm{\tau}_{hp}\in \Pp0(\T(E))^2$,
and $\mathcal{L}_E^1\in \Pp0(\T(E))^2$ as
\begin{align*}
	\int_{\omega_E} \mathcal{L}_E^1(\jump{\nabla_h v_{hp}})\cdot\overline{\bm{\tau}}_{hp}\, dx 
	= i 
\beta\int_E\frac{\h}{\p}
	\jump{\nabla_h v_{hp}}\jump{\overline{\bm{\tau}}_{hp}}\, ds 
\end{align*}
for all $\bm{\tau}_{hp}\in \Pp0(\T(E))^2$.
\par
For a given integer $p\geq 0$, let $\Pi_E^p: L^2(E) \to \Pp{p}(E)$ denote the local $L^2$-orthogonal
projection onto the space of polynomials of degree at most $p$
along the edge $E\in\E$. 
Similarly we define $\Pi_T^p: L^2(T) \to \Pp{p}(T)$ to be the local
$L^2$-orthogonal projection onto the space of polynomials
of degree at most $p$ on a triangle $T\in\T$.
\par
We can derive the following stability estimates following the lines of
the proof of \cite[Proposition 4.2]{PS2003}.
\begin{lemma}\label{lemma:stabilitylifting}
The lifting operators $\mathcal{L}_E^0$ and $\mathcal{L}_E^1$ are stable in the sense that
\begin{align*}
\|\mathcal{L}_E^0(\jump{v_{hp}})\|_{0,T} &\lesssim h_E^{-1/2}\|\Pi_E^0(\jump{v_{hp}})\|_{0,E},\\
\|\mathcal{L}_E^1(\jump{\nabla v_{hp}})\|_{0,T} &\lesssim
  \beta h_E^{1/2}\|\p^{-1}\Pi_E^0(\jump{\nabla v_{hp}})\|_{0,E},
\end{align*}
for $T=T_\pm$, where $T_\pm$ are the two elements sharing the edge $E$.
\end{lemma}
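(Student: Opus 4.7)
The plan is the classical test-with-the-lift argument combined with the fact that the range of each lifting operator consists of piecewise constant vector fields, so that trace/inverse bounds on $E$ versus $\omega_E$ become elementary scaling identities.

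First I would prove the estimate for $\mathcal{L}_E^0$. Since $\mathcal{L}_E^0(\jump{v_{hp}})\in\Pp0(\T(E))^2$ itself, it is an admissible test function in its own defining relation, which gives
\begin{equation*}
\|\mathcal{L}_E^0(\jump{v_{hp}})\|_{0,\omega_E}^2
= \int_E \jump{v_{hp}}\cdot\average{\overline{\mathcal{L}_E^0(\jump{v_{hp}})}}\,ds.
\end{equation*}
Because $\average{\mathcal{L}_E^0(\jump{v_{hp}})}$ is the average of two constants and hence constant (actually a polynomial of degree $0$) along $E$, the $L^2(E)$-orthogonality of $\Pi_E^0$ lets me replace $\jump{v_{hp}}$ on the right-hand side by $\Pi_E^0(\jump{v_{hp}})$ without changing the value of the integral. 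Cauchy--Schwarz on $E$ then yields
\begin{equation*}
\|\mathcal{L}_E^0(\jump{v_{hp}})\|_{0,\omega_E}^2
\le \|\Pi_E^0(\jump{v_{hp}})\|_{0,E}\,\|\average{\mathcal{L}_E^0(\jump{v_{hp}})}\|_{0,E}.
\end{equation*}
To conclude I would invoke the elementary scaling identity for a constant $c$ on a shape-regular triangle $T_\pm$ sharing $E$: $\|c\|_{0,E}=h_E^{1/2}|c|$ and $\|c\|_{0,T_\pm}\sim h_E\,|c|$, hence $\|c\|_{0,E}\lesssim h_E^{-1/2}\|c\|_{0,T_\pm}$. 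Applying this to each of the two constants that make up $\average{\mathcal{L}_E^0(\jump{v_{hp}})}$ and dividing through by $\|\mathcal{L}_E^0(\jump{v_{hp}})\|_{0,\omega_E}$ delivers the first estimate.

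For $\mathcal{L}_E^1$ I would repeat the argument verbatim, testing with $\bm{\tau}_{hp}=\mathcal{L}_E^1(\jump{\nabla_h v_{hp}})$. Now the quantity that meets $\jump{\nabla_h v_{hp}}$ on $E$ is $\jump{\mathcal{L}_E^1(\jump{\nabla_h v_{hp}})}$, which is again a single scalar constant on $E$, so the same $\Pi_E^0$-replacement applies. The presence of the weight $\h/\p$ is pulled out of the $L^2(E)$-pairing, and the triangle-to-edge scaling $\|\jump{\mathcal{L}_E^1}\|_{0,E}\lesssim h_E^{-1/2}\|\mathcal{L}_E^1\|_{0,\omega_E}$ combined with $\h|_E\lesssim h_E$ gives
\begin{equation*}
\|\mathcal{L}_E^1(\jump{\nabla_h v_{hp}})\|_{0,\omega_E}^2
\lesssim \beta\,h_E^{1/2}\,\|\p^{-1}\Pi_E^0(\jump{\nabla_h v_{hp}})\|_{0,E}\,\|\mathcal{L}_E^1(\jump{\nabla_h v_{hp}})\|_{0,\omega_E},
\end{equation*}
from which the claimed bound follows after division. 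Restricting from $\omega_E$ to either $T_\pm$ is trivial since the norm on a sub-domain is bounded by the norm on the whole patch.

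The only step that requires mild care is the second one: tracking that the test function $\bm{\tau}_{hp}$ ranges over \emph{all} of $\Pp0(\T(E))^2$ so that substituting the lifting back in is legal, and noting that the $\h$ and $\p$ factors in the definition of $\mathcal{L}_E^1$ are constants on $E$ and therefore can be pulled outside the pairing cleanly. There is no genuine obstacle; the proof is essentially a one-line duality argument enabled by the piecewise-constant nature of the range. This mirrors the reasoning of \cite[Proposition 4.2]{PS2003}, the only novelty being the extra $\beta\,\h/\p$ factor in $\mathcal{L}_E^1$, which is carried through the estimate unchanged.
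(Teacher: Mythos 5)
Your proof is correct and follows essentially the same route as the paper: both rest on the defining relation of the lifting, the $L^2(E)$-orthogonality that allows replacing the jump by its $\Pi_E^0$-projection, Cauchy--Schwarz on $E$, and the edge-to-element scaling identity for piecewise constants. The only cosmetic difference is that the paper expresses $\|\mathcal{L}_E^0(\jump{v_{hp}})\|_{0,\omega_E}$ as a supremum over unit test functions in $\Pp0(\T(E))^2$, whereas you test directly with the lifting itself and divide, which is the same duality argument.
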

\begin{proof}
For any $\bm{\tau}_{hp}\in \Pp0(\T(E))^2$, we have
that $h_E^{-1}\|\bm{\tau}_{hp}\|_{0,E}^2 =
|T|^{-1}\|\bm{\tau}_{hp}\|_{0,T}^2$, $T=T_\pm$.
Hence,
\begin{align*}
\|\mathcal{L}_E^0(\jump{v_{hp}})\|_{0,T}
&\le \|\mathcal{L}_E^0(\jump{v_{hp}})\|_{0,\omega_E}\\
&= \sup_{\bm{\tau}_{hp}\in \Pp0(\T(E))^2, \,\| \bm{\tau}_{hp} \|_{0,\omega_E}=1} \int_{\omega_E} \mathcal{L}_E^0(\jump{v_{hp}})\cdot\overline{\bm{\tau}}_{hp}\, dx\\
&= \sup_{\bm{\tau}_{hp}\in \Pp0(\T(E))^2, \,\| \bm{\tau}_{hp} \|_{0,\omega_E}=1} \int_E\Pi^0_E(\jump{v_{hp}})\cdot\average{\overline{\bm{\tau}}_{hp}}\, ds\\
&\leq C h_E^{-1/2}\|\Pi^0_E(\jump{v_{hp}})\|_{0,E},
\end{align*}
where $C = h_E\max\{|T_+|^{-1/2},|T_-|^{-1/2}\}$ is bounded by 
shape regularity.
The second bound follows similarly.

\end{proof}
\begin{definition}[DG gradient]\label{def:gradient}
We define the DG gradient by
\begin{align}\label{eq:gradient}
 \G(u_{hp}) := \nabla_h u_{hp} - \sum_{E\in\E(\Omega)} \mathcal{L}_E^0(\jump{u_{hp}}) - \sum_{E\in\E(\Omega)} \mathcal{L}_E^1(\jump{\nabla u_{hp}}).
\end{align}
\end{definition}

\begin{remark}
The lifting operators $\mathcal{L}_E^0$ arise already in \cite{EV2015}
for the DG discretization of the Poisson problem;
whereas the lifting operators $\mathcal{L}_E^1$ are required due
to the additional gradient stabilization terms in the formulation \ref{discreteproblem}.
\end{remark}%

%%%%%%%%%%%%
%% Section: A posteriori error estimator
%%%%%%%%%%%%%%
\section{A posteriori error estimator and reliability}\label{sec:aposteriori}
In this section, we derive an equilibrated a posteriori error
estimator based on a shifted Poisson problem
and prove that it is reliable, up to additional $L^2$ and boundary errors.
The definition of this estimator involves flux and potential reconstructions, which will be defined below.
This approach is also related to the a posteriori error analysis
for the eigenvalue problem via equilibrated fluxes developed in \cite{CDMSV2017,CDMSV2018}.
\par

For simplicity of the presentation of the equilibrated flux technique, we
restrict ourselves to conforming meshes with no hanging nodes. For the
necessary modifications
to handle irregular
meshes we refer the reader to \cite{DEV2016}.
\par

We approach the a posteriori error estimation of the DG finite element approximation of the Helmholtz problem
by considering the following (shifted) Poisson problem with Neumann boundary conditions:
Find a (complex) function $w\in H^2(\Omega)$ such that
\begin{equation}\label{shifted_problem}
\begin{aligned}
	-\Delta w &= f + k^2 u_{hp} &&\textrm{in }\Omega,\\
	\nabla w \cdot \n &= g + iku_{hp} - \gamma k \frac{\h}{\p}(g - \nabla_h u_{hp} \cdot \n + iku_{hp})&&\textrm{on }\partial\Omega.
\end{aligned}
\end{equation}
Note that the boundary condition is chosen in such a way that the compatibility condition for the
pure Neumann problem is satisfied due to \eqref{discreteproblem}.
\par

\begin{definition}[Flux reconstruction]\label{flux_recon}
For a given $u_{hp}\in V_{hp}$, we define an \emph{equilibrated flux reconstruction} for $u_{hp}$ as
any function $\bm{\sigma}_{hp}\in H(\ddiv;\Omega)$ which satisfies
\begin{equation}\label{eq:fluxreconstruction}
\begin{aligned}
	\int_T\ddiv\bm{\sigma}_{hp} \, dx &= \int_T f + k^2 u_{hp} \, dx && \forall T\in\T,\\
	\int_E \bm{\sigma}_{hp}\cdot\bm{n} \, ds &= \int_E -(g+iku_{hp}) + \gamma k \frac{\h}{\p}(g - \nabla_h u_{hp} \cdot\n - iku_{hp}) \, ds
	&& \forall E\in\E(\partial\Omega).
\end{aligned}
\end{equation}
\end{definition}

Since the compatibility condition of the pure Neumann problem is satisfied,
the existence of such a function follows from the mixed
theory applied to the homogeneous Neumann problem in $H_0(\ddiv;\Omega)\times L^2_0(\Omega)$, where
\begin{align*}
H_0(\ddiv;\Omega) &:= \{ \tau\in H(\ddiv;\Omega)\,: \, \tau\cdot \bm{n} = 0\;\mbox{on}\;\partial\Omega\},\\
L^2_0(\Omega) &:= \left\{ v\in L^2(\Omega)\, : \, \int_\Omega v\, dx = 0\right\}. 
\end{align*}
By proceeding as in the Dirichlet case \cite[Example 4.2.1]{BBF2013},
this relies on the surjectivity of $\ddiv:H_0(\ddiv;\Omega)\to L^2_0(\Omega)$
(see, e.g., \cite[Equation (4.2.62)]{BBF2013}).

We point out that $\bm{\sigma}_{hp}$ is not necessarily a piecewise polynomial
  function; the subscript $hp$ simply indicates that it is associated
  with a piecewise polynomial
  function (namely $u_{hp}$).
\par

\begin{definition}[Potential]\label{potential_recon}
We define a \emph{potential} as any function
\begin{align*}
	s_{hp}\in H^1_*(\Omega):= \{v\in H^1(\Omega)\,:\, (v,1)=0\}.
\end{align*}
\end{definition}
As for $\bm{\sigma}_{hp}$, the subscript $hp$ indicates that $s_{hp}$ will be constructed from $u_{hp}$; see Section~\ref{sec:potential} below. For this reason, we will call $s_{hp}$ a \emph{potential reconstruction} for $u_{hp}$.
\par

For the proof of reliability of the error estimator (see \eqref{eq:errorestimator} below), the following
Poincar\'e and trace estimates, with explicit constants for triangles, are required.
\begin{lemma}[Poincar\'e inequality on triangles \cite{LS2010}]
For any $v\in H^1(T)$, where $T$ is  
a triangle, it holds that
\begin{align}\label{Poincare}
\| v - \Pi^0_T v\|_{0,T} \leq \frac{h_T}{j_{1,1}}\|\nabla v\|_{0,T},
\end{align}
where $j_{1,1}\approx 3.83170597020751$ denotes the first positive root of the Bessel function of the first kind. \qed
\end{lemma}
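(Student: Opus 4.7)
The plan is to recast the inequality as a sharp spectral estimate for the Neumann Laplacian on $T$. Setting $w := v - \Pi_T^0 v$, we have $\int_T w\, dx = 0$ and $\nabla w = \nabla v$, so the stated bound is equivalent to showing
\begin{align*}
\inf \left\{ \frac{\|\nabla w\|_{0,T}^2}{\|w\|_{0,T}^2} \;:\; w \in H^1(T),\, w\ne 0,\, \int_T w\, dx = 0 \right\} \;\geq\; \frac{j_{1,1}^2}{h_T^2}.
\end{align*}
By the Rayleigh--Ritz variational principle, the infimum on the left-hand side coincides with the first positive Neumann eigenvalue $\mu_2(T)$ of $-\Delta$ on $T$. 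The problem therefore reduces to proving the geometric eigenvalue lower bound $\mu_2(T) \geq j_{1,1}^2/h_T^2$.

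To establish this bound, I would follow the strategy of Laugesen and Siudeja. The idea is to compare the triangle with a reference domain whose Neumann spectrum is known explicitly in terms of zeros of Bessel functions, and to transfer the bound via a convex-geometric rearrangement argument. Concretely, one orients $T$ so that a segment realizing its diameter aligns with a preferred axis; extends a mean-zero test function by reflection to exploit symmetry; uses the convexity of $T$ to estimate the full Rayleigh quotient from below by a one-dimensional Rayleigh quotient along the diameter direction, in the spirit of Payne--Weinberger; and then identifies the resulting one-dimensional Sturm--Liouville problem as one whose principal eigenvalue is precisely $j_{1,1}^2/h_T^2$.

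The main obstacle is the sharpness of the constant. Since $j_{1,1} \approx 3.83$ exceeds $\pi$, the classical Payne--Weinberger bound $h_T/\pi$ is not strong enough, and any rearrangement must exploit specific features of triangular geometry to avoid losing a constant factor. Identifying the extremal (degenerate) triangles that saturate the bound, and showing that all non-degenerate triangles satisfy a strictly better estimate, is the delicate geometric core of the argument carried out in~\cite{LS2010}; absent that analysis one would at best recover the weaker Payne--Weinberger constant.
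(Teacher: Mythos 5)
Your reduction of the inequality to the sharp lower bound $\mu_2(T)\,h_T^2 \geq j_{1,1}^2$ for the first nonzero Neumann eigenvalue is correct, and your deferral of the geometric core to Laugesen--Siudeja is exactly what the paper does: the lemma is stated with no proof, simply importing the optimal Poincar\'e inequality from \cite{LS2010}. Your sketch of their argument (comparison with a Bessel-type reference problem, with degenerate triangles as the extremal case) is a fair, if approximate, description of that reference, and you rightly note that without it one only recovers the weaker Payne--Weinberger constant $\pi$.
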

\begin{lemma}\label{lem:tracebound}
For any $v\in H^1(T)$, where $T$ is  
a triangle, we have the following trace estimate
for any edge $E$ of T, 
\begin{align*}
h_E^{-1/2}\| v - \Pi^0_T v\|_{0,E} \leq C_{tr} \|\nabla v\|_{0,T},
\end{align*}
where $C_{tr}^2 = (j_{1,1}^{-1}+j_{1,1}^{-2})\frac{h_T^2}{|T|}\leq 0.3291 \frac{h_T^2}{|T|}$.
\end{lemma}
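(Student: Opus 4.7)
The plan is to reduce the edge norm to a volume norm via an auxiliary vector field and the divergence theorem, and then invoke the Poincaré inequality \eqref{Poincare} already stated.

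First, I would construct a convenient vector field. Let $x^*$ denote the vertex of $T$ opposite to $E$, and set
\[
\bm{\psi}(x) := \frac{|E|}{2|T|}(x - x^*).
\]
A direct check shows that $\bm{\psi}\cdot\bm{n} = 1$ on $E$ (since the distance from $x^*$ to the line containing $E$ is $2|T|/|E|$), while $\bm{\psi}\cdot\bm{n} = 0$ on the two other edges of $T$ (because $x^*$ lies on each of them, so $x - x^*$ is tangential). Moreover, $\ddiv \bm{\psi} = |E|/|T|$ and the pointwise bound $|\bm{\psi}(x)| \le \frac{|E|}{2|T|}h_T$ holds for all $x\in T$.

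Next, I would write $\|v - \Pi^0_T v\|_{0,E}^2 = \int_{\partial T}(v - \Pi^0_T v)^2\,\bm{\psi}\cdot\bm{n}\,ds$ (using the vanishing of $\bm{\psi}\cdot\bm{n}$ on the other edges), apply the divergence theorem, and obtain
\[
\|v - \Pi^0_T v\|_{0,E}^2 = \frac{|E|}{|T|}\,\|v - \Pi^0_T v\|_{0,T}^2 + 2\,\Re\!\int_T (v - \Pi^0_T v)\,\nabla v \cdot \bm{\psi}\,dx.
\]
A Cauchy--Schwarz estimate together with the $L^\infty$ bound on $\bm{\psi}$ yields
\[
\|v - \Pi^0_T v\|_{0,E}^2 \le \frac{|E|}{|T|}\|v - \Pi^0_T v\|_{0,T}^2 + \frac{|E|\,h_T}{|T|}\,\|v - \Pi^0_T v\|_{0,T}\,\|\nabla v\|_{0,T}.
\]

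The last step is to absorb both summands into $\|\nabla v\|_{0,T}$ by applying the Poincar\'e inequality \eqref{Poincare} once (in the first term) and once (in the second term). This gives
\[
\|v - \Pi^0_T v\|_{0,E}^2 \le \frac{|E|\,h_T^2}{|T|}\Bigl(\tfrac{1}{j_{1,1}^2} + \tfrac{1}{j_{1,1}}\Bigr)\|\nabla v\|_{0,T}^2,
\]
and dividing by $|E|=h_E$ produces the claimed constant $C_{tr}^2 = (j_{1,1}^{-1} + j_{1,1}^{-2})\,h_T^2/|T|$. The numerical upper bound follows from $j_{1,1}\approx 3.8317$.

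I do not expect any serious obstacle: the construction of $\bm{\psi}$ for a triangle is elementary, and the rest is an application of the divergence theorem plus the previous Poincar\'e lemma. The only delicate point is making sure the constant appearing at the end matches the stated explicit value, which is why one must use the sharp pointwise bound $|\bm\psi|\le \frac{|E| h_T}{2|T|}$ rather than a cruder one, and apply Poincar\'e in exactly the right way so that $1/j_{1,1}$ (not $1/j_{1,1}^2$) appears in the cross-term.
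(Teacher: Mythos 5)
Your proof is correct and follows essentially the same route as the paper: the intermediate inequality you derive via the vector field $\bm{\psi}$ and the divergence theorem is exactly the trace identity that the paper cites from \cite[Lemma 2.1]{CGR2012}, and the final absorption step via the Poincar\'e inequality \eqref{Poincare} is identical, yielding the same constant $(j_{1,1}^{-1}+j_{1,1}^{-2})h_T^2/|T|$. The only difference is that you re-derive the cited identity from scratch (for complex-valued $v$ you should write $|v-\Pi^0_Tv|^2$ and $2\,\Re\int_T\overline{(v-\Pi^0_Tv)}\,\nabla v\cdot\bm{\psi}\,dx$, but this is cosmetic).
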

\begin{proof}
The trace identity of \cite[Lemma 2.1]{CGR2012} leads to the inequality
\begin{align*}
h_E^{-1}\| v - \Pi^0_Tv\|_{0,E}^2 \leq \frac{h_T}{|T|}\|v - \Pi^0_Tv\|_{0,T} \|\nabla v\|_{0,T} + \frac{1}{|T|}\|v - \Pi^0_Tv\|_{0,T}^2.
\end{align*}
This, together with 
the Poincar\'e inequality \eqref{Poincare}, yields
\begin{align*}
h_E^{-1}\| v - \Pi^0_Tv\|_{0,E}^2 \leq \frac{h_T^2}{j_{1,1}|T|}\|\nabla v\|_{0,T}^2 + \frac{h_T^2}{j_{1,1}^2|T|}\|\nabla v\|_{0,T}^2.
\end{align*}
\end{proof}

\begin{remark}
For the adaptive meshes used in
Section~\ref{sec:numerics}, 
which consist only of right-angled 
triangles, it holds that $h_T^2/|T| = 4$ and, therefore, $C_{tr} \leq 1.14733$.
\end{remark}

We can now define the following error estimator:
\begin{equation}\label{eq:errorestimator}
\begin{split}
\eta_{hp}^2 &:= 
\sum_{T\in\T} \left(
 \|\G(u_{hp})+\bm{\sigma}_{hp}\|_{0,T} 
+ \frac{h_T}{j_{1,1}}\|f + k^2 u_{hp} -\ddiv\bm{\sigma}_{hp}\|_{0,T} \right.\\
&\quad
+ \left.C_{tr}\!\!\!\!\!\sum_{E\in\E(T)\cap \E(\partial\Omega)}\!\!\!\!\! h_E^{1/2}\|\bm{\sigma}_{hp}\cdot\bm{n} + g + iku_{hp} - \gamma k \frac{\h}{\p}(g - \nabla_h u_{hp} \cdot \n + iku_{hp})\|_{0,E}\right)^2\\
&\quad
+ \sum_{T\in\T}\|\G(u_{hp})-\nabla s_{hp}\|_{0,T}^2,
\end{split}
\end{equation}%
where $\bm{\sigma}_{hp}\in H(\ddiv;\Omega)$ is an equilibrated flux reconstruction
of $u_{hp}$ as in Definition~\ref{flux_recon}, 
and $s_{hp}\in H^1_*(\Omega)$ is a potential as in Definition~\ref{potential_recon}.
In the following theorem, we prove reliability of the estimator defined in \eqref{eq:errorestimator},
up to additional $L^2$ and boundary errors.
Notice that we are still in the abstract setting, where $\bm{\sigma}_{hp}$ and $s_{hp}$
are any admissible flux and potential reconstructions, according to Definitions \ref{flux_recon} and \ref{potential_recon} respectively. A specific choice of $\bm{\sigma}_{hp}$ and $s_{hp}$,
for which efficiency can also be proven, will be given in Section~\ref{sec:efficiency} below.

\begin{theorem}[Reliability]\label{th:reliability}
Let $u\in H^1(\Omega)$ be the weak solution of the Helmholtz problem \eqref{weak:formulation}, 
and $u_{hp}\in V_{hp}$ be the discrete solution of \eqref{discreteproblem}.
Then, for the
error estimator defined in~\eqref{eq:errorestimator}, we have that
\begin{equation}\label{eq:reliability}
\begin{split}
\|\nabla u-\G(u_{hp})\|_{0,\Omega} &\lesssim \eta_{hp} + k^2\|u-u_{hp}\|_{0,\Omega} + k\|u-u_{hp}\|_{0,\partial\Omega} \\
&\quad + \|\gamma k \frac{\h}{\p}(g - \nabla_h u_{hp}\cdot\n + iku_{hp})\|_{0,\partial\Omega}.
\end{split}
\end{equation}
\end{theorem}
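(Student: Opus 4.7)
The plan is to decompose the error via the triangle inequality into a conforming and a nonconforming part,
\begin{equation*}
\|\nabla u-\G(u_{hp})\|_{0,\Omega}\le \|\nabla(u-s_{hp})\|_{0,\Omega}+\|\nabla s_{hp}-\G(u_{hp})\|_{0,\Omega}.
\end{equation*}
The second summand is already the potential-reconstruction contribution to $\eta_{hp}$. For the first, I would exploit the standard $H^1_*(\Omega)$-duality
\begin{equation*}
\|\nabla(u-s_{hp})\|_{0,\Omega}=\sup\bigl\{\,|(\nabla(u-s_{hp}),\nabla v)|\;:\;v\in H^1_*(\Omega),\ \|\nabla v\|_{0,\Omega}=1\,\bigr\},
\end{equation*}
and estimate the duality pairing for an arbitrary such test function $v$.

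Substituting the weak formulation \eqref{weak:formulation} in for $(\nabla u,\nabla v)$, inserting $\pm \G(u_{hp})\pm\bm{\sigma}_{hp}$, and integrating by parts using $\bm{\sigma}_{hp}\in H(\ddiv;\Omega)$ rewrites this pairing as
\begin{equation*}
\begin{aligned}
(\nabla(u-s_{hp}),\nabla v)=\;&(f+k^2u-\ddiv\bm{\sigma}_{hp},v)+(g+iku+\bm{\sigma}_{hp}\cdot\n,v)_{\partial\Omega}\\
&-(\G(u_{hp})+\bm{\sigma}_{hp},\nabla v)-(\nabla s_{hp}-\G(u_{hp}),\nabla v).
\end{aligned}
\end{equation*}
The two inner products in the second line are bounded by Cauchy--Schwarz against matching terms of $\eta_{hp}$. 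In the volume term I would split $k^2u=k^2u_{hp}+k^2(u-u_{hp})$: the piece $(f+k^2u_{hp}-\ddiv\bm{\sigma}_{hp},v)$ has elementwise zero mean by the volume equilibration in Definition~\ref{flux_recon}, so subtracting $\Pi_T^0 v$ and applying the Poincar\'e inequality \eqref{Poincare} reproduces the $\tfrac{h_T}{j_{1,1}}$-weighted component of $\eta_{hp}$, whereas the leftover $k^2(u-u_{hp},v)$ is controlled by Cauchy--Schwarz and a global Poincar\'e estimate on $H^1_*(\Omega)$ to supply the $k^2\|u-u_{hp}\|_{0,\Omega}$ correction in \eqref{eq:reliability}.

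The boundary term is handled in the same spirit: I would add and subtract $iku_{hp}$ and $\gamma k\tfrac{\h}{\p}(g-\nabla_h u_{hp}\cdot\n+iku_{hp})$ in order to isolate the face residual
\begin{equation*}
R_\partial:=\bm{\sigma}_{hp}\cdot\n+g+iku_{hp}-\gamma k\tfrac{\h}{\p}(g-\nabla_h u_{hp}\cdot\n+iku_{hp}),
\end{equation*}
which by the edge equilibration in Definition~\ref{flux_recon} has vanishing mean on each $E\in\E(\partial\Omega)$; subtracting $\Pi_T^0 v$ on the edge and invoking Lemma~\ref{lem:tracebound} then reproduces the $C_{tr}h_E^{1/2}$-weighted face contribution of $\eta_{hp}$. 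The two leftovers $ik(u-u_{hp},v)_{\partial\Omega}$ and $(\gamma k\tfrac{\h}{\p}(g-\nabla_h u_{hp}\cdot\n+iku_{hp}),v)_{\partial\Omega}$ are bounded by Cauchy--Schwarz combined with a continuous trace inequality on $v\in H^1_*(\Omega)$, producing the remaining two correction terms of \eqref{eq:reliability} with constants depending only on $\Omega$. I expect the main obstacle to be the careful sign bookkeeping that makes the boundary equilibration of Definition~\ref{flux_recon} match the precise form of $R_\partial$ appearing inside $\eta_{hp}$, together with the verification that every constant absorbed into $\lesssim$ (in particular the global Poincar\'e and continuous trace constants on $H^1_*(\Omega)$) depends on $\Omega$ and on the shape regularity alone, never on $k$, $\h$, or $\p$.
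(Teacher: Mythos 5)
Your proposal is correct and follows essentially the same route as the paper: the only difference is that you split the error by the triangle inequality with $s_{hp}$ directly, whereas the paper first projects $\G(u_{hp})$ onto gradients of $H^1_*(\Omega)$ to obtain a Pythagorean decomposition and then invokes the best-approximation property of that projection, which spares the extra term $(\G(u_{hp})-\nabla s_{hp},\nabla v)$ in the duality pairing; both variants give \eqref{eq:reliability} up to the constant hidden in $\lesssim$. The remaining steps --- duality over $H^1_*(\Omega)$, insertion of $\bm{\sigma}_{hp}$, integration by parts against the weak form, elementwise and edgewise equilibration combined with \eqref{Poincare} and Lemma~\ref{lem:tracebound}, and absorption of the $k^2$-, $ik$- and $\gamma k\h/\p$-remainders via global Poincar\'e and trace constants depending only on $\Omega$ --- coincide with the paper's proof.
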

\begin{proof}
We follow the general ideas of the proof of \cite[Theorem 3.3]{EV2015}.
However, in order to connect with the shifted Poisson problem \eqref{shifted_problem},
some extra terms need to be bounded.
We repeat the full proof for completeness.
\par
Let $s\in H^1_*(\Omega)$ be defined by the projection
\begin{align}\label{eq:orth}
	(\nabla s,\nabla v) = (\G(u_{hp}),\nabla v)\quad\textrm{for all } v\in H^1(\Omega).
\end{align}
Then, by orthogonality, we have that
\begin{align}\label{eq:rel:1}
	\|\nabla u - \G(u_{hp})\|^2_{0,\Omega} = \|\nabla(u-s)\|^2_{0,\Omega} + \|\nabla s-\G(u_{hp})\|^2_{0,\Omega}.
\end{align}
\par
Since $s\in H^1_*(\Omega)$ is the orthogonal projection, we have
\[
\|\nabla s-\G(u_{hp})\|_{0,\Omega} = \min_{v\in H^1_*(\Omega)} \|\nabla v-\G(u_{hp})\|_{0,\Omega}.
\]
Hence, for any     $s_{hp}\in H^1_*(\Omega)$, we get the following
bound for the second term in \eqref{eq:rel:1}
\begin{align}\label{eq:rel:1bis}
\|\nabla s-\G(u_{hp})\|^2_{0,\Omega} \leq \|\nabla s_{hp} -\G(u_{hp})\|^2_{0,\Omega}.
\end{align}
\par
The first term of \eqref{eq:rel:1} is estimated by the flux reconstruction as follows.
We have
\begin{align*}
\|\nabla(u-s)\|_{0,\Omega}
&= \sup_{v\in H^1_*(\Omega),\, \|\nabla v\|_{0,\Omega}=1} (\nabla(u-s),\nabla v)\\
&= \sup_{v\in H^1_*(\Omega),\, \|\nabla v\|_{0,\Omega}=1} (\nabla u - \G(u_{hp}),\nabla v),
\end{align*}
where the second identity follows from \eqref{eq:orth}.
Adding and subtracting an equilibrated  
flux reconstruction $\bm{\sigma}_{hp}\in H(\ddiv;\Omega)$ leads to
\begin{align}\label{eq:rel:2}
(\nabla u - \G(u_{hp}),\nabla v)
&= (\nabla u + \bm{\sigma}_{hp},\nabla v) - (\G(u_{hp}) + \bm{\sigma}_{hp},\nabla v).
\end{align}
Using the weak formulation \eqref{weak:formulation} and 
integrating by parts in the first term on the right-hand side of \eqref{eq:rel:2} yields, for any $v\in H^1_*(\Omega)$ with $\|\nabla v\|_{0,\Omega}=1$,
\begin{align}\label{eq:rel:3}
\begin{split}
(\nabla u + \bm{\sigma}_{hp},\nabla v) 
&=(\nabla u,\nabla v) + (\bm{\sigma}_{hp},\nabla v)\\
&= (f+k^2 u - \ddiv\bm{\sigma}_{hp}, v) + ( g+iku + \bm{\sigma}_{hp}\cdot \bm{n}, v)_{\partial\Omega}\\
&= (f+k^2 u_{hp} - \ddiv\bm{\sigma}_{hp}, v) + ( g+iku_{hp} + \bm{\sigma}_{hp}\cdot \bm{n} , v)_{\partial\Omega}\\
&\quad+k^2(u-u_{hp},v) +ik(u-u_{hp},v)_{\partial\Omega}.
\end{split}
\end{align}
Here, the proof differs from that of \cite[Theorem 3.3]{EV2015}, in that we 
introduce the last two extra terms.
From Definition~\ref{flux_recon} of the equilibrated
flux reconstruction $\bm{\sigma}_{hp}$ we get for the first term on the right-hand side of \eqref{eq:rel:3}, for each element $T\in\T$ that
\begin{align*}
(f+k^2 u_{hp} - \ddiv\bm{\sigma}_{hp}, v)_T
&= (f+k^2 u_{hp} - \ddiv\bm{\sigma}_{hp}, v-\Pi^0_Tv)_T\\
&\leq \|f+k^2 u_{hp} - \ddiv\bm{\sigma}_{hp}\|_{0,T} \|v-\Pi^0_Tv\|_{0,T}\\
&\leq \frac{h_T}{j_{1,1}}\|f+k^2 u_{hp} - \ddiv\bm{\sigma}_{hp}\|_{0,T} \|\nabla v\|_{0,T},
\end{align*}
where in the last step we have used the bound~\eqref{Poincare}.
For the second term on the right-hand side of
\eqref{eq:rel:3}, we write
\begin{align*}
&( g+iku_{hp} + \bm{\sigma}_{hp}\cdot \bm{n}  , v)_{\partial\Omega}\\
&\quad= ( \bm{\sigma}_{hp}\cdot \bm{n} + g+iku_{hp}  - \gamma k \frac{\h}{\p}(g - \nabla_h u_{hp}\cdot\n - iku_{hp}), v)_{\partial\Omega}\\
&\qquad  +(\gamma k \frac{\h}{\p}(g - \nabla_h u_{hp}\cdot\n - iku_{hp}), v)_{\partial\Omega},
\end{align*}
where we need to introduce the last term in order to connect to the shifted Poisson problem \eqref{shifted_problem}.
Again, from the definition of $\bm{\sigma}_{hp}$, for any
boundary edge $E$ belonging to the triangle $T$, we have that
\begin{align*}
&(\bm{\sigma}_{hp}\cdot \bm{n} + g+iku_{hp}  - \gamma k \frac{\h}{\p}(g - \nabla_h u_{hp}\cdot\n - iku_{hp}),v)_E\\
&\qquad= (\bm{\sigma}_{hp}\cdot \bm{n} + g+iku_{hp}  - \gamma k \frac{\h}{\p}(g - \nabla_h u_{hp}\cdot\n - iku_{hp}),v-\Pi^0_Tv)_E\\
&\qquad\leq \|\bm{\sigma}_{hp}\cdot \bm{n} + g+iku_{hp}  - \gamma k \frac{\h}{\p}(g - \nabla_h u_{hp}\cdot\n - iku_{hp})\|_{0,E} \|v-\Pi^0_Tv\|_{0,E}\\
&\qquad\leq C_{tr}h_E^{1/2} \|\bm{\sigma}_{hp}\cdot \bm{n} + g+iku_{hp}  - \gamma k \frac{\h}{\p}(g - \nabla_h u_{hp}\cdot\n - iku_{hp})\|_{0,E}
\|\nabla v\|_{0,T},
\end{align*}
where in the last step we have used the bound from
Lemma~\ref{lem:tracebound}.
\par
From the Cauchy-Schwarz inequality, the above estimates, the Poincar\'e and trace estimates, 
and
\begin{align*}
(\G(u_{hp}) + \bm{\sigma}_{hp},\nabla v)_T \leq \|\G(u_{hp}) + \bm{\sigma}_{hp}\|_{0,T} \|\nabla v\|_{0,T},
\end{align*}
noting that $\|\nabla v\|_{0,\Omega} = 1$, we deduce the bound
\begin{align*}
(\nabla u - \G(u_{hp}),\nabla v)
&\lesssim \eta_{hp} + k^2\|u-u_{hp}\|_{0,\Omega}+ k\| u-u_{hp}\|_{0,\partial\Omega}\\
&\quad+ \|\gamma k \frac{\h}{\p}(g - \nabla_h u_{hp}\cdot\n - iku_{hp})\|_{0,\partial\Omega},
\end{align*}
for \eqref{eq:rel:2}. Then, inserting this bound and \eqref{eq:rel:1bis} into \eqref{eq:rel:1} completes the proof.
\end{proof}

\begin{remark}
Assuming that the resolution conditions established
in~\cite{SZ2015} are satisfied, and that an appropriate mesh refinement near the domain corners is applied,
the $L^2$ error terms appearing on the right-hand side of the reliability bound
\eqref{eq:reliability} in Theorem~\ref{th:reliability}
are actually higher-order terms, compared to the left-hand side.
\end{remark}

%%%%%%%%%%%%%%%%%%%%%%%%%%%%%%%%%%%%%
% Local problems
%%%%%%%%%%%%%%%%%%%%%%%%%%%%%%%%%%%%% 
\section{Efficiency of the error estimator}\label{sec:efficiency}
The result in the previous section holds for any equilibrated flux and potential reconstructions;
cf., Definitions~\ref{flux_recon} \&~\ref{potential_recon}, respectively.
In this section, we {\em locally} define equilibrated flux and potential reconstructions,
for which we can show that the error estimator \eqref{eq:errorestimator} is efficient.
\par
In Section~\ref{sec:equilibrated_flux_reconstruction}, we start by constructing equilibrated fluxes on nodal patches by
solving local mixed problems with Raviart-Thomas finite elements.
We then show that the sum of these local fluxes satisfy the definition of an admissible
flux reconstruction (see Definition~\ref{flux_recon}).
We prove efficiency of this flux reconstruction in Theorem~\ref{thm:flux_efficiency} below.
The technique of this proof involves two steps:
an estimate for the strong residual and the $p$-robustness of the mixed approximation
(see Lemmas \ref{lemma:efficiency} and \ref{lemma:stability}, respectively).
\par
In Section~\ref{sec:potential}, we construct local potentials by solving minimization
problems again on nodal patches.
We reformulate these minimization problems as coercive variational problems,
which can only be done in two dimensions.
By combining these local potentials, we construct an admissible potential reconstruction,
according to Definition~\ref{potential_recon}, and prove its efficiency
(see Theorem~\ref{thm:potential_efficiency}).
\par
In Section~\ref{sec:efficiency_summary}, we show efficiency of the remaining data terms
and state the final efficiency result.

\subsection{Localized equilibrated flux reconstruction}\label{sec:equilibrated_flux_reconstruction}
We first define a computable equilibrated flux reconstruction $\bm{\sigma}_{hp}$, such that the terms in the
error estimator \eqref{eq:errorestimator} containing this reconstruction are efficient.

Using the partition of unity property of the linear hat-functions,
we can localize the construction of $\bm{\sigma}_{hp}$ on nodal patches $\omega_z$
by solving local patch problems in mixed formulation. For a given node $z\in\N$, with given integer $p_z\geq 1$, we define the space
\begin{align*}
	\Sigma_{hp}(\omega_z) := \{ \bm{\tau}_{hp} \in H(\ddiv,\omega_z)\, : \, \bm{\tau}_{hp}|_T \in RT_{p_z}(T) \text{ for all } T\in\T(z) \}
\end{align*}
of Raviart-Thomas finite elements $RT_{p_z}(T) := \left\{[\Pp{p_z}(T)]^2 + \widetilde{\mathbb{P}}_{p_z}(T)[x_1,x_2]^t\right\}$,
where $\widetilde{\mathbb{P}}_{p_z}(T)$ is the space of homogeneous polynomials
of degree $p_z$, and the space
\[
Q_{hp}(\omega_z) = \{q_{hp}\in L^2(\omega_z) : q_{hp}|_T\in \Pp{p_z}(T) \text{ for all } T\in\T(z) \}.
\]

Let $\psi_z\in H^1(\Omega)$ denote the piecewise linear hat function for the vertex $z\in \N$ with patch $\omega_z$.
Inserting $\psi_z$ as test functions into the discrete weak formulation 
\eqref{discreteproblem}, we get, via straightforward calculations, the following
hat function orthogonality,
\begin{align}\label{eq:orthogonality}
\begin{split}
&( \G(u_{hp}) , \nabla \psi_z)_{\omega_z}
-(f+k^2u_{hp}, \psi_z)_{\omega_z} \\
&\qquad= 
\left( g+iku_{hp},\psi_z\right)_{\partial\omega_z\cap\partial\Omega}
- \left( \gamma k\frac{\h}{\p}(g-\nabla_h u_{hp}\cdot\n+iku_{hp}),\psi_z\right)_{\partial\omega_z\cap\partial\Omega}\\
&\qquad\quad
-i\gamma\frac{\h}{\p}\left(g-\nabla_h u_{hp}\cdot\n+iku_{hp},\nabla_h\psi_z\cdot\n \right)_{\partial\omega_z\cap\partial\Omega}.
\end{split}
\end{align}
\par
Define for  $z\in\N$, and a given function $g^z\in L^2(\partial\omega_z\cap\partial\Omega)$, the local mixed finite element spaces
\begin{align*}
        \Sigma_{g^z,hp}^z &:= \{ \bm{\tau}_{hp}\in \Sigma_{hp}(\omega_z)\, : \, \bm{\tau}_{hp}\cdot\bm{n} = 0\text{ on }\partial\omega_z\backslash\partial\Omega,\\
         &\qquad\qquad\qquad\qquad\quad\;\,\bm{\tau}_{hp}\cdot\bm{n}|_{E} = \Pi^{p_z}_{E}g^z\text{ for all }E\subset\partial\omega_z\cap\partial\Omega\},\\
	Q_{hp}^z &:= \{ q_{hp}\in Q_{hp}(\omega_z)\,:\, (q_{hp},1)_{\omega_z} = 0\}.  
\end{align*}
\par
For each node $z\in \N$, we solve the following local problem in mixed form:
Find an approximation $(\bm{\zeta}_{hp}^z, r_{hp}^z)\in \Sigma_{g^z,hp}^z\times Q_{hp}^z$ such that
\begin{equation}\label{local_mixed}
\begin{aligned}
(\bm{\zeta}_{hp}^z, \bm{\tau}_{hp})_{\omega_z} - (r_{hp}^z, \ddiv \bm{\tau}_{hp})_{\omega_z} &= -(\psi_z \G(u_{hp}),\bm{\tau}_{hp})_{\omega_z} &&\textrm{for all }\bm{\tau}_{hp}\in \Sigma_{0,hp}^z,\\
(\ddiv \bm{\zeta}_{hp}^z, q_{hp})_{\omega_z}  &= (f^z, q_{hp})_{\omega_z}&&\textrm{for all }q_{hp}\in Q_{hp}^z,
\end{aligned}
\end{equation}
where the function $f^z$ is given by
\begin{align}\label{eq:nodal_f}
 f^z := (f+k^2u_{hp}) \psi_z - \G(u_{hp}) \cdot \nabla \psi_z, 
\end{align}
and the function $g^z\in L^2(\partial\omega_z\cap\partial\Omega)$ in the definition of $\Sigma_{g^z, h}^z$ is given by
\begin{equation}\label{eq:nodal_g}
\begin{aligned}
g^z &:= -\left(g+iku_{hp} - \gamma k\frac{\h}{\p}(g-\nabla_h u_{hp}\cdot\n+iku_{hp})\right)\psi_z \\
&\qquad+i\gamma\frac{\h}{\p}\left(g-\nabla_h u_{hp}\cdot\n+iku_{hp}\right)\left(\nabla_h\psi_z\cdot\n\right).
\end{aligned}
\end{equation}
Actually, $f^z$ and $g^z$ are defined such that, from the hat function orthogonality \eqref{eq:orthogonality}, we get
\begin{align}\label{eqn:comp_cond}
(f^z, 1)_{\omega_z} =(g^z,1)_{\partial\omega_z\cap\partial\Omega},
\end{align}
which is the pure Neumann problem compatibility condition.
\begin{remark}
From integration by parts, the boundary condition on $\partial\omega_z$, and the compatibility condition \eqref{eqn:comp_cond}, we note that
\[
(\ddiv \bm\zeta_{hp}^z,1)_{\omega_z} = \int_{\partial\omega_z} \bm\zeta_{hp}^z\cdot\n\, dx
= \int_{\partial\omega_z\cap\partial\Omega} g^z\, dx = (f^z,1)_{\omega_z}.
\]
Hence, together with \eqref{local_mixed} we have that
\begin{equation}\label{eq:divzeta_f_equal}
(\ddiv \bm{\zeta}_{hp}^z, q_{hp})_{\omega_z}  = (f^z, q_{hp})_{\omega_z}\qquad\textrm{for all }q_{hp}\in Q_{hp}(\omega_z).
\end{equation}
\end{remark}

We can now define the equilibrated flux reconstruction $\bm{\sigma}_{hp}$ as
\begin{equation}\label{eq:equilibrated_flux}
\bm{\sigma}_{hp} := \sum_{z\in\N} \bm{\zeta}_{hp}^z,
\end{equation}
and prove that it satisfies Definition~\ref{flux_recon}.
\begin{lemma}\label{lemma:fluxreconstruction}
The flux approximation $\bm{\sigma}_{hp}$, defined in \eqref{eq:equilibrated_flux}, is an equilibrated flux reconstruction in $H(\ddiv;\Omega)$ which satisfies, for any $T\in \T$,
\begin{align*}
(f+k^2u_{hp} - \ddiv\bm{\sigma}_{hp}, q_{hp})_T &= 0
\end{align*}
for all $q_{hp}\in \bigcap_{z\in\N(T)} Q_{hp}(\omega_z)|_T$, and
for any $E\in \E(\partial\Omega)$,
\begin{align*}
(\bm{\sigma}_{hp}\cdot\bm{n} +g+iku_{hp}-\gamma k\frac{\h}{\p}(g-\nabla_h u_{hp}\cdot\n+iku_{hp}), q_{hp})_E &= 0
\end{align*}
for all $q_{hp}\in \bigcap_{z\in\N(E)} Q_{hp}(\omega_z)|_E$.
\end{lemma}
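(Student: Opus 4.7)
The plan is to verify the three defining properties of an equilibrated flux reconstruction in sequence, and to read the two additional elementwise and edgewise orthogonalities off the local mixed problem \eqref{local_mixed} together with the partition-of-unity identities $\sum_{z\in\N(T)}\psi_z|_T=1$ and $\sum_{z\in\N(T)}\nabla\psi_z|_T=\mathbf{0}$. These identities are exactly what makes the right-hand sides \eqref{eq:nodal_f} and \eqref{eq:nodal_g} of the local problems telescope back to the data of the shifted Poisson problem \eqref{shifted_problem} when summed over nodes contributing to a fixed element or boundary edge.

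First I would check that $\bm{\sigma}_{hp}\in H(\ddiv;\Omega)$: since the normal trace of each $\bm{\zeta}_{hp}^z$ vanishes on $\partial\omega_z\setminus\partial\Omega$ by the choice of $\Sigma_{g^z,hp}^z$, extending $\bm{\zeta}_{hp}^z$ by zero outside $\omega_z$ preserves $H(\ddiv)$-regularity, and the finite sum \eqref{eq:equilibrated_flux} remains in $H(\ddiv;\Omega)$. Next, fix $T\in\T$ and $q_{hp}\in\bigcap_{z\in\N(T)} Q_{hp}(\omega_z)|_T$; extension by zero to $\omega_z\setminus T$ provides an admissible test function in \eqref{eq:divzeta_f_equal}, so $(\ddiv\bm{\zeta}_{hp}^z,q_{hp})_T=(f^z,q_{hp})_T$ for every $z\in\N(T)$. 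Summing over $z\in\N(T)$ and invoking the two partition-of-unity identities collapses $\sum_{z\in\N(T)} f^z=(f+k^2u_{hp})\cdot 1-\G(u_{hp})\cdot\mathbf{0}=f+k^2u_{hp}$ on $T$, which yields the claimed elementwise orthogonality; specializing $q_{hp}\equiv 1$ recovers the integral divergence equilibration of Definition~\ref{flux_recon}.

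For the boundary condition I fix $E\in\E(\partial\Omega)$ and let $T$ be its unique ambient triangle. Because $E\subset\partial\omega_z\cap\partial\Omega$ precisely for $z\in\N(T)$, the normal trace decomposes as $\bm{\sigma}_{hp}\cdot\bm{n}|_E=\sum_{z\in\N(T)}\Pi_E^{p_z} g^z$. Testing against $q_{hp}$ in a polynomial space contained in every relevant $\Pp{p_z}(E)$ allows me to strip off the $L^2(E)$-projections, and summing the $\psi_z$-contributions in \eqref{eq:nodal_g} over $z\in\N(T)$ telescopes via $\sum_{z\in\N(T)}\psi_z|_E=1$ to $-(g+iku_{hp})+\gamma k\frac{\h}{\p}(g-\nabla_h u_{hp}\cdot\n+iku_{hp})$, while the $(\nabla_h\psi_z\cdot\n)$-contributions cancel identically from $\sum_{z\in\N(T)}\nabla\psi_z|_T=\mathbf{0}$; taking $q_{hp}\equiv 1$ recovers the boundary part of Definition~\ref{flux_recon}.

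The main obstacle I foresee is the bookkeeping at boundary edges: the ``third'' vertex of $T$ opposite to $E$ need not lie on $\partial\Omega$, yet it still produces a nonzero $g^z|_E$ through the $(\nabla_h\psi_z\cdot\n)$-term, and it is precisely this contribution that drives the cancellation in the partition-of-unity identity for the gradients. Some care is also needed to justify that the intersection of polynomial test spaces appearing in the lemma contains a sufficient subset of $\Pp{p_z}(E)$ for every contributing node, so that the $L^2$-projections $\Pi_E^{p_z}$ can be dropped uniformly in the passage from the local to the summed orthogonality.
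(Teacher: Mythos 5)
Your proposal is correct and follows essentially the same route as the paper: zero-extension of the local fluxes for the $H(\ddiv;\Omega)$ membership, the identity \eqref{eq:divzeta_f_equal} combined with $\sum_{z\in\N(T)}\nabla\psi_z=0$ for the elementwise orthogonality, and the essential boundary condition in $\Sigma_{g^z,hp}^z$ combined with $\sum_{z\in\N(T_E)}\psi_z=1$ and $\sum_{z\in\N(T_E)}\nabla\psi_z\cdot\n=0$ for the edge identity. Your remark about the vertex opposite a boundary edge contributing only through the $\nabla\psi_z\cdot\n$ term, and about dropping the projections $\Pi_E^{p_z}$ on the intersected test space, correctly identifies the (implicit) bookkeeping in the paper's own argument.
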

\begin{proof}
For all $z\in\N$, by extension of $\bm\zeta_{hp}^z$ by zero in $\Omega\setminus\omega_z$, we have that $\bm\zeta_{hp}^z\in H(\ddiv;\Omega)$; therefore, $\bm{\sigma}_{hp}\in H(\ddiv;\Omega)$ also holds.
For any $T\in\T$, by using the partition of unity property of $\psi_z$, the definition of
$\bm{\sigma}_{hp}$, and \eqref{eq:divzeta_f_equal}, it holds that
\begin{align*}
(f+k^2u_{hp} - \ddiv\bm{\sigma}_{hp}, q_{hp})_T &= \sum_{z\in\N(T)} (\psi_z(f+k^2u_{hp} ) - \ddiv \bm{\zeta}_{hp}^z,q_{hp})_T\\
& =  \sum_{z\in\N(T)}( \G(u_{hp}) \cdot \nabla \psi_z,q_{hp})_T\\
&=0,
\end{align*}
for all $q_{hp}\in Q_{hp}$, 
where in the last step we used the fact that $\sum_{z\in\mathcal{N}(T)}\nabla\psi_z=0$.
Using the partition of unity property of $\psi_z$ along the boundary edges,
the definition of $\bm{\sigma}_{hp}$, and the fact that $\bm\zeta_{hp}^z\in\Sigma_{g^z,hp}^z$,
we get for any $E\in \E(\partial\Omega)$, with associated element $T_E\in\T$, that
\begin{align*}
(\bm{\sigma}_{hp}\cdot\bm{n}, q_{hp})_E
&= 
\sum_{z\in\N(T_E)}(\bm{\zeta}_{hp}^z\cdot\bm{n},q_{hp})_E
=
\sum_{z\in\N(T_E)}(g^z,q_{hp})_E\\
&=
\left( -(g+iku_{hp})+\gamma k\frac{\h}{\p}(g-\nabla_h u_{hp}\cdot\n+iku_{hp}),q_{hp}\right)_E,
\end{align*}
for all $q_{hp}\in Q_{hp}$, where we use the fact that $\sum_{z\in\mathcal{N}(T_E)}\psi_z=1$ and
the fact that $\sum_{z\in\mathcal{N}(T_E)}\nabla\psi_z\cdot\n=0$ on $E$.
\end{proof}
\par
We proceed by showing that the flux reconstruction \eqref{eq:equilibrated_flux} is efficient.
In order to do that, we start by defining
the following data oscillation terms.
\begin{definition}[Data oscillations]
We define
\begin{align*}
\osc^2(f^z) &=  \sum_{T\in\T(z)} \frac{h_T^2}{j_{1,1}^2}\|f^z - \Pi^{p_z}_T f^z\|_{0,T}^2,\\
\osc^2(g^z) &=  \sum_{E\in\E(z)\cap\E(\partial\Omega)} 
C_{tr}^2h_E\| g^z - \Pi^{p_z}_E g^z\|_{0,E}^2,
\end{align*}
for all $z\in\N$, and
\begin{align*}
\osc^2(f) &= \sum_{z\in \N} \osc^2(f^z), \qquad \osc^2(g) = \sum_{z\in \N(\partial\Omega)}\osc^2(g^z).
\end{align*}
\end{definition}
\par

Now, we derive an estimate of
the strong residual.
The following lemma is based on the results in \cite{CF1999}.
\begin{lemma}[Continuous efficiency, flux reconstruction]\label{lemma:efficiency}
Let $w$ be the weak solution of the (shifted) Poisson problem \eqref{shifted_problem},
with $u_{hp}\in V_{hp}$ being the $hp$-DG approximation given by \eqref{discreteproblem}.
Furthermore, let $z\in\mathcal{N}$ and $r^z\in H^1_*(\omega_z):=\{ v\in H^1(\omega_z)\, : \, (v,1)_{\omega_z}=0\}$
be the solution to
the continuous problem
\begin{align}\label{eq:residual}
\begin{split}
	(\nabla r^z,\nabla v)_{\omega_z} &= -(\psi_z\G(u_{hp}),\nabla v)_{\omega_z} + \sum_{T\in\T(z)}(\Pi^{p_z}_T f^z,v)_T\\
	&\quad-\sum_{E\in\E(z)\cap\E(\partial\Omega)}(\Pi^{p_z}_{E}  g^z,v)_E
\end{split}
\end{align}
for all $v\in H^1(\omega_z)$, with the right hand side $f^z$  and the boundary function $g^z$
given in \eqref{eq:nodal_f} and \eqref{eq:nodal_g}, respectively.
Then, it holds that
\begin{align*}
 \|\nabla r^z\|_{0,\omega_z} &\lesssim \|\nabla w - \G(u_{hp})\|_{0,\omega_z}
 + \| i\gamma\frac{\sqrt{\h}}{\p}\left(g-\nabla_h u_{hp}\cdot\n+iku_{hp}\right)\|_{0,\partial\omega_z\cap\partial\Omega}\\
 &\quad +  \osc(f^z) + \osc(g^z).
\end{align*}
\end{lemma}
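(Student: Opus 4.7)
The plan is to test the variational problem \eqref{eq:residual} with $v = r^z$ itself, then manipulate the right-hand side so that the data projections become the unprojected data $f^z$ and $g^z$ (generating the oscillation terms), and finally use the definitions \eqref{eq:nodal_f}--\eqref{eq:nodal_g} together with the weak form of the shifted Poisson problem \eqref{shifted_problem} to reveal the term $\nabla w - \G(u_{hp})$.

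First I would set $v = r^z \in H^1_*(\omega_z)$ in \eqref{eq:residual} to obtain
\begin{align*}
\|\nabla r^z\|_{0,\omega_z}^2 &= -(\psi_z\G(u_{hp}),\nabla r^z)_{\omega_z} + \sum_{T\in\T(z)}(\Pi^{p_z}_T f^z, r^z)_T - \sum_{E\in\E(z)\cap\E(\partial\Omega)}(\Pi^{p_z}_E g^z, r^z)_E.
\end{align*}
Then I split $\Pi^{p_z}_T f^z = f^z + (\Pi^{p_z}_T f^z - f^z)$ and similarly for $g^z$. Using the $L^2$-orthogonality of the projection errors to constants, I can replace $r^z$ by $r^z - \Pi^0_T r^z$ in the oscillation contributions; the Poincar\'e inequality \eqref{Poincare} and the trace estimate of Lemma~\ref{lem:tracebound} then bound those contributions by $\bigl(\osc(f^z) + \osc(g^z)\bigr)\|\nabla r^z\|_{0,\omega_z}$.

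Next, using the explicit forms \eqref{eq:nodal_f}, \eqref{eq:nodal_g} of $f^z$ and $g^z$, I would collect the remaining bulk and boundary terms so that
\begin{align*}
&-(\psi_z\G(u_{hp}),\nabla r^z)_{\omega_z} + (f^z, r^z)_{\omega_z} - (g^z, r^z)_{\partial\omega_z\cap\partial\Omega} \\
&\quad= -(\G(u_{hp}),\nabla(\psi_z r^z))_{\omega_z} + (f + k^2 u_{hp}, \psi_z r^z)_{\omega_z} \\
&\quad\quad + \bigl(g + iku_{hp} - \gamma k \tfrac{\h}{\p}(g - \nabla_h u_{hp}\cdot\n + iku_{hp}),\, \psi_z r^z\bigr)_{\partial\omega_z\cap\partial\Omega} \\
&\quad\quad - \bigl(i\gamma\tfrac{\h}{\p}(g - \nabla_h u_{hp}\cdot\n + iku_{hp})(\nabla\psi_z\cdot\n),\, r^z\bigr)_{\partial\omega_z\cap\partial\Omega}.
\end{align*}
Since $\psi_z r^z$ vanishes on $\partial\omega_z\setminus\partial\Omega$, it extends by zero to a legal test function in $H^1(\Omega)$ for the weak form of \eqref{shifted_problem}; this weak form then identifies the first three terms above with $(\nabla w - \G(u_{hp}),\nabla(\psi_z r^z))_{\omega_z}$.

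Finally, I would apply Cauchy--Schwarz to the two surviving terms. For the first, $\|\nabla(\psi_z r^z)\|_{0,\omega_z} \lesssim \|\nabla r^z\|_{0,\omega_z}$ follows from $\|\nabla\psi_z\|_{L^\infty}\lesssim h_{\omega_z}^{-1}$ and the patch Poincar\'e inequality $\|r^z\|_{0,\omega_z}\lesssim h_{\omega_z}\|\nabla r^z\|_{0,\omega_z}$ (available because $r^z$ has zero mean on $\omega_z$). For the boundary term, I would factor $\h/\p$ as $(\sqrt{\h}/\p)\cdot\sqrt{\h}$ and absorb the factor $\sqrt{\h}\,\nabla\psi_z\cdot\n$ using $\|\sqrt{\h}\nabla\psi_z\|_{L^\infty}\lesssim h_{\omega_z}^{-1/2}$, combined with the trace bound $\|r^z\|_{0,\partial\omega_z\cap\partial\Omega} \lesssim h_{\omega_z}^{1/2}\|\nabla r^z\|_{0,\omega_z}$. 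Dividing through by $\|\nabla r^z\|_{0,\omega_z}$ then yields the stated inequality. The main subtlety I anticipate is the correct scaling of the last boundary term, in particular verifying that the combination $\h/\p$ naturally splits as $\sqrt{\h}/\p$ times a factor of the right dimension to match the trace estimate for $r^z$ on $\partial\omega_z\cap\partial\Omega$.
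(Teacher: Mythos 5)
Your proposal is correct and follows essentially the same route as the paper's proof: the paper characterizes $\|\nabla r^z\|_{0,\omega_z}$ as a supremum over unit-norm $v\in H^1_*(\omega_z)$ rather than testing with $v=r^z$ and dividing, but the two are equivalent, and the subsequent steps (splitting off the projection errors into oscillation terms via \eqref{Poincare} and Lemma~\ref{lem:tracebound}, recombining $f^z$ and $g^z$ with the product rule to produce $(\nabla w-\G(u_{hp}),\nabla(\psi_z v))_{\omega_z}$ plus the residual boundary term, and the scaling bounds $\|\nabla(\psi_z v)\|_{0,\omega_z}\lesssim\|\nabla v\|_{0,\omega_z}$ and $\|\sqrt{\h}(\nabla\psi_z\cdot\n)v\|_{0,\partial\omega_z\cap\partial\Omega}\lesssim\|\nabla v\|_{0,\omega_z}$) coincide with the paper's. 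The scaling subtlety you flag for the last boundary term resolves exactly as you anticipate.
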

\begin{proof}
Since the right hand side $f^z$  and the boundary function $g^z$ 
are constructed such that the compatibility condition \eqref{eqn:comp_cond} is satisfied on $\omega_z$ it is, therefore,
also satisfied for their $L^2$-projections.
This, together with the Lax-Milgram lemma, implies that
\eqref{eq:residual} is well posed.
We have that
\begin{align}\label{eq:efficiency:proof}
 \|\nabla r^z\|_{0,\omega_z} = \sup_{v\in H^1_*(\omega_z), \|\nabla v\|_{0,\omega_z}=1} (\nabla r^z,\nabla v)_{\omega_z};
\end{align}
moreover, for $v\in H^1_*(\omega_z)$, $\|\nabla v\|_{0,\omega_z}=1$, we can write
\begin{align*}
(\nabla r^z,\nabla v)_{\omega_z} 
&= -(\psi_z\G(u_{hp}),\nabla v)_{\omega_z}  + \sum_{T\in\T(z)}(\Pi^{p_z}_T f^z,v)_T
    - \sum_{E\in\E(z)\cap\E(\partial\Omega)}(\Pi^{p_z}_E g^z,v)_E\\
&= -(\psi_z\G(u_{hp}),\nabla v)_{\omega_z}  + (f^z,v)_{\omega_z} -(g^z,v)_{\partial\omega_z\cap\partial\Omega}\\
&\quad+ \sum_{T\in\T(z)}(\Pi^{p_z}_T f^z - f^z ,v -\Pi_T^0 v)_T
    - \!\!\!\!\!\!\sum_{E\in\E(z)\cap\E(\partial\Omega)}(\Pi^{p_z}_E  g^z - g^z,v -\Pi_T^0 v)_E.
\end{align*}
The last two terms on the right-hand side are bounded by $\osc(f^z)$ and $\osc(g^z)$, respectively, by applying the Cauchy-Schwarz inequality, Lemmas~\ref{lem:tracebound} \&~\ref{Poincare}, and the fact that $\|\nabla v\|_{0,\omega_z}=1$. For the first three terms on the right-hand side, by application of integration by parts, the Cauchy-Schwarz inequality, and the definitions of $f^z$, $g^z$, and $w$, we obtain
\begin{align*}
-(&\psi_z\G(u_{hp}),\nabla v)_{\omega_z}  + (f^z,v)_{\omega_z} - (g^z,v)_{\partial\omega_z\cap\partial\Omega}\\
&\quad= -(\psi_z\G(u_{hp}),\nabla v)_{\omega_z}  +
( (f+k^2u_{hp})\psi_z,v)_{\omega_z} -( \G(u_{hp}) \cdot\nabla \psi_z,v)_{\omega_z}\\
&\qquad - (g^z,v)_{\partial\omega_z\cap\partial\Omega}\\
&\quad= (\nabla w-\G(u_{hp}),\nabla_h (\psi_zv))_{\omega_z}\\
&\qquad - (i\gamma\frac{\h}{\p}\left(g-\nabla_h u_{hp}\cdot\n+iku_{hp}\right)(\nabla_h\psi_z\cdot\n),v)_{\partial\omega_z\cap\partial\Omega}\\
&\quad\leq \|\nabla w - \G(u_{hp})\|_{0,\omega_z}\|\nabla_h(\psi_z v)\|_{0,\omega_z}\\
&\qquad+ \| i\gamma\frac{\sqrt{\h}}{\p}\left(g-\nabla_h u_{hp}\cdot\n+iku_{hp}\right)\|_{0,\partial\omega_z\cap\partial\Omega}
\|\sqrt{\h}(\nabla_h\psi_z\cdot\n) v \|_{0,\partial\omega_z\cap\partial\Omega}. 
\end{align*}
By the triangle inequality, the scaling of the hat-functions, shape regularity, and the Poincar\'e inequality, we have that
\[
\|\nabla_h(\psi_z v)\|_{0,\omega_z}\leq \|v\nabla_h \psi_z\|_{0,\omega_z} + \|\psi_z\nabla v\|_{0,\omega_z}
\lesssim \|\h v\|_{0,\omega_z} + \|\nabla v\|_{0,\omega_z} \lesssim \|\nabla v\|_{0,\omega_z} = 1;
\]
a similar bound holds for $\|\sqrt{\h}(\nabla_h\psi_z\cdot\n) v \|_{0,\partial\omega_z\cap\partial\Omega}$ after application of shape regularity and the trace estimates. Therefore,
we conclude from the previous estimates that
\begin{align*}
(\nabla r^z,\nabla v)_{\omega_z}  &\lesssim \|\nabla w - \G(u_{hp})\|_{0,\omega_z} \\
&\qquad+ \| i\gamma\frac{\sqrt{\h}}{\p}\left(g-\nabla_h u_{hp}\cdot\n+iku_{hp}\right)\|_{0,\partial\omega_z\cap\partial\Omega} 
+ \osc(f^z) + \osc(g^z),
\end{align*}
for all $v\in H^1_*(\omega_z)$, such that $\|\nabla v\|_{0,\omega_z}=1$. Inserting this result into \eqref{eq:efficiency:proof} completes the proof.
\end{proof}
\par
In the following lemma, we essentially report \cite[Theorem 7]{BPS2009}, which is a key result in the proof of $p$-robustness.
\begin{lemma}\label{lemma:stability}
Let $u_{hp}\in V_{hp}$ be the hp-DG approximation given by \eqref{discreteproblem}; furthermore, for $z\in\N$,
let $\bm{\zeta}_{hp}^z\in \Sigma_{g^z, hp}^z$ be the solution to the local nodal mixed problem
\eqref{local_mixed}, $\psi_z\in H^1(\Omega)$ be the nodal hat function associated with the node $z$, and $r^z\in H^1_*(\omega_z)$ be
defined as in Lemma~\ref{lemma:efficiency}. Then,
the stability result
\begin{align*}
\|\psi_z\G(u_{hp})+\bm{\zeta}_{hp}^z\|_{0,\omega_z} \leq C \|\nabla r^z\|_{0,\omega_z}
\end{align*}
holds, with a constant $C>0$ that is independent of the polynomial degree,
mesh size, and wave number, but depends on the shape regularity of the mesh.
\end{lemma}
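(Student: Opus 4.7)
The plan is to characterize $\bm{\zeta}_{hp}^z$ as a constrained best approximation in $\Sigma_{g^z,hp}^z$, produce a continuous admissible competitor from the residual $r^z$ of Lemma~\ref{lemma:efficiency}, and then invoke a $p$-robust discrete lifting to pass from the continuous competitor to an admissible discrete one. The desired bound then drops out by comparison.

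First, I would recall the standard saddle-point reformulation of \eqref{local_mixed}. Since the compatibility condition \eqref{eqn:comp_cond} is inherited by the $L^2$-projections of $f^z$ and $g^z$, the divergence acting on $\Sigma_{g^z,hp}^z$ surjects onto the appropriate affine subspace of $Q_{hp}(\omega_z)$ determined by $\Pi^{p_z}_T f^z$ and the prescribed boundary trace. Hence the velocity component of \eqref{local_mixed} admits the equivalent characterization
\[
\bm{\zeta}_{hp}^z = \argmin\bigl\{\,\|\psi_z\G(u_{hp})+\bm{\tau}_{hp}\|_{0,\omega_z}\;:\;\bm{\tau}_{hp}\in\Sigma_{g^z,hp}^z,\;\ddiv\bm{\tau}_{hp}|_T = \Pi^{p_z}_T f^z\ \forall\,T\in\T(z)\,\bigr\}.
\]
In particular, any admissible discrete competitor yields an upper bound for $\|\psi_z\G(u_{hp})+\bm{\zeta}_{hp}^z\|_{0,\omega_z}$.

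Second, I would construct a natural continuous competitor from $r^z$. Setting $\bm{\sigma}^z := -\psi_z\G(u_{hp}) - \nabla r^z$, integration by parts in the variational identity \eqref{eq:residual} shows that $\bm{\sigma}^z \in H(\ddiv;\omega_z)$ satisfies $\ddiv\bm{\sigma}^z = \Pi^{p_z}_T f^z$ elementwise on $\omega_z$, $\bm{\sigma}^z\cdot\bm{n} = \Pi^{p_z}_E g^z$ on each $E\subset\partial\omega_z\cap\partial\Omega$, and $\bm{\sigma}^z\cdot\bm{n} = 0$ on $\partial\omega_z\setminus\partial\Omega$. By construction,
\[
\|\psi_z\G(u_{hp}) + \bm{\sigma}^z\|_{0,\omega_z} = \|\nabla r^z\|_{0,\omega_z},
\]
so $\bm{\sigma}^z$ is exactly the quantity we wish to match in the discrete space.

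Finally, the crux is to transfer $\bm{\sigma}^z$ to an element $\bm{\tau}_{hp}^\star \in \Sigma_{g^z,hp}^z$ preserving both the elementwise divergence and the normal trace on $\partial\omega_z\cap\partial\Omega$, with a bound depending only on shape regularity and independent of the polynomial degree. This is precisely the content of the patchwise polynomial extension result \cite[Theorem~7]{BPS2009}, which provides $\bm{\tau}_{hp}^\star \in \Sigma_{g^z,hp}^z$ with $\ddiv\bm{\tau}_{hp}^\star|_T = \Pi^{p_z}_T f^z$ and $\|\psi_z\G(u_{hp})+\bm{\tau}_{hp}^\star\|_{0,\omega_z} \lesssim \|\psi_z\G(u_{hp})+\bm{\sigma}^z\|_{0,\omega_z} = \|\nabla r^z\|_{0,\omega_z}$. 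Combined with the minimization characterization of the first step, this yields the claim. The main obstacle is exactly this $p$-robust discrete lifting: naive Raviart-Thomas interpolation loses $p$-uniform stability, and one must rely on the reference-simplex polynomial extension operators of \cite{BPS2009}, assembled triangle-by-triangle so that normal continuity across interior edges of $\omega_z$ is preserved and the prescribed trace on $\partial\omega_z\cap\partial\Omega$ is matched. Everything else in the argument is a routine mixed-methods manipulation.
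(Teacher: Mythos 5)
Your proposal is correct and follows essentially the same route as the paper: both characterize $\bm{\zeta}_{hp}^z$ as the constrained minimizer over divergence-constrained discrete fluxes and then invoke \cite[Theorem 7]{BPS2009} for the $p$-robust comparison with $\|\nabla r^z\|_{0,\omega_z}$. The only cosmetic difference is that you make the continuous competitor $-\psi_z\G(u_{hp})-\nabla r^z$ explicit, whereas the paper phrases the same fact by identifying $\|\nabla r^z\|_{0,\omega_z}$ with the dual norm of the residual functional in the notation of \cite{BPS2009}.
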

\begin{proof}
As in \cite[Corollary 3.16]{EV2015}, the proof is essentially \cite[Theorem 7]{BPS2009}. 
Note that,
\[
\| \nabla r^z \|_{0,\omega_z}
= \sup_{v\in H^1_*(\omega_z),\, \|\nabla v\|_{0,\omega_z}=1} (\nabla r^z, \nabla v)_{\omega_z}.
\]
In fact, from \eqref{eq:residual} we have that
\begin{align*}
&(\nabla r^z, \nabla v)_{\omega_z} 
= -(\psi_z\G(u_{hp}),\nabla v)_{\omega_z} 
+\sum_{T\in\T(z)}(\Pi^{p_z}_T f^z,v)_T - \sum_{E\in\E(z)\cap\E(\partial\Omega)}(\Pi^{p_z}_E g^z,v)_E\\
&\quad=  
\sum_{T\in\T(z)} \int_T(\ddiv(\psi_z\G(u_{hp}))+\Pi^{p_z}_T f^z)v\,dx
+\sum_{E\in \E(z)\cap\E(\Omega)}\int_E \jump{-\psi_z\G(u_{hp})}v\, ds \\
&\quad\qquad-\sum_{E\in\E(z)\cap\E(\partial\Omega)}\int_E (\Pi^{p_z}_E g^z + \psi_z\G(u_{hp}))v\,ds
\end{align*}
for all $v\in H^1_*(\omega_z)$ such that $\|\nabla v\|_{0,\omega_z}=1$.

Defining $r_T:= \ddiv(\psi_z\G(u_{hp}))+\sum_{T\in\T(z)}\Pi^{p^z}_T f^z$, $r_E:= \jump{-\psi_z\G(u_{hp})}$ for interior edges,
and $r_E:=-\sum_{E\in\E(z)\cap\E(\partial\Omega)}(\Pi^{p_z}_E g^z + \psi_z\G(u_{hp}))$ for edges on the boundary,
we have that $\| \nabla r^z \|_{0,\omega_z}$ in our notation is $\| r \|_{[H^1(\omega)\backslash\mathbb{R}]}$ in the notation
of \cite[Lemma 7]{BPS2009}.
Moreover,
\begin{align*}
\| \psi_z\G(u_{hp}) + \bm{\zeta}_{hp}^z\|_{0,\omega_z} 
= \inf_{\substack{\bm{\tau}_{hp}\in\Sigma_{g^z,hp}^z\\\ddiv(\bm{\tau}_{hp})|_T=\Pi^{p_z}_T f^z \;\forall T\in\T(z)}}\| \psi_z\G(u_{hp}) + \bm{\tau}_{hp}\|_{0,\omega_z},
\end{align*}
which, in the notation of \cite{BPS2009}, reads as
\[
\inf_{\sigma\in \text{RT}^p_{-1,0},\, \ddiv\sigma=r} \|\sigma\|_0,\]
formulated in the broken Raviart-Thomas finite element space with imposed jumps
$\jump{-\psi_z\G(u_{hp})}$.
\end{proof}
\par
Finally, by using Lemmas \ref{lemma:efficiency} and \ref{lemma:stability},
we prove a $p$-robust efficiency bound of the first term in the error estimator \eqref{eq:errorestimator}.
\begin{theorem}[Flux reconstruction efficiency]\label{thm:flux_efficiency}
Let $u\in H^1(\Omega)$ be the weak solution of the Helmholtz problem \eqref{weak:formulation}, 
$u_{hp}\in V_{hp}$ be the discrete solution of \eqref{discreteproblem}, and
$\bm{\sigma}_{hp}\in H(\ddiv;\Omega)$ be the equilibrated flux reconstruction of $u_{hp}$ defined
in \eqref{eq:equilibrated_flux}; then,
\begin{align*}
 \|\G(u_{hp})+\bm{\sigma}_{hp}\|_{0,\Omega} &\lesssim \|\nabla u - \G(u_{hp})\|_{0,\Omega} +  k^2\|u-u_{hp}\|_{0,\Omega}
 + k\|u-u_{hp}\|_{0,\partial\Omega}\\
 &\quad +\osc(f) +\osc(g)
 + \|\gamma k \frac{\h}{\p}(g - \nabla_h u_{hp}\cdot\n + iku_{hp})\|_{0,\partial\Omega} \\
 &\quad
 + \| i\gamma\frac{\sqrt{\h}}{\p}\left(g-\nabla_h u_{hp}\cdot\n+iku_{hp}\right)\|_{0,\partial\Omega}.
\end{align*}
\end{theorem}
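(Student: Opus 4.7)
The plan is to leverage the partition-of-unity structure of the nodal decomposition $\bm{\sigma}_{hp}=\sum_z\bm{\zeta}_{hp}^z$, reduce the global estimate to a sum of patchwise estimates, and then chain together Lemmas \ref{lemma:stability} and \ref{lemma:efficiency}. Since $\sum_{z\in\N}\psi_z\equiv 1$ on $\Omega$, I can write
\begin{equation*}
\G(u_{hp})+\bm{\sigma}_{hp} \;=\; \sum_{z\in\N}\bigl(\psi_z\G(u_{hp})+\bm{\zeta}_{hp}^z\bigr),
\end{equation*}
and each summand is supported on $\omega_z$. By the finite overlap of the patches (bounded in terms of the shape-regularity constant), a standard Cauchy--Schwarz argument yields
\begin{equation*}
\|\G(u_{hp})+\bm{\sigma}_{hp}\|_{0,\Omega}^2 \;\lesssim\; \sum_{z\in\N}\|\psi_z\G(u_{hp})+\bm{\zeta}_{hp}^z\|_{0,\omega_z}^2.
\end{equation*}

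Next, I would apply Lemma \ref{lemma:stability} patchwise to obtain $\|\psi_z\G(u_{hp})+\bm{\zeta}_{hp}^z\|_{0,\omega_z}\lesssim \|\nabla r^z\|_{0,\omega_z}$ with $r^z$ as in Lemma \ref{lemma:efficiency}, and then use Lemma \ref{lemma:efficiency} itself to bound each $\|\nabla r^z\|_{0,\omega_z}$ by the local quantity $\|\nabla w-\G(u_{hp})\|_{0,\omega_z}$, the boundary term with $\sqrt{\h}/\p$ restricted to $\partial\omega_z\cap\partial\Omega$, and the local oscillations $\osc(f^z)+\osc(g^z)$. Summing the squares over $z\in\N$ and exploiting finite overlap again, the local patch norms assemble to the corresponding global norms on $\Omega$ and $\partial\Omega$, and the local oscillations accumulate to $\osc(f)^2+\osc(g)^2$.

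The remaining point is to replace the shifted-Poisson solution $w$ by the actual Helmholtz solution $u$: by the triangle inequality,
\begin{equation*}
\|\nabla w-\G(u_{hp})\|_{0,\Omega}\;\le\;\|\nabla(w-u)\|_{0,\Omega}+\|\nabla u-\G(u_{hp})\|_{0,\Omega},
\end{equation*}
so I need to control $\|\nabla(w-u)\|_{0,\Omega}$. Subtracting the equations from \eqref{problem} and \eqref{shifted_problem}, the difference $w-u$ satisfies the pure Neumann problem
\begin{equation*}
-\Delta(w-u)=k^2(u_{hp}-u)\text{ in }\Omega,\qquad \nabla(w-u)\cdot\n = ik(u_{hp}-u)-\gamma k\tfrac{\h}{\p}(g-\nabla_h u_{hp}\cdot\n+iku_{hp})\text{ on }\partial\Omega,
\end{equation*}
whose compatibility condition holds by construction of the shifted problem. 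Testing against a suitable $v\in H^1_*(\Omega)$ with $\|\nabla v\|_{0,\Omega}=1$, using integration by parts, Cauchy--Schwarz, and the Poincar\'e/trace inequalities for $H^1_*(\Omega)$, I would obtain
\begin{equation*}
\|\nabla(w-u)\|_{0,\Omega} \;\lesssim\; k^2\|u-u_{hp}\|_{0,\Omega}+k\|u-u_{hp}\|_{0,\partial\Omega}+\bigl\|\gamma k\tfrac{\h}{\p}(g-\nabla_h u_{hp}\cdot\n+iku_{hp})\bigr\|_{0,\partial\Omega},
\end{equation*}
which produces exactly the three extra terms appearing in the statement. Combining all the above inequalities and taking square roots yields the claim.

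The main obstacle is the last step: it is the only place where the indefinite, non-elliptic character of the original Helmholtz problem enters, forcing us to pay the $k^2\|u-u_{hp}\|_{0,\Omega}$ and $k\|u-u_{hp}\|_{0,\partial\Omega}$ contributions. Everything else is a routine application of the two patchwise lemmas combined with partition-of-unity and finite-overlap bookkeeping, and the care needed is only to make sure that the $L^2$-type boundary term carrying the factor $\sqrt{\h}/\p$ from Lemma \ref{lemma:efficiency} and the stronger $\h/\p$ boundary term from the Neumann correction are tracked separately, as they appear with different powers of $\h$ in the final estimate.
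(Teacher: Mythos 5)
Your proposal is correct and follows essentially the same route as the paper: partition of unity to localize onto nodal patches, Lemma~\ref{lemma:stability} followed by Lemma~\ref{lemma:efficiency} on each patch, and then the triangle inequality together with the dual characterization of $\|\nabla(w-u)\|_{0,\Omega}$ and integration by parts to trade the shifted-Poisson solution $w$ for $u$, which is exactly where the $k^2\|u-u_{hp}\|_{0,\Omega}$, $k\|u-u_{hp}\|_{0,\partial\Omega}$, and $\gamma k\frac{\h}{\p}$ terms arise. The only cosmetic difference is that you assemble the patch contributions via squared norms and finite overlap, whereas the paper simply sums the patchwise $L^2$ norms directly; both are valid.
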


\begin{proof}
The uniform stability of the local mixed problems from Lemma~\ref{lemma:stability}, and the partition of unity property
prove that
\begin{align*}
 \|\G(u_{hp})+\bm{\sigma}_{hp}\|_{0,\Omega}
 \leq  \sum_{z\in\N}\|\psi_z\G(u_{hp})+\bm{\zeta}_{hp}^z\|_{0,\omega_z}
 \leq C \sum_{z\in\N}\|\nabla r^z\|_{0,\omega_z}.
\end{align*}
Applying Lemma~\ref{lemma:efficiency}, noting the finite overlap of the patches $\omega_z$, bounds this term
by $\osc(f)$, $\osc(g)$, $\|\nabla w - \G(u_{hp})\|_{0,\Omega}$, and the boundary terms appearing in the right-hand side of
the required bound; therefore, all that remains is to bound $\|\nabla w - \G(u_{hp})\|_{0,\Omega}$. By the triangle inequality,
we have
\begin{align}
  \|\nabla w - \G(u_{hp})\|_{0,\Omega} &\leq \|\nabla u - \G(u_{hp})\|_{0,\Omega} + \|\nabla (w - u)\|_{0,\Omega} \nonumber\\
  &= \|\nabla u - \G(u_{hp})\|_{0,\Omega} + \sup_{v\in H^1_*(\Omega),\, \|\nabla v\|_{0,\Omega}=1} (\nabla(w-u),\nabla v).
  \label{eq:flux_efficiency:proof}
\end{align}
Applying integration by parts, the definition of $w$ from \eqref{shifted_problem}, \eqref{problem}, and Cauchy-Schwarz, we get that
\begin{align*}
(\nabla(w-u),\nabla v)
&= -(\Delta (w-u), v) + (\nabla (w-u)\cdot \n, v)_{\partial\Omega} \\
&= (f+k^2u_{hp} - (f + k^2 u),v) \\
&\quad+ (g+iku_{hp} -\gamma k\frac{\h}{\p}(g-\nabla_h u_{hp}\cdot\n+iku_{hp})-(g+iku),v)_{\partial\Omega}\\
&\leq k^2\|u-u_{hp}\|_{0,\Omega}\|v\|_{0,\Omega} + k\|u-u_{hp}\|_{0,\partial\Omega}\|v\|_{0,\partial\Omega}\\
&\quad+ \|\gamma k \frac{\h}{\p}(g - \nabla_h u_{hp}\cdot\n + iku_{hp})\|_{0,\partial\Omega}\|v\|_{0,\partial\Omega},
\end{align*}
for all $v\in H^1_*(\Omega)$ such that $\|\nabla v\|_{0,\Omega}=1$. From the Poincar\'e inequality we get that
$\|v\|_{0,\Omega}\leq C\|\nabla v\|_{0,\Omega}=C$, where the constant $C$ depends only on the domain $\Omega$, and similarly
by applying a trace estimate $\|v\|_{0,\partial\Omega}\leq C'\|\nabla v\|_{0,\Omega}=C'$; therefore, inserting this result into
\eqref{eq:flux_efficiency:proof} completes the proof.
\end{proof}

%%%%%%%%%%%%
%% Potential reconstruction
%%%%%%%%%%%%%%
\subsection{Localized potential reconstruction}\label{sec:potential}
In this section, we define the potential reconstruction such that the error
estimator \eqref{eq:errorestimator} is efficient.

In order to define a localized polynomial space on
patches we need to distinguish between boundary and interior nodes.
For a given boundary node $z\in\N(\partial\Omega)$, with associated integer $p_z\geq 1$ as defined in Section~\ref{sec:equilibrated_flux_reconstruction}, we define the localized polynomial space
\[
    V_{hp}^z:=\{ v_{hp}\in C^0(\overline{\omega_z})\,:\, v_{hp}|_T\in\Pp{p_z+1}(T)\quad\forall T\in\T(z), v_{hp} = 0 \text{ on } \partial\omega_z\setminus\partial\Omega \};
\]
for an internal node $z\in\N\setminus\N(\partial\Omega)$, with integer $p_z$, we define the localized polynomial space as
\[
    V_{hp}^z:=\{ v_{hp}\in C^0(\overline{\omega_z})\,:\, v_{hp}|_T\in\Pp{p_z+1}(T)\quad\forall T\in\T(z), v_{hp} = 0 \text{ on } \partial\omega_z \}.
\]
We then choose $\widetilde{s}_{hp}\in H^1(\Omega)$ as
\[
\widetilde{s}_{hp} := \sum_{z\in\N} s_{hp}^z,
\]
where
\begin{align}\label{eq:potentialminimization:nodal}
   s_{hp}^z := \argmin_{v_{hp}\in V_{hp}^z}\|\nabla_h(\psi_z u_{hp}) - \nabla v_{hp}\|_{0,\omega_z}
\end{align}
with extension by zero in $\Omega\setminus\omega_z$,
which is equivalent to finding $s^z_{hp}$ such that
\begin{align*}
 (\nabla s_{hp}^z,\nabla v_{hp})_{\omega_z} = (\nabla_h(\psi_zu_{hp}),\nabla v_{hp})_{\omega_z} \quad\text{for all } v_{hp}\in V_{hp}^z.
\end{align*}
Then, the potential reconstruction $s_{hp}\in H^1_*(\Omega)$ is defined as
\begin{equation}\label{eq:potentialminimization}
s_{hp} := \widetilde{s}_{hp} - \frac1{|\Omega|} \int_{\Omega} \widetilde{s}_{hp}\, dx,
\end{equation}
which clearly satisfies Definition~\ref{potential_recon}.
\par
It has been noted in \cite[Remark 3.10]{EV2015} that the local minimization in \eqref{eq:potentialminimization:nodal}
in primal form is equivalent to the following minimization in mixed form
\begin{align*}
	\bm{\zeta}_{hp}^z := \argmin_{\bm{\tau}_{hp}\in \Sigma_{0,hp}^z, \, \ddiv(\bm{\tau}_{hp})=0}\| \rot_h(\psi_z u_{hp})+\bm{\tau}_{hp}\|_{\omega_z},
\end{align*}
which is equivalent to solving the following (local) mixed problem:
Find $(\bm\zeta_{hp}^z,r_{hp}^z)\in\Sigma_{0,hp}^z\times Q_{hp}^z$ such that
\begin{align*}
(\bm{\zeta}_{hp}^z, \bm{\tau}_{hp})_{\omega_z} - (r_{hp}^z, \ddiv \bm{\tau}_{hp})_{\omega_z} &= -(\rot_h(\psi_z u_{hp}),\bm{\tau}_{hp})_{\omega_z} &&\textrm{for all }\bm{\tau}_{hp}\in \Sigma_{0,hp}^z,\\
(\ddiv \bm{\zeta}_{hp}^z, q_{hp})_{\omega_z}  &= 0 &&\textrm{for all }q_{hp}\in Q_{hp}^z.
\end{align*}
For the underlying continuous problem we have the primal formulation: Find $r^z\in H^1_*(\omega_z)$ such that
\begin{align*}
(\nabla r^z,\nabla v)_{\omega_z} = -(\rot_h(\psi_z u_{hp}),\nabla v)_{\omega_z}  \quad\text{for all } v\in H^1(\omega_z).
\end{align*}
Proceeding as in \cite[Section 4.3.2]{EV2015} leads to the analogue of Lemma~\ref{lemma:efficiency},
\begin{align*}
\|\nabla r^z\|_{0,\omega_z}^2 
\lesssim \|\nabla_h(u-u_{hp})\|_{0,\omega_z}^2 + \sum_{E\in\E(z)\cap\E(\Omega)} h_E^{-1}\|\Pi_E^0\jump{u-u_{hp}}\|_{0,E}^2.
\end{align*}
Following the lines of proof of the local efficiency in \cite[Theorem 3.17]{EV2015}, yields
\begin{align*}
\|\nabla_h(u_{hp}-s_{hp})\|_{0,T}
\lesssim \sum_{z\in \N(T)}\|\rot_h(\psi_z u_{hp}) +\bm{\zeta}_{hp}^z \|_{0,\omega_z} 
\lesssim \sum_{z\in \N(T)}\|\nabla r^z\|_{0,\omega_z}.
\end{align*}
Therefore,
\begin{align*}
\|\nabla_h(u_{hp}-s_{hp})\|_{0,T}^2 &\lesssim \sum_{z\in \N(T)} \|\nabla_h(u-u_{hp})\|_{0,\omega_z}^2\\
&\quad +\sum_{z\in \N(T)} \sum_{E\in\E(z)\cap\E(\Omega)} h_E^{-1}\|\Pi_E^0\jump{u-u_{hp}}\|_{0,E}^2.
\end{align*}
Hence, due to the stability of the lifting operators in Lemma~\ref{lemma:stabilitylifting}
we can derive the following efficiency estimate for the final term in the error estimator \eqref{eq:errorestimator}.
\begin{theorem}[Potential reconstruction efficiency]\label{thm:potential_efficiency}
Let $u\in H^1(\Omega)$ be the weak solution of the Helmholtz problem \eqref{weak:formulation}, 
$u_{hp}\in V_{hp}$ be the discrete solution of \eqref{discreteproblem}, and
$s_{hp}\in H^1_*(\Omega)$ be the potential reconstruction defined as in \eqref{eq:potentialminimization}; then,
\begin{align*}
\|\G(u_{hp}) - \nabla s_{hp}\|_{0,\Omega}^2 &\lesssim \|\nabla_h (u - u_{hp})\|_{0,\Omega}^2 + \sum_{E\in\E(\Omega)} h_E^{-1}\|\Pi_E^0\jump{u_{hp}}\|_{0,E}^2 \\
&\qquad+ \sum_{E\in\E(\Omega)} \beta^2 h_E\|\p^{-1}\Pi_E^0\jump{\nabla u_{hp}}\|_{0,E}^2.
\end{align*}
\end{theorem}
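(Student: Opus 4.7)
The plan is to combine three already-assembled ingredients: the decomposition of the DG gradient in Definition~\ref{def:gradient}, the lifting stability of Lemma~\ref{lemma:stabilitylifting}, and the elementwise potential-reconstruction bound displayed just above the theorem.

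First, from \eqref{eq:gradient} I would write, on each $T\in\T$,
\[
\G(u_{hp}) - \nabla s_{hp} = \bigl(\nabla_h u_{hp} - \nabla s_{hp}\bigr) - \sum_{E\in\E(\Omega)}\mathcal{L}_E^0(\jump{u_{hp}})\big|_T - \sum_{E\in\E(\Omega)}\mathcal{L}_E^1(\jump{\nabla_h u_{hp}})\big|_T,
\]
and apply the triangle inequality. The two lifting terms are supported on patches $\omega_E$, so only the edges incident to $T$ contribute nontrivially on $T$. Lemma~\ref{lemma:stabilitylifting} then yields, after squaring and summing over $T\in\T$ (with shape regularity controlling the number of edges per triangle), precisely the jump-of-$u_{hp}$ term with weight $h_E^{-1}$ and the jump-of-$\nabla u_{hp}$ term with weight $\beta^2 h_E \p^{-2}$ on the right-hand side of the claim.

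Second, for the remaining volumetric part $\|\nabla_h u_{hp} - \nabla s_{hp}\|_{0,T}^2$, I would directly invoke the elementwise estimate already derived in the discussion preceding the theorem. It was obtained by combining the rot-based analogue of Lemma~\ref{lemma:efficiency}, the $p$-robust stability Lemma~\ref{lemma:stability}, and a partition-of-unity argument on nodal patches, and reads
\[
\|\nabla_h(u_{hp}-s_{hp})\|_{0,T}^2 \lesssim \sum_{z\in \N(T)} \|\nabla_h(u-u_{hp})\|_{0,\omega_z}^2 + \sum_{z\in \N(T)}\sum_{E\in\E(z)\cap\E(\Omega)} h_E^{-1}\|\Pi_E^0\jump{u-u_{hp}}\|_{0,E}^2.
\]

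The remaining step is bookkeeping. Summing over $T\in\T$, the finite overlap of the nodal patches $\omega_z$ (a consequence of shape regularity) absorbs the inner nodal sums into a global $\lesssim$. Since $u\in H^1(\Omega)$, the jump $\jump{u}$ vanishes on each interior edge, so $\Pi_E^0\jump{u-u_{hp}} = -\Pi_E^0\jump{u_{hp}}$, turning the edge sum into exactly the middle term of the claim (whose contribution simply merges with the one already produced by the $\mathcal{L}_E^0$-lifting). I do not expect a genuine analytical obstacle: all the heavy lifting is already contained in Lemmas~\ref{lemma:stabilitylifting} and~\ref{lemma:stability} and in the quoted rot-based efficiency bound. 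The only care required is to track the $\beta$ and $\p$-dependence through the $\mathcal{L}_E^1$ estimate and to verify that the shape-regularity overlap constants absorb cleanly into the hidden constant.
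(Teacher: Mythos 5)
Your proposal is correct and follows essentially the same route as the paper: the paper likewise splits $\G(u_{hp})-\nabla s_{hp}$ into the broken-gradient part $\nabla_h(u_{hp}-s_{hp})$ plus the two lifting contributions, bounds the liftings by Lemma~\ref{lemma:stabilitylifting}, and invokes the elementwise rot-based efficiency estimate derived just before the theorem, with $\jump{u}=0$ on interior edges converting $\jump{u-u_{hp}}$ into $-\jump{u_{hp}}$. The bookkeeping via finite patch overlap and the tracking of the $\beta$, $\p$ weights through the $\mathcal{L}_E^1$ bound are exactly as the paper intends.
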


% Final efficiency result
\subsection{Efficiency result}\label{sec:efficiency_summary}
We can now combine Theorems~\ref{thm:flux_efficiency} and~\ref{thm:potential_efficiency} to show that the complete error
indicator \eqref{eq:errorestimator} is efficient.
\begin{theorem}[Error estimator efficiency]
Let $u\in H^1(\Omega)$ be the weak solution of the Helmholtz problem \eqref{weak:formulation}, 
$u_{hp}\in V_{hp}$ be the discrete solution of \eqref{discreteproblem}, $\eta_{hp}$ be the error estimator
\eqref{eq:errorestimator}, 
$\bm{\sigma}_{hp}\in H(\ddiv;\Omega)$ be the equilibrated flux reconstruction of $u_{hp}$ defined
in \eqref{eq:equilibrated_flux}, and
$s_{hp}\in H^1_*(\Omega)$ be the potential reconstruction defined as in \eqref{eq:potentialminimization}; then,
\begin{align*}
\eta_{hp} &\lesssim  \|\nabla_h (u - u_{hp})\|_{0,\Omega} +  k^2\|u-u_{hp}\|_{0,\Omega}
 + k\|u-u_{hp}\|_{0,\partial\Omega}+\osc(f) +\osc(g)\\
 &\quad+ \|\gamma k \frac{\h}{\p}(g - \nabla_h u_{hp}\cdot\n + iku_{hp})\|_{0,\partial\Omega} 
 + \| i\gamma\frac{\sqrt{\h}}{\p}\left(g-\nabla_h u_{hp}\cdot\n+iku_{hp}\right)\|_{0,\partial\Omega} \\
  &\quad+\left(\sum_{E\in\E(\Omega)} h_E^{-1}\|\Pi_E^0\jump{u_{hp}}\|_{0,E}^2\right)^{1/2}
\!\!+ \left(\sum_{E\in\E(\Omega)} \beta^2 h_E\|\p^{-1}\Pi_E^0\jump{\nabla u_{hp}}\|_{0,E}^2\right)^{1/2}.
\end{align*}
\end{theorem}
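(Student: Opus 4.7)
The plan is to decompose $\eta_{hp}^2$ into its four natural pieces---the flux term $\sum_T\|\G(u_{hp})+\bm{\sigma}_{hp}\|_{0,T}^2$, the divergence residual, the boundary residual, and the potential term---and bound each separately. Using $(a+b+c)^2\leq 3(a^2+b^2+c^2)$ on the three pieces sharing the first sum, the problem reduces to bounding each of the four global quantities and adding them.

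The flux and potential pieces are handled directly by Theorems~\ref{thm:flux_efficiency} and~\ref{thm:potential_efficiency}. Theorem~\ref{thm:flux_efficiency} produces $\|\nabla u-\G(u_{hp})\|_{0,\Omega}$ on its right-hand side, which I would replace by the broken-gradient error $\|\nabla_h(u-u_{hp})\|_{0,\Omega}$ by adding and subtracting $\nabla_h u_{hp}$, using the definition \eqref{eq:gradient} of the DG gradient, the stability of the lifting operators from Lemma~\ref{lemma:stabilitylifting}, and the finite overlap of the patches $\omega_E$. This step delivers exactly the two $\Pi_E^0$-jump contributions present in the target estimate, while all remaining terms from Theorem~\ref{thm:flux_efficiency} already match. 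Theorem~\ref{thm:potential_efficiency} is then applied as it stands.

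The core of the proof lies in bounding the divergence and boundary residuals by $\osc(f)$ and $\osc(g)$ respectively. The key observation is that, beyond the coarse orthogonality of Lemma~\ref{lemma:fluxreconstruction}, the local construction \eqref{local_mixed} together with \eqref{eq:divzeta_f_equal} in fact yields the pointwise identities $\ddiv\bm{\zeta}_{hp}^z|_T=\Pi_T^{p_z}f^z$ and $\bm{\zeta}_{hp}^z\cdot\n|_E=\Pi_E^{p_z}g^z$. Writing $\bm{\sigma}_{hp}=\sum_z\bm{\zeta}_{hp}^z$, invoking the partition-of-unity identities $\sum_{z\in\N(T)}\psi_z\equiv 1$ and $\sum_{z\in\N(T)}\nabla\psi_z\equiv\mathbf{0}$ on each triangle, and using the explicit forms \eqref{eq:nodal_f} and \eqref{eq:nodal_g}, the $-\G(u_{hp})\cdot\nabla\psi_z$ piece of $f^z$ and the $i\gamma\tfrac{\h}{\p}$ Robin-correction piece of $g^z$ cancel in the sum over $z$, leaving the elementwise and edgewise identities
\begin{align*}
(f+k^2u_{hp}-\ddiv\bm{\sigma}_{hp})|_T&=\sum_{z\in\N(T)}(I-\Pi_T^{p_z})f^z,\\
(\bm{\sigma}_{hp}\cdot\n+G)|_E&=-\sum_{z\in\N(T_E)}(I-\Pi_E^{p_z})g^z,
\end{align*}
where $G:=g+iku_{hp}-\gamma k\tfrac{\h}{\p}(g-\nabla_h u_{hp}\cdot\n+iku_{hp})$ and $T_E$ is the unique triangle containing the boundary edge $E$. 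Weighting by $h_T^2/j_{1,1}^2$ and $C_{tr}^2 h_E$, squaring, and swapping the order of the $T$/$E$ and $z$ sums---paying only a shape-regularity constant for the bounded overlap of patches---reproduces $\sum_z\osc^2(f^z)=\osc^2(f)$ and $\sum_z\osc^2(g^z)=\osc^2(g)$.

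The main obstacle is this partition-of-unity bookkeeping: one has to verify carefully that the nonstandard Robin-type contributions built into $f^z$ and $g^z$ (which do not appear in the Poisson case treated in \cite{EV2015}) cancel when summed over the nodes of each element---including, at a boundary edge, the node opposite that edge---so that only the genuine projection defects matching the oscillation definitions remain. Once both identities are established, assembling the four contributions yields the claimed efficiency bound.
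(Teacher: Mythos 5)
Your proposal is correct and follows essentially the same route as the paper: the flux and potential pieces are dispatched by Theorems~\ref{thm:flux_efficiency} and~\ref{thm:potential_efficiency} (with $\|\nabla u-\G(u_{hp})\|_{0,\Omega}$ converted to the broken-gradient error plus the two $\Pi_E^0$-jump terms via Lemma~\ref{lemma:stabilitylifting}), and the divergence and boundary residuals are reduced to $\osc(f)$ and $\osc(g)$ through exactly the partition-of-unity cancellation and the identities $\ddiv\bm{\zeta}_{hp}^z|_T=\Pi_T^{p_z}f^z$, $\bm{\zeta}_{hp}^z\cdot\n|_E=\Pi_E^{p_z}g^z$ that the paper uses. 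Your explicit attention to the node opposite a boundary edge in the cancellation $\sum_{z\in\N(T_E)}\nabla\psi_z\cdot\n=0$ is the right subtlety to flag and matches the paper's Lemma~\ref{lemma:fluxreconstruction}.
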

\begin{proof}
The efficiency of the first and last terms of the error estimator \eqref{eq:errorestimator} are given by Theorems~\ref{thm:flux_efficiency}
and~\ref{thm:potential_efficiency}, respectively, noting that
\begin{align*}
\|\nabla u-\G(u_{hp})\|_{0,\Omega} &\leq \|\nabla_h (u - u_{hp})\|_{0,\Omega} + \left(\sum_{E\in\E(\Omega)} h_E^{-1}\|\Pi_E^0\jump{u_{hp}}\|_{0,E}^2\right)^{1/2} \\
&\quad+ \left(\sum_{E\in\E(\Omega)} \beta^2 h_E\|\p^{-1}\Pi_E^0\jump{\nabla u_{hp}}\|_{0,E}^2\right)^{1/2},
\end{align*}
due to the triangle inequality and Lemma~\ref{lemma:stabilitylifting}. Therefore, to complete the proof, we need to derive
efficiency estimates for the two remaining terms of the error
estimator \eqref{eq:errorestimator} that contain $f$ and $g$.
From the partition of unity property of the hat functions $\psi_z$, 
the definition~\eqref{eq:nodal_f},
and the property~\eqref{eq:divzeta_f_equal}, we deduce
\begin{align*}
\|f + k^2 u_{hp} -\ddiv\bm{\sigma}_{hp}\|_{0,T} 
&= \|\sum_{z\in\N(T)} (f^z-\ddiv\bm{\xi}^z_{hp})\|_{0,T}
\leq \sum_{z\in\N(T)}\|  f^z-\Pi_T^{p_z}f^z\|_{0,T}.
\end{align*}
Hence, summing over all $T\in\T$ and rearranging the summations, we arrive at
\begin{align*}
\sum_{T\in\T} \frac{h_T^2}{j_{1,1}^2}\|f + k^2 u_{hp} -\ddiv\bm{\sigma}_{hp}\|^2_{0,T} 
\leq \osc^2(f).
\end{align*}
Similarly, for any boundary edge $E\in\E(\partial\Omega)$ with associated element $T_E\in\T$, we get that
\begin{align*}
&\|\bm{\sigma}_{hp}\cdot\bm{n} + g + iku_{hp} - \gamma k \frac{\h}{\p}(g - \nabla_h u_{hp} \cdot \n + iku_{hp})\|_{0,E}\\
&\quad= \|\sum_{z\in\N(T_E)} (\bm{\xi}^z_{hp}\cdot\bm{n}-g^z)\|_{0,E}
\leq \sum_{z\in\N(T_E)}\| g^z-\Pi_E^{p_z}g^z\|_{0,E},
\end{align*}
which proves
\begin{align*}
\sum_{E\in\E(\partial\Omega)} C_{tr}^2h_E\|\bm{\sigma}_{hp}\cdot\bm{n} + g + iku_{hp} - \gamma k \frac{\h}{\p}(g - \nabla_h u_{hp} \cdot \n + iku_{hp})\|^2_{0,E}
\leq \osc^2(g).
\end{align*}
Combining these results completes the proof.
\end{proof}

\begin{remark}\label{remark_choice_pz}
In the numerical experiments we will be interested in the efficiency
index $\eta_{hp}/\|\nabla u - \G(u_{hp})\|_{0,\Omega}$ to be close to
one. While we have proven asymptotic reliability of $\eta_{hp}$ 
%to $\|\nabla u - \G(u_{hp})\|_{0,\Omega}$
for any $p_z\geq1$, the efficiency of the flux reconstruction in Theorem~\ref{thm:flux_efficiency} also depends on the efficiency of the data terms.
Hence, for $\osc(f)$, and $\osc(g)$ to be comparably small, one has to at least match the polynomial degree $p_z$ of the Raviart-Thomas finite element space to that of $u_{hp}\psi_z$, i.e.,  $p_z = \max_{T\in\T(z)} p_T+1$, for the flux reconstruction.
On the contrary, the efficiency of the potential reconstruction in Theorem~\ref{thm:potential_efficiency} does not depend on the
data oscillations; therefore, the choice $p_z = \max_{T\in\T(z)} p_T$ is sufficient in order to match the (local) polynomial degree of $u_{hp}\psi_z$ in the local minimization problems \eqref{eq:potentialminimization:nodal}.
\end{remark}

%%%%%%%%%%%%
%% Numerical examples
%%%%%%%%%%%%%%
\section{Numerical results}\label{sec:numerics}

In this section we present numerical results for four different benchmark
problems.
\par
For efficiency of the error estimator, we approximate the local mixed problems with Raviart-Thomas
finite elements of (varying) order $p_z = \max_{T\in\T(z)} p_T+1$ for the flux reconstruction
and $p_z = \max_{T\in\T(z)} p_T$ for the potential reconstruction, cf. Remark~\ref{remark_choice_pz}. 
To reduce the ill-conditioning of the basis for the local problems with high $p_z$,
we use the hierarchical basis functions for the Raviart-Thomas finite element space presented in \cite{BPZ2012}.
In comparison to a standard residual a posteriori error estimator, the computation of the flux and potential reconstructions 
is more costly; however, since all local problems are independent, they can be solved in parallel.
In addition, the number of local problems is independent of the polynomial degree.
\par
We compare $p$- and adaptive $h$-refinement to an adaptive $hp$-refinement strategy.
For the $hp$-refinement we use the decision mechanism
of Melenk \& Wohlmuth \cite[Algorithm 4.4]{MW2001} outlined in Algorithm~\ref{algo:refinement}, which determines for $h$- or $p$-refinement on the refinement level $\ell$
based on verifying the decay of the local error indicators.
We choose the constants $\gamma_h=4$, $\gamma_p=0.4$, $\gamma_n=1$, and the initial values $\eta_{T,0}^{pred}=\infty$, for all $T\in\T$.
Hence, the algorithm prefers $p$- over $h$-refinement in the first step.
For the mesh refinement, we use the newest vertex bisection algorithm, and 
mark elements based on the maximum marking strategy with parameter $0.75$.
\begin{algorithm}[t]
\begin{algorithmic}[1] 
\caption{$hp$-refinement algorithm.}\label{algo:refinement}
    	\If{$T$ is marked for refinement}
        	\If{$\eta_{T,\ell} > \eta_{T,\ell}^{\mathrm{pred}}$}
            	\State Perform $h$--refinement: Subdivide $T$ into $2$ children $T_\pm$, and set
            	\State $(\eta_{T_\pm,\ell+1}^{\mathrm{pred}})^2\gets \frac{1}{2}\gamma_h \left(\frac{1}{2}\right)^{p_T} \eta_{T,\ell}^2$.  	      	\Else
    	        \State Perform $p$--refinement: $p_T \gets p_T+1$
        	    \State $(\eta_{T,\ell+1}^{\mathrm{pred}})^2\gets \gamma_p \eta_{T,\ell}^2$        
        	\EndIf
    	\Else
        	\State $(\eta_{T,\ell+1}^{\mathrm{pred}})^2\gets \gamma_n (\eta_{T,\ell}^{\mathrm{pred}})^2$
    	\EndIf
\end{algorithmic}
\end{algorithm}
\par

In order to shorten the pre-asymptotic region, we choose the initial mesh size $h$ and (uniform) polynomial degree $p$ as
\begin{align}\label{eq:choice}
  p = \lceil\ln(k)\rceil
  \qquad\text{and}\qquad
  \frac{kh}{p} \leq C_{\mathrm{res}},
\end{align}
where the resolution constant $C_{\mathrm{res}}$ depends on the problem under consideration; cf. \cite[Section 5]{SZ2015}. 
Additional numerical experiments, in which the initial conditions \eqref{eq:choice} are violated, are also
presented in the following in order to demonstrate that the method under consideration is able to escape the pre-asymptotic region
regardless of the initial mesh.

For the DG formulation we choose the parameters
$\alpha = 10$, $\beta = 1$, and $\gamma = 1/4$.

\subsection{Square domain}\label{example:square}
\begin{figure}[tbp]
\centering
\includegraphics[width=0.49\textwidth]{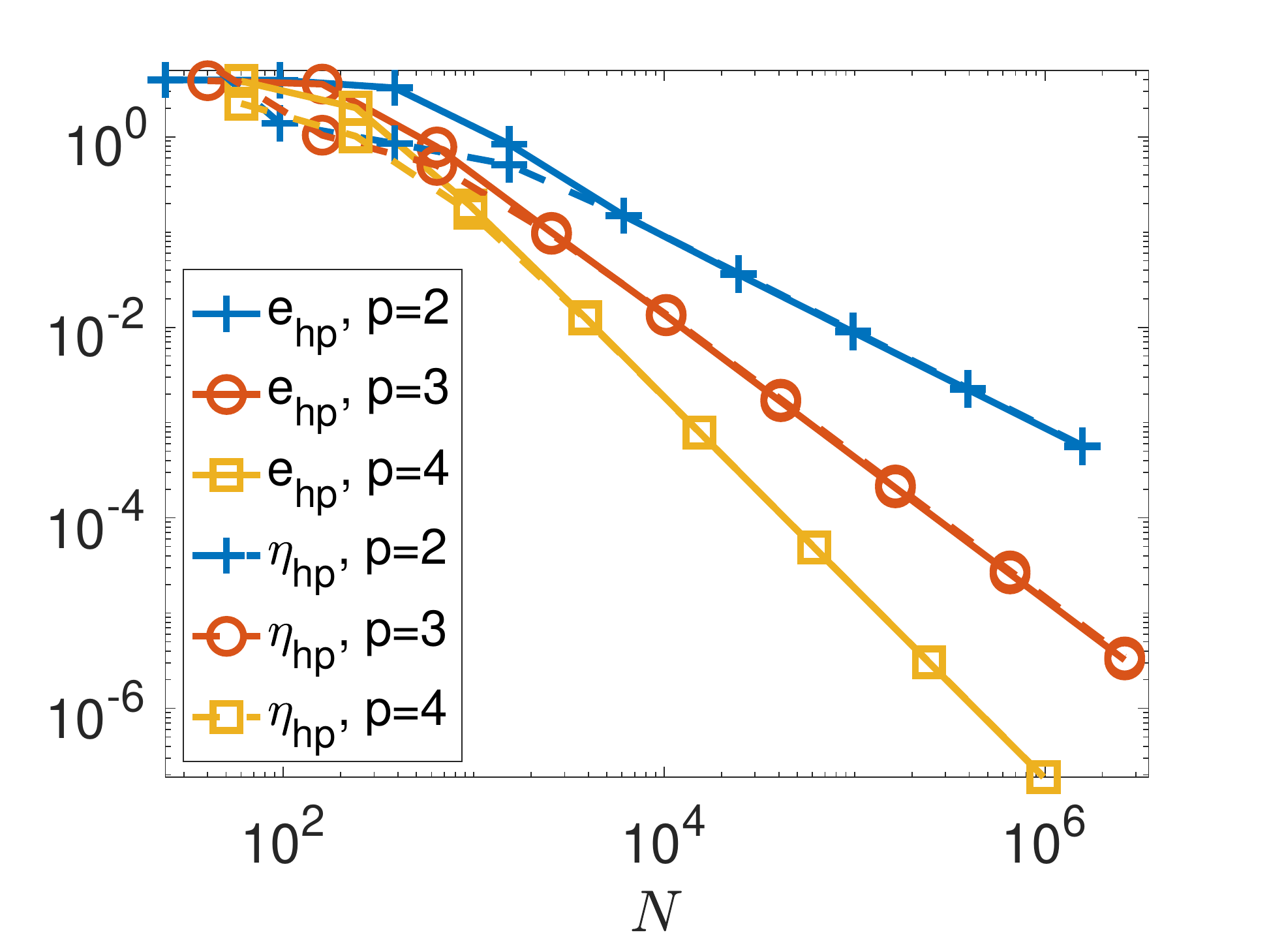}
\includegraphics[width=0.49\textwidth]{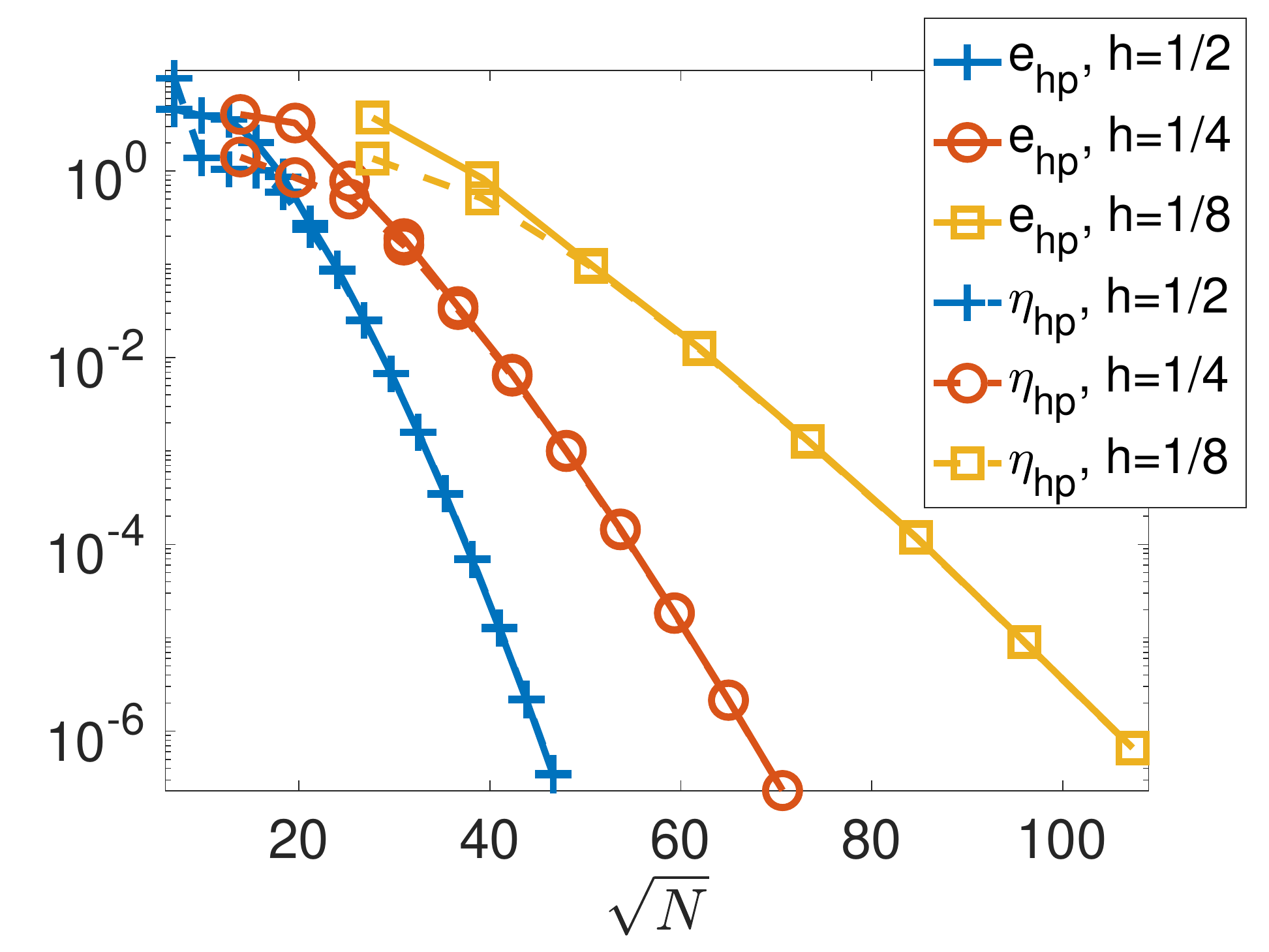}
\caption{Algebraic convergence  $\mathcal{O}(h^p)$ for the $h$-version (left) and exponential convergence of the $p$-version (right) of the error $e_{hp}=\|\nabla u - \G(u_{hp})\|_{0,\Omega}$ and the estimator $\eta_{hp}$ for the example in Section~\ref{example:square} with $k=20$.}
\label{fig:Hankel_uniform_H_P}
\end{figure}
\begin{figure}[tbp]
\centering
\includegraphics[width=0.49\textwidth]{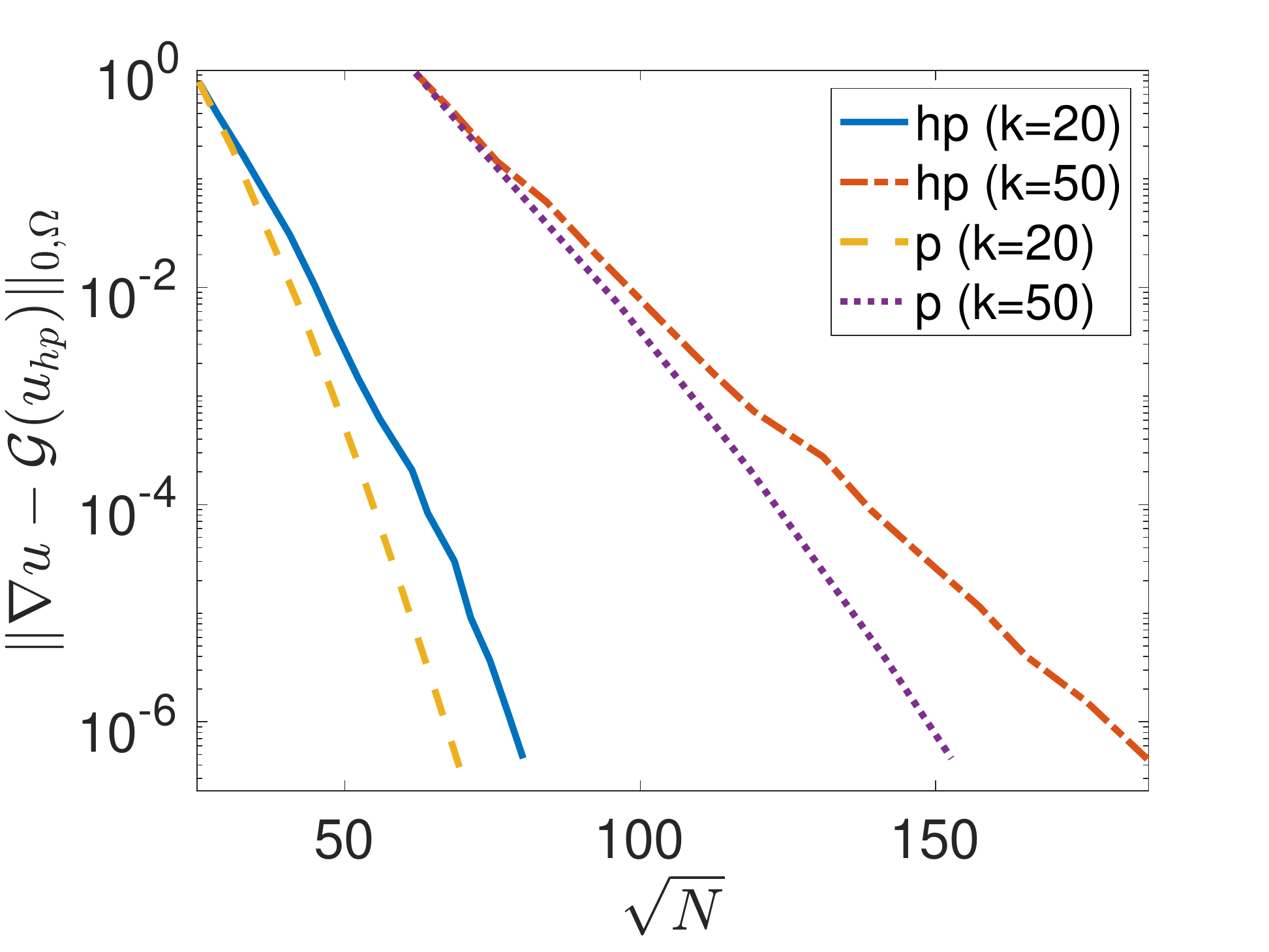}
\includegraphics[width=0.49\textwidth]{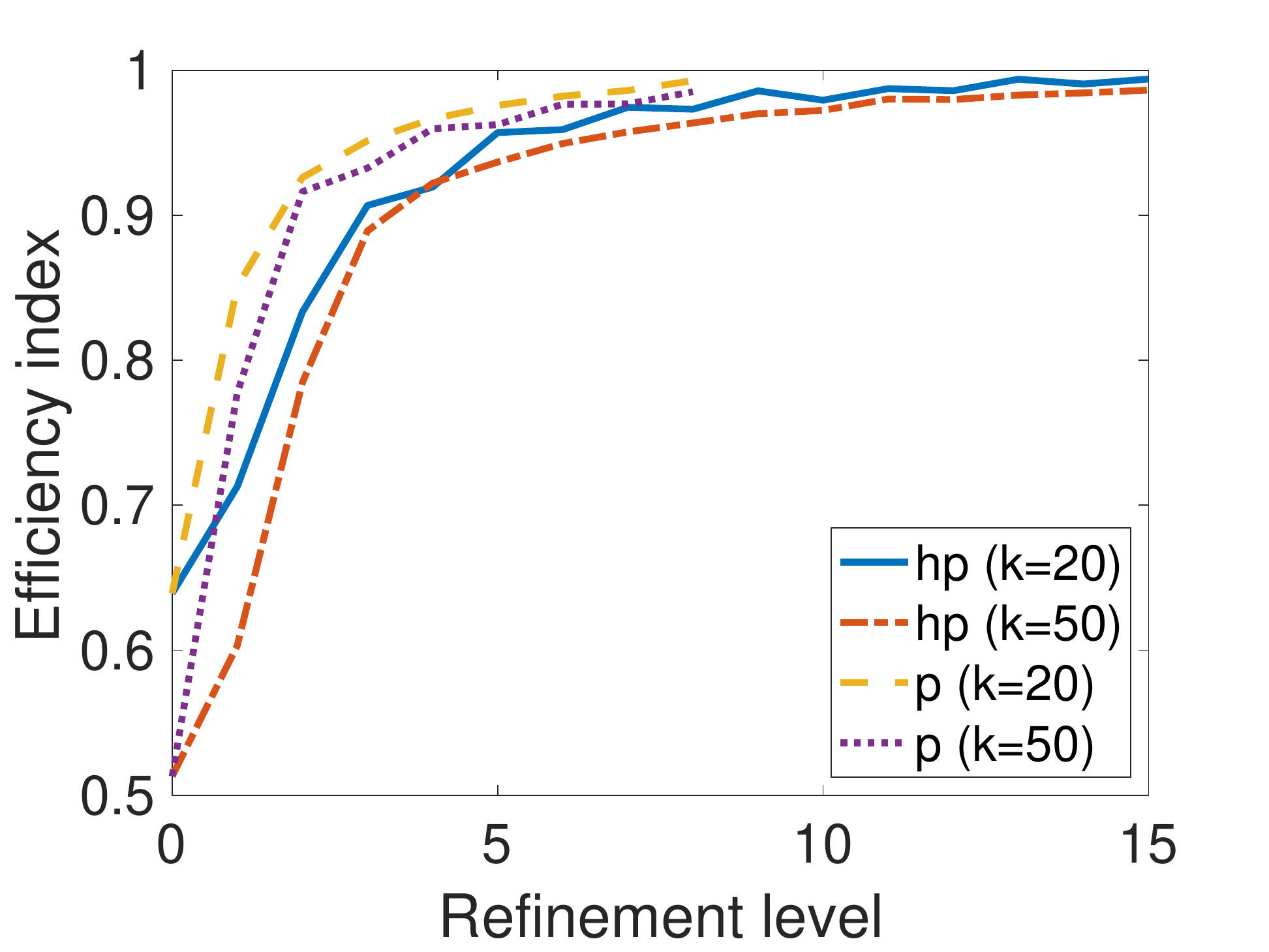}
\caption{Exponential convergence (left) and efficiency indices (right) of the $p$- and $hp$-version for the example in Section~\ref{example:square}.}
\label{fig:Hankel}
\end{figure}
\begin{figure}[tbp]
\centering
\includegraphics[width=0.49\textwidth]{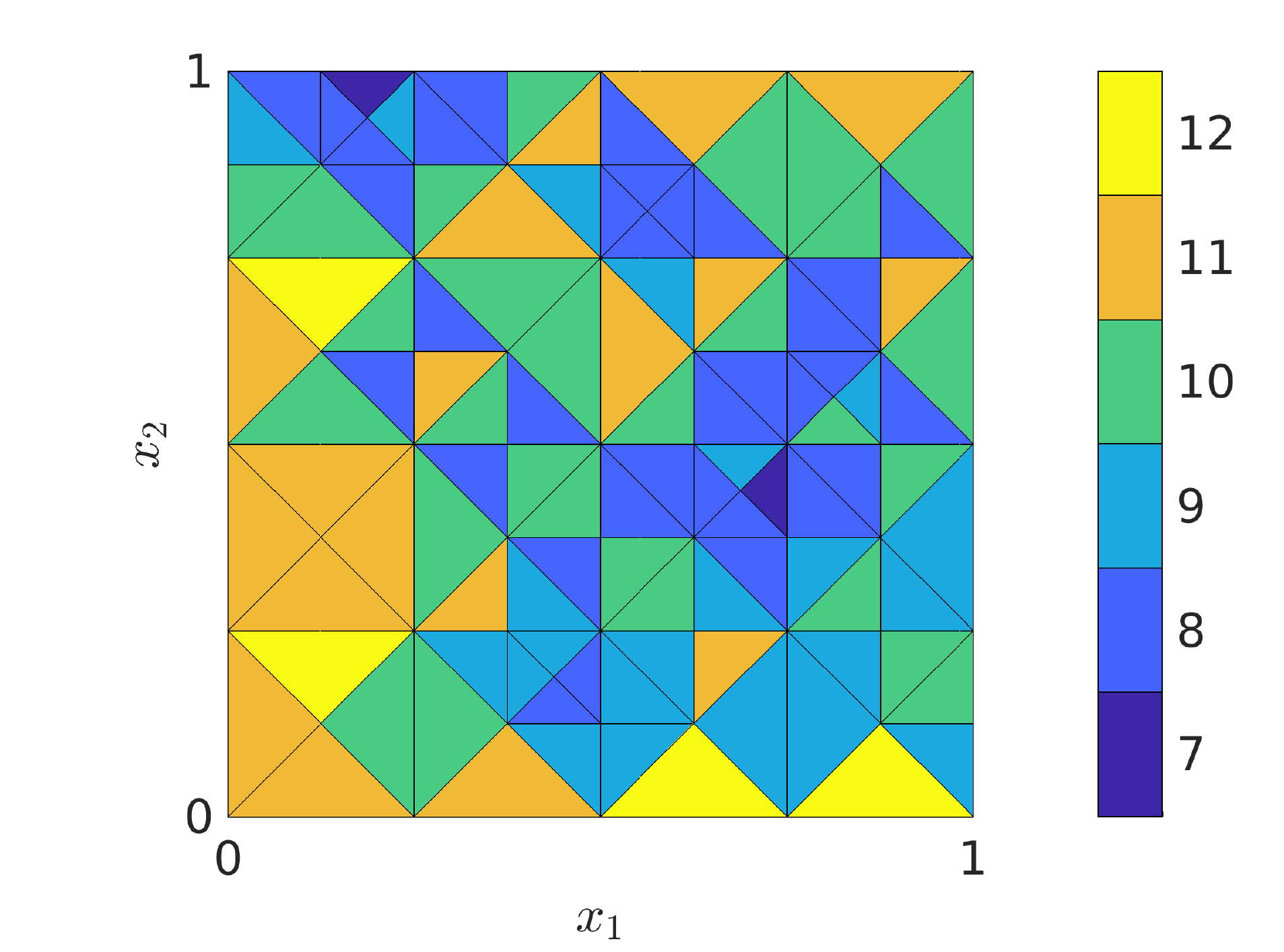}
\includegraphics[width=0.49\textwidth]{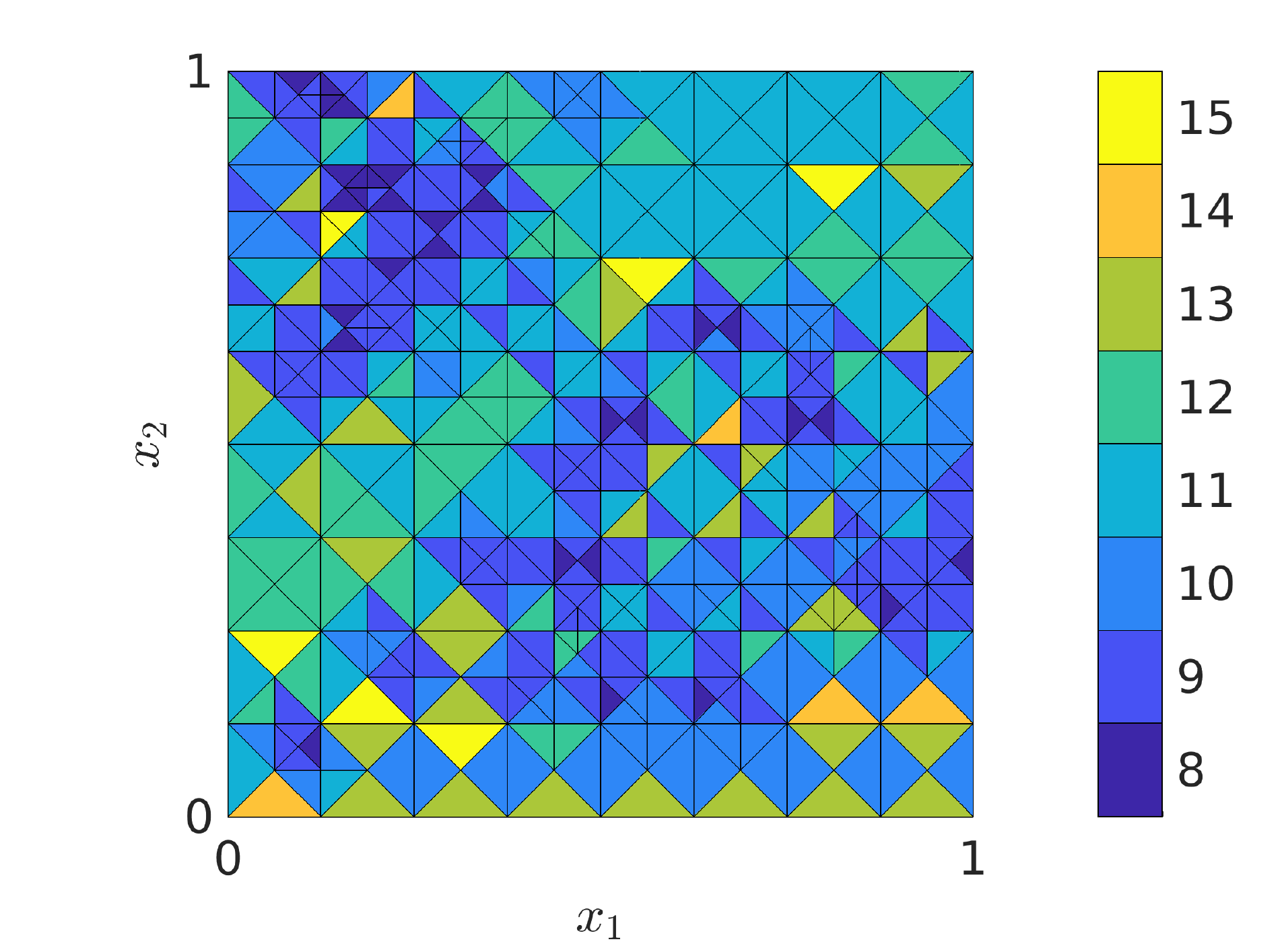}
\caption{$hp$-refined mesh for the example in Section~\ref{example:square} with $k=20$ (left) and $k=50$ (right),
where the polynomial degree is indicated with different shading.}
\label{fig:mesh:Hankel}
\end{figure}
\begin{figure}[tbp]
\centering
\includegraphics[width=0.49\textwidth]{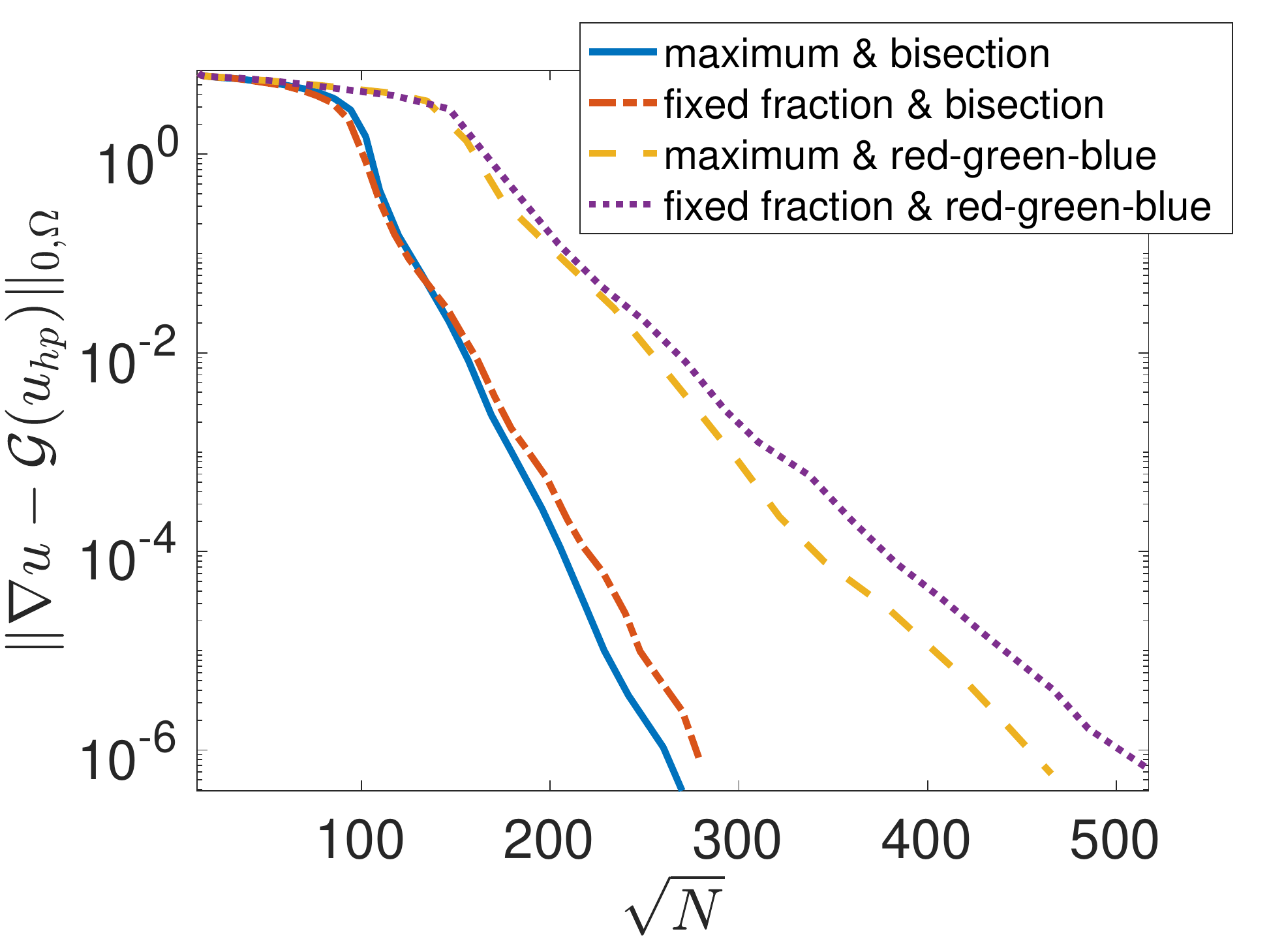}
\includegraphics[width=0.49\textwidth]{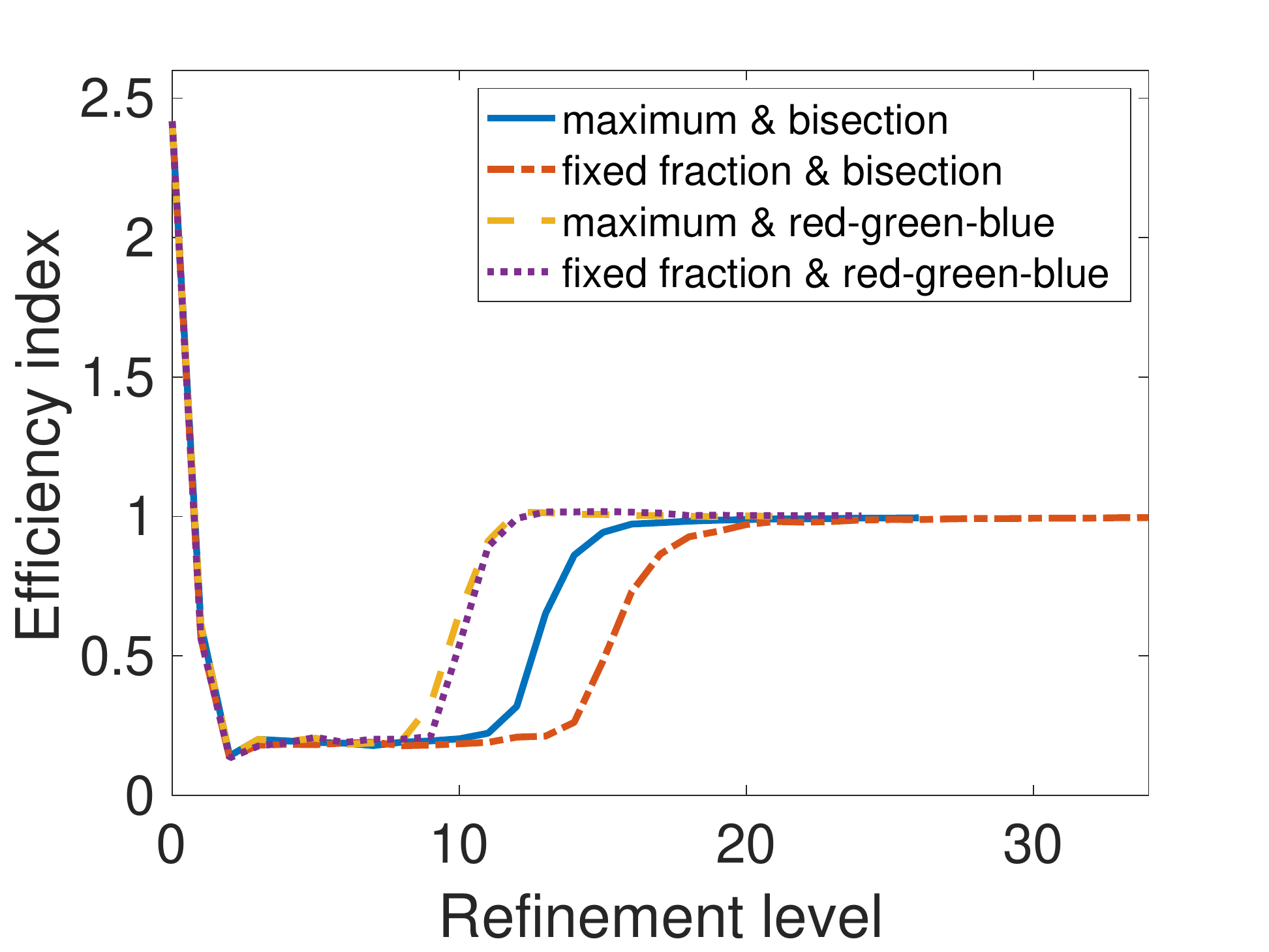}
\caption{Exponential convergence (left) and efficiency indices (right) for different $hp$-refinement 
strategies for the example in Section~\ref{example:square} with $k=50$.}
\label{fig:HankelStrategies}
\end{figure}
Let $\Omega=(0,1)^2$, $f=0$, and select $g$ such that the solution of \eqref{weak:formulation} is given by
\begin{align*}
u(x) = \mathcal{H}^{(1)}_0\left( k\sqrt{(x_1+1/4)^2+x_2^2}\right),
\end{align*}
where $\mathcal{H}^{(1)}_0$ denotes the zeroth order Hankel function of the first kind.
In Figure~\ref{fig:Hankel_uniform_H_P}, we observe convergence of $\mathcal{O}(h^p)$, $p=2,3,4$, for uniform $h$-refinement, and exponential convergence for uniform $p$-refinement.
Note that the error estimator is very close to the error and the corresponding lines are in fact overlapping.
In Figure~\ref{fig:Hankel} we observe exponential convergence of the error $\|\nabla u - \G(u_{hp})\|_{0,\Omega}$ for both $p$- and adaptive $hp$-refinement for wavenumbers $k=20,50$. For $hp$- and $p$-refinement we observe efficiency indices $\eta_{hp}/\|\nabla u - \G(u_{hp})\|_{0,\Omega}$ asymptotically close to $1$. 
For the construction of the initial mesh, we choose $C_{\mathrm{res}}=2$.
Note that for coarse $h$ and $p$ the pollution error is still dominant and, hence, $\eta_{hp}$
underestimates the error $\|\nabla u - \G(u_{hp})\|_{0,\Omega}$; therefore, the efficiency indices are initially
less than one.
The final $hp$-refined mesh for $k=20,50$ are displayed in Figure~\ref{fig:mesh:Hankel}.
\par
Next, we compare four variants of the $hp$-refinement strategy for $k=50$.
First, instead of bisecting a triangle into just two new triangles we divide it into four using the red-green-blue-refinement strategy \cite[Section 4.1]{RV}.
The effect of this is a more aggressive $h$-refinement. Note that our implementation of mesh refinement
is based on conforming refinements, which means that there also occurs additional refinement 
to remove hanging nodes. This overhead is significantly larger for red-green-blue-refinement than for refinement based on bisection.
The second variation is to use a fixed fraction marking strategy instead of the maximum marking strategy, where 25\% of the elements with the
largest indicators are refined. This leads to a more aggressive refinement between two consecutive
levels. 
In Figure~\ref{fig:HankelStrategies}, we observe that less $h$-refinement is more effective; hence, the errors corresponding to refinement by bisection
lead to less degrees of freedom than those corresponding to red-green-blue-refinement  for the same level of accuracy.
The two different marking strategies lead to comparable errors in this smooth example.
We draw the conclusion that even though in principle we are using the same $hp$-marking strategy, 
the actual performance of the method depends significantly on the concrete implementation.
Note that, for this experiment, the choice of the initial values $p=1$ and $h=1/4$ violates both conditions in \eqref{eq:choice}, and, neglecting the first mesh, 
the error is initially underestimated.
Nevertheless, all strategies are capable of eventually refining enough so that the pollution error becomes sufficiently small, 
such that the efficiency indices are asymptotically close to 1 for all four strategies.

\subsection{L-shaped domain}\label{example:lshape}
\begin{figure}[tbp]
\centering
\includegraphics[width=0.49\textwidth]{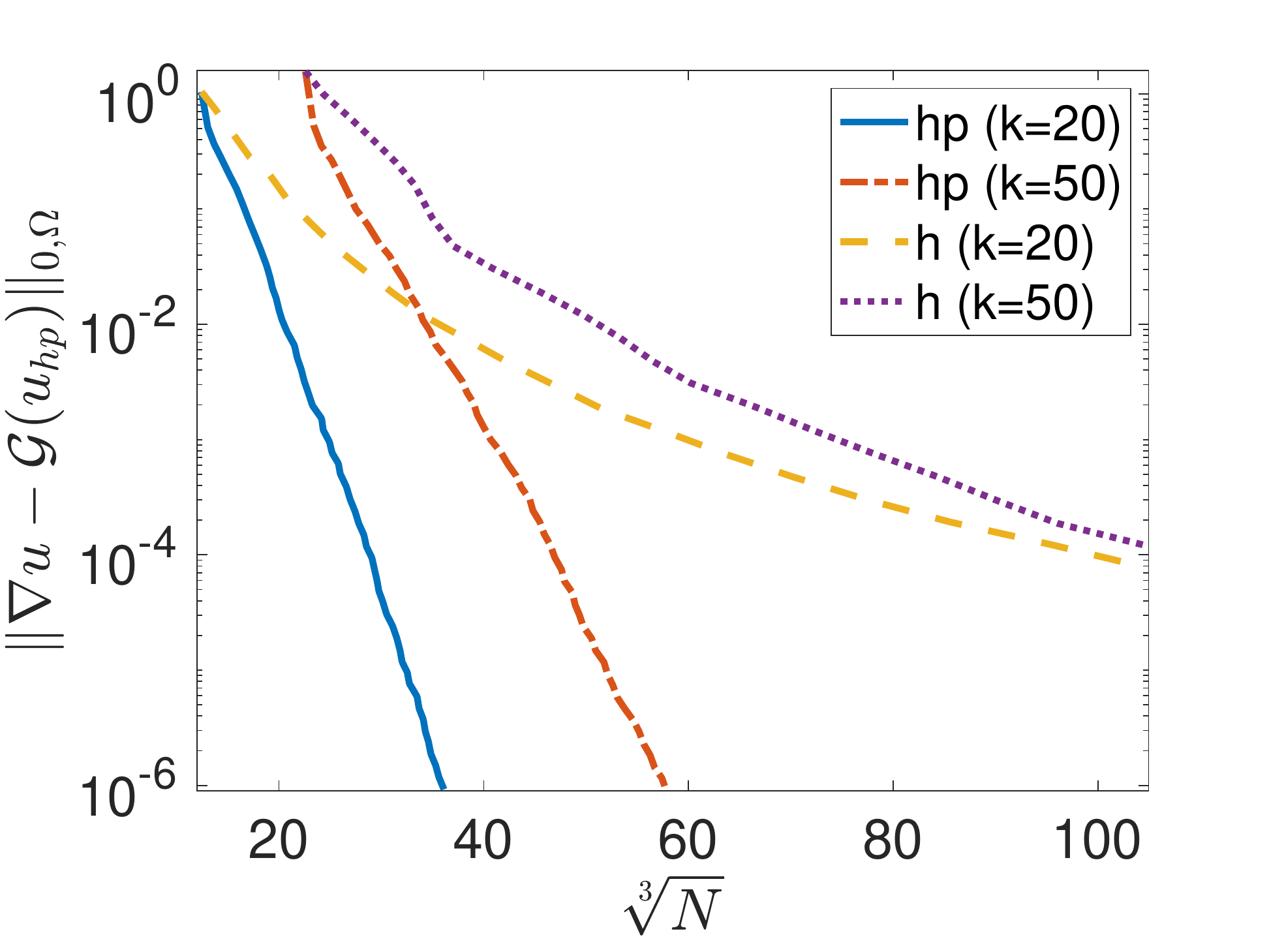}
\includegraphics[width=0.49\textwidth]{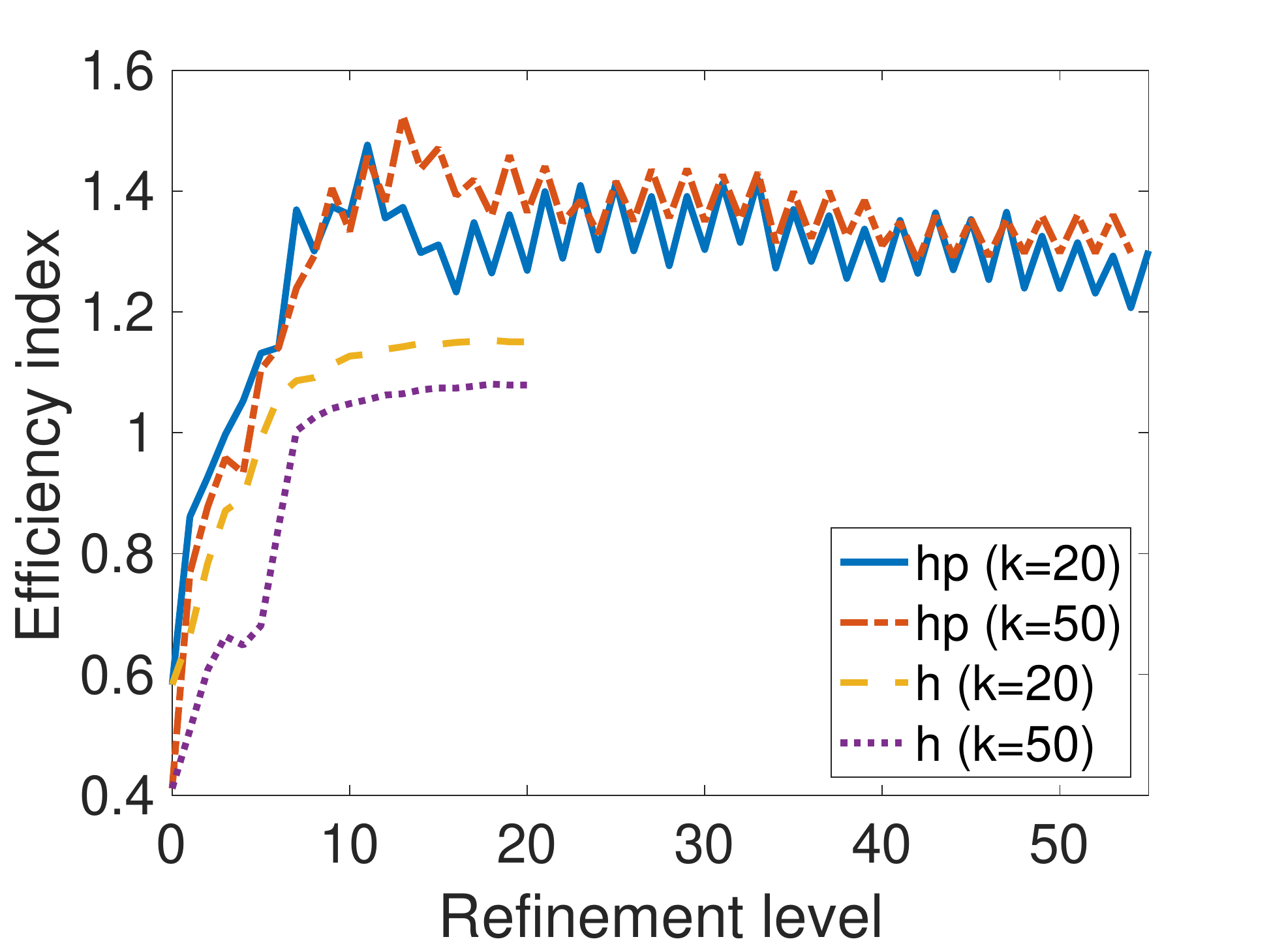}
\caption{Exponential convergence (left) and efficiency indices (right) of the $h$- and $hp$-version  for the example in Section~\ref{example:lshape}.}
\label{fig:Lshape}
\end{figure}
\begin{figure}[tbp]
\centering
\includegraphics[width=0.49\textwidth]{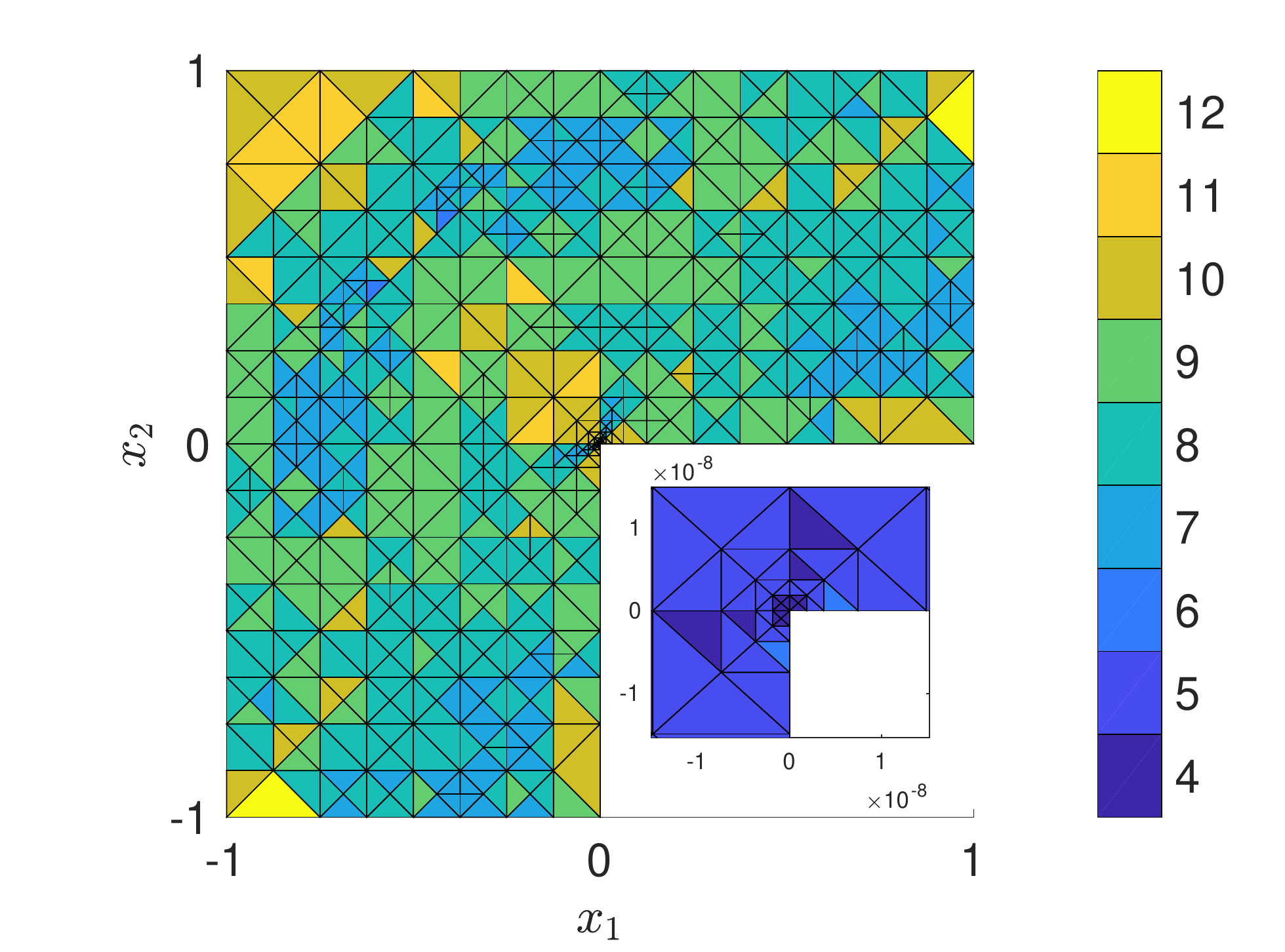}
\includegraphics[width=0.49\textwidth]{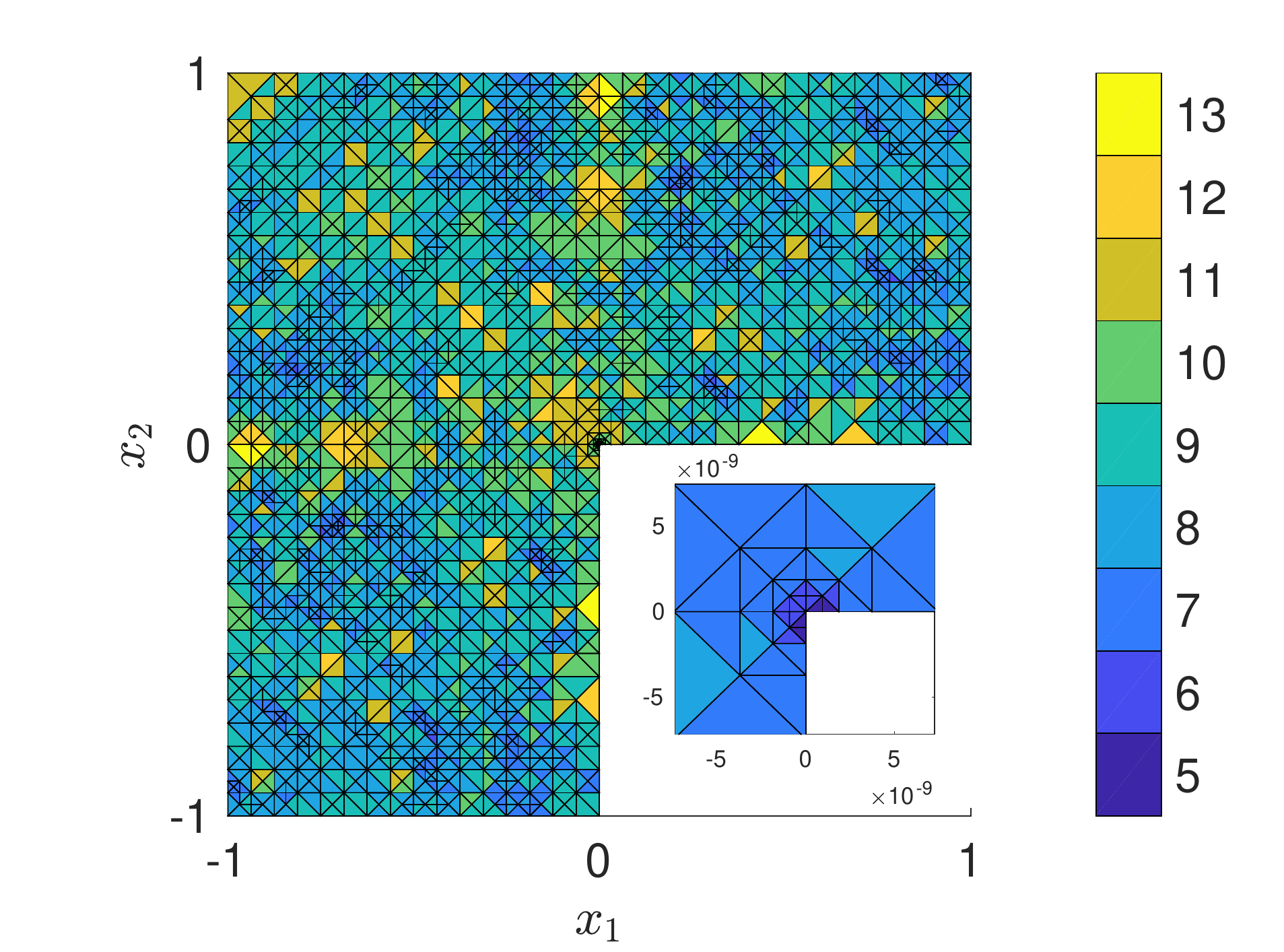}
\caption{$hp$-refined mesh for the example in Section~\ref{example:lshape} with $k=20$ (left) and $k=50$ (right),
where the polynomial degree is indicated with different shading.}
\label{fig:mesh:Lshape}
\end{figure}
\begin{figure}[tbp]
\centering
\includegraphics[width=0.49\textwidth]{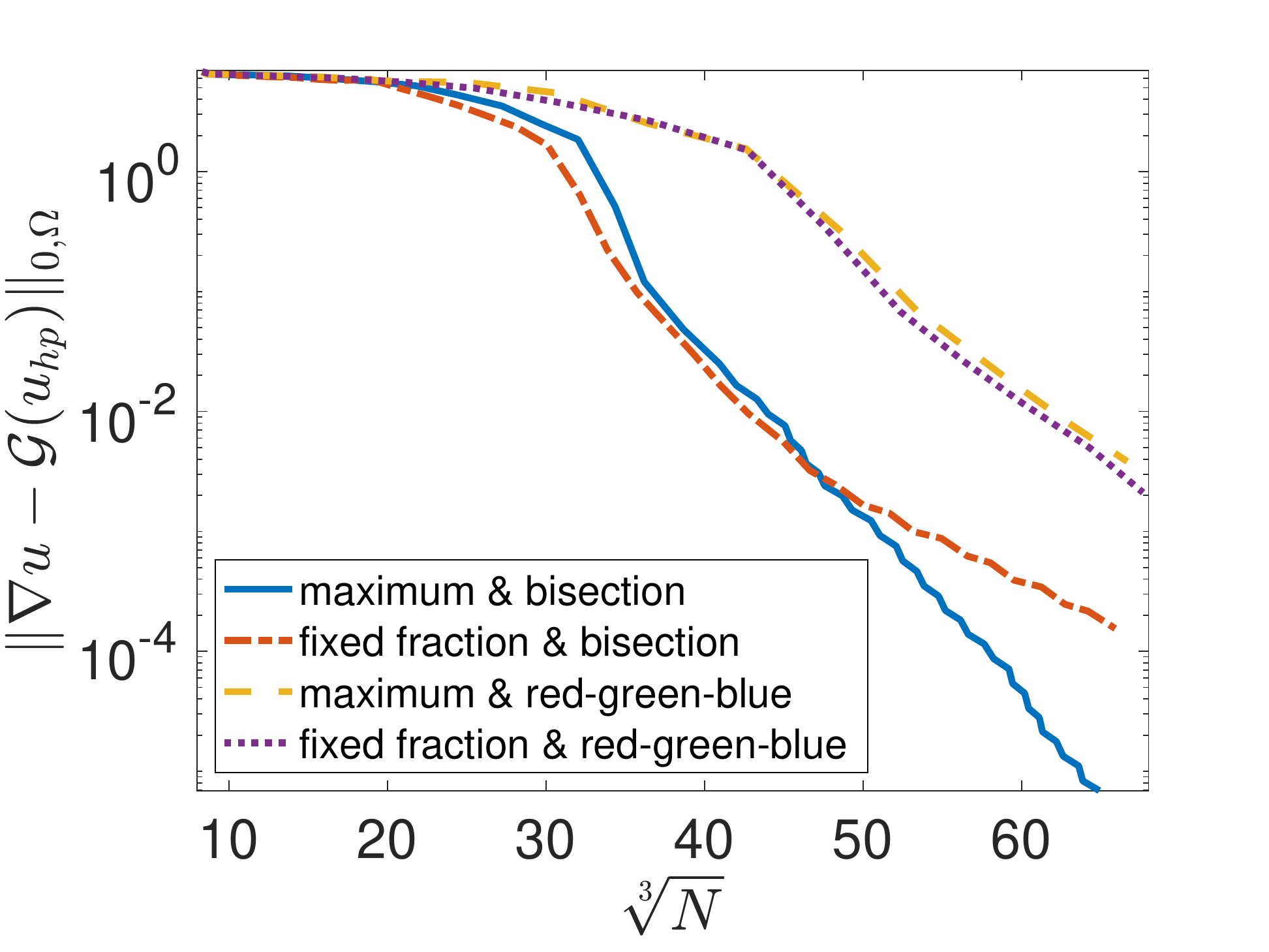}
\includegraphics[width=0.49\textwidth]{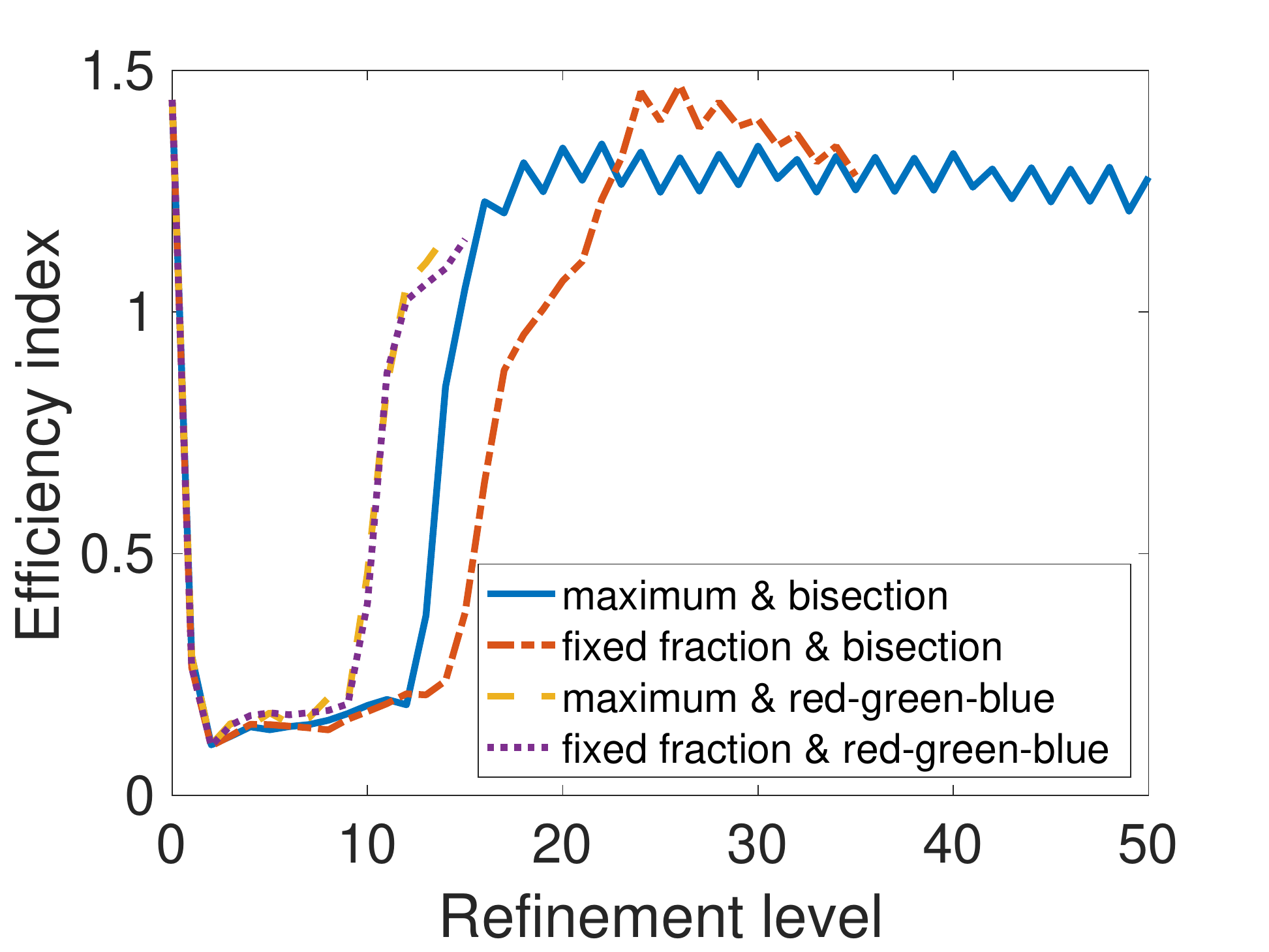}
\caption{Exponential convergence (left) and efficiency indices (right) of different $hp$-refinement 
strategies for the example in Section~\ref{example:lshape} with $k=50$.}
\label{fig:LshapeStrategies}
\end{figure}
\begin{figure}[tbp]
\centering
\includegraphics[width=0.49\textwidth]{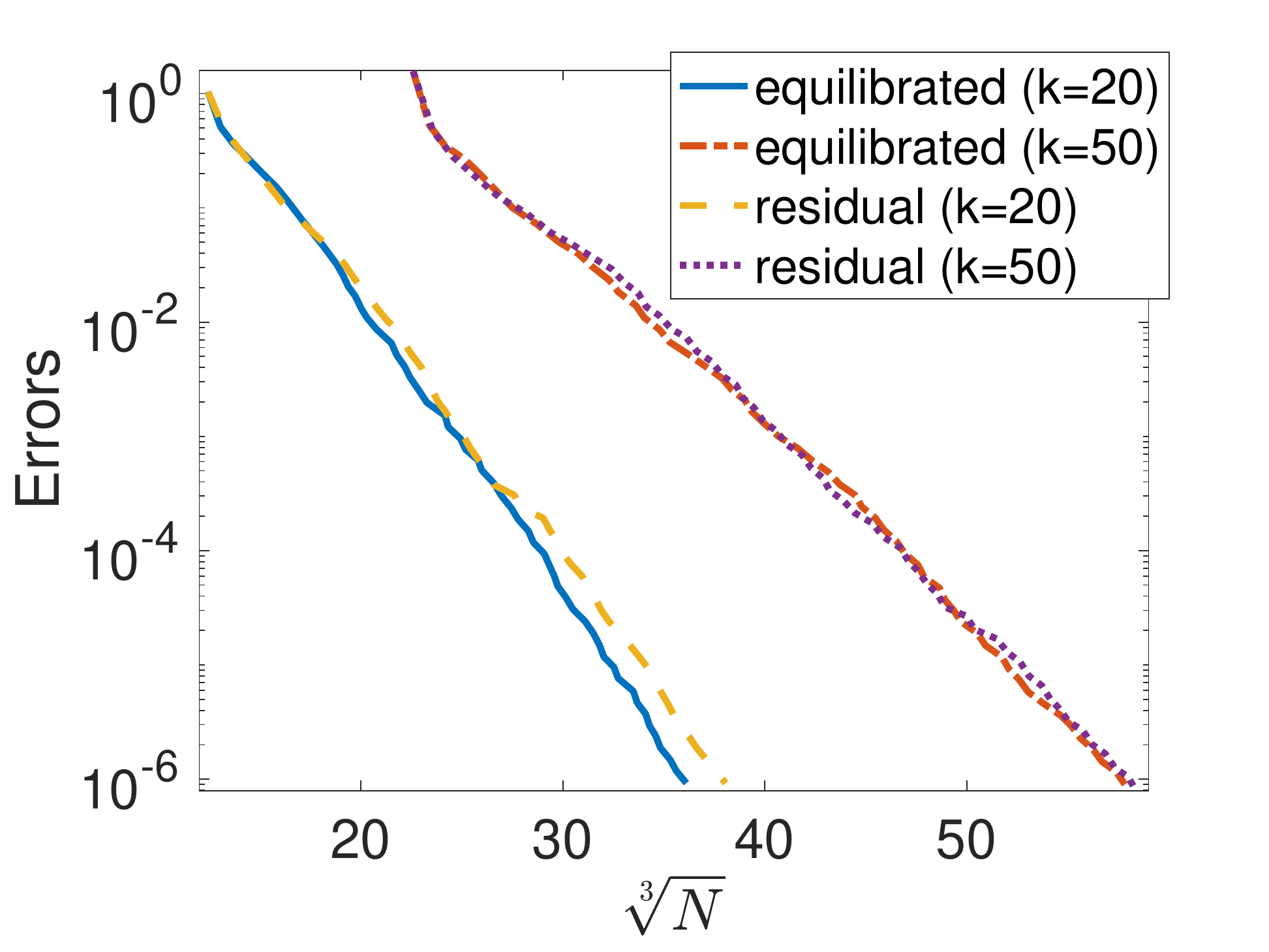}
\includegraphics[width=0.49\textwidth]{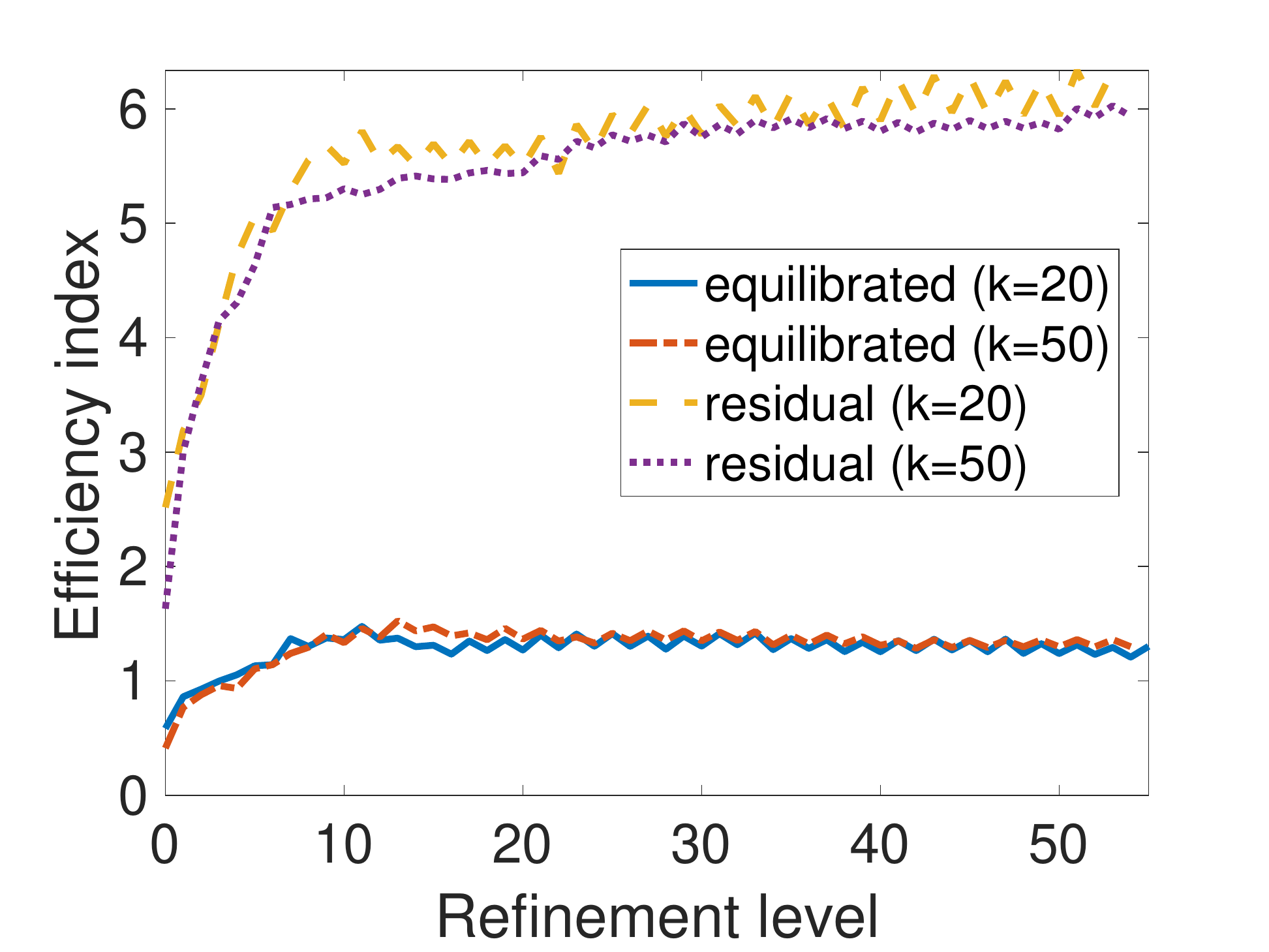}
\caption{Exponential convergence (left) and efficiency indices (right) of the $hp$-version for the equilibrated a posteriori error estimator in comparison to the residual a posteriori error estimator for the example in Section~\ref{example:lshape}.}
\label{fig:LshapeResidual}
\end{figure}
Let $\Omega = (-1,1)^2\backslash((0,1)\times(-1,0))$, $f=0$, and select $g$ such that the solution of \eqref{weak:formulation} is given 
in polar coordinates $(r,\varphi)$ by
\begin{align*}
 u(r,\varphi) = \mathcal{J}_{2/3}(kr)\sin(2\varphi/3),
\end{align*}
where $\mathcal{J}_{2/3}$ denotes the Bessel function of first kind. Note that the gradient of $u$ is singular at the origin; therefore,
adaptive mesh refinement towards the origin is needed.
Notice that we approximate~\eqref{problem} with
$g=\nabla u\cdot\n-iku$, and $g$ is singular at $(0,0)$. As $g$
enters~\eqref{local_mixed} as an essential boundary condition, we use
a high polynomial degree for the flux reconstruction in the patch
associated with $(0,0)$.
\par
For the initial mesh refinement we choose $C_{\mathrm{res}}=2$.
In Figure~\ref{fig:Lshape} we observe algebraic convergence of adaptive $h$-refinement and 
exponential convergence of adaptive $hp$-refinement for $k=20,50$. 
More precisely, the convergence for $hp$-refinement is of the form $\exp(-b\sqrt[3]{N})$, and we
observe that $b\approx 0.61$ for $k=20$ and $b\approx0.41$ for $k=50$.
This indicates that the rate of exponential convergence deteriorates for larger $k$.
We note that higher wavenumbers result in more $h$-refinement,
cf. Figure~\ref{fig:mesh:Lshape}, 
and 
smaller elements generally result in a slower rate of exponential convergence, cf. Figure~\ref{fig:Hankel_uniform_H_P}. Therefore, we cannot
conclude whether the rate deterioration is a direct result of the
increased wavenumber, or is due to the increased $h$-refinement, which 
results from the higher wavenumber.
In all cases, the efficiency indices are asymptotically close to 1.
\par
Figure~\ref{fig:mesh:Lshape} displays the final $hp$-refined mesh for $k=20,50$. Note that the displayed zoom at the
re-entrant corner shows low polynomial degrees close to the origin.
\par
For comparison, in Figure~\ref{fig:LshapeStrategies} we compare the four variants of $hp$-refinement
described in the previous example. We observe again that the fewer $h$-refinements performed by bisection
is advantageous over the larger $h$-refinements performed by red-green-blue-refinement. For higher accuracy, the maximum marking strategy appears to be more effective than the fixed fraction marking strategy. In all four cases, the efficiency indices are asymptotically close to 1 and the four $hp$-strategies are all able to overcome the pre-asymptotic region even when starting from $p=1$ and $h=1/4$.
\par
In Figure~\ref{fig:LshapeResidual}, we compare the equilibrated a posteriori error estimator to the residual a posteriori error
estimator \cite{SZ2015}
\begin{align*}
\eta_{hp,residual}^2
&=  \sum_{T\in\T} \frac{h_T^2}{p_T^2}\|\Delta u_{hp} + k^2 u_{hp} + f\|_{0,T}^2
+ \sum_{E\in\E(\Omega)} \beta\frac{\h}{\p}\|\jump{\nabla_h u_{hp}}\|_{0,E}^2\\
&\quad+ \sum_{E\in\E(\Omega)} \alpha\frac{\p^2}{\h}\|\jump{u_{hp}}\|_{0,E}^2
+\sum_{E\in\E(\partial\Omega)} \h \| g-\nabla_h u_{hp}\cdot \bm{n} + iku_{hp}\|_{0,E}^2.
\end{align*}
We observe that the error $\|\nabla_h(u-u_{hp})\|_{0,\Omega}$ for the residual a posteriori error estimator 
is very close to the error $\|\nabla u-\G(u_{hp})\|_{0,\Omega}$ for the equilibrated a posteriori error estimator.
In fact, in the case of $k=50$ both errors overlap. The difference, however, is in the efficiency. 
The efficiency indices for the equilibrated a posteriori error estimator are asymptotically close
to 1 and robust in $p$; by contrast, the efficiency indices for the residual a posteriori error estimator 
are close to 5 and show a small but persistent growth in $p$.

\subsection{Internal reflection/refraction}\label{example:reflect}
\begin{figure}[tbp]
\centering
\includegraphics[width=0.49\textwidth]{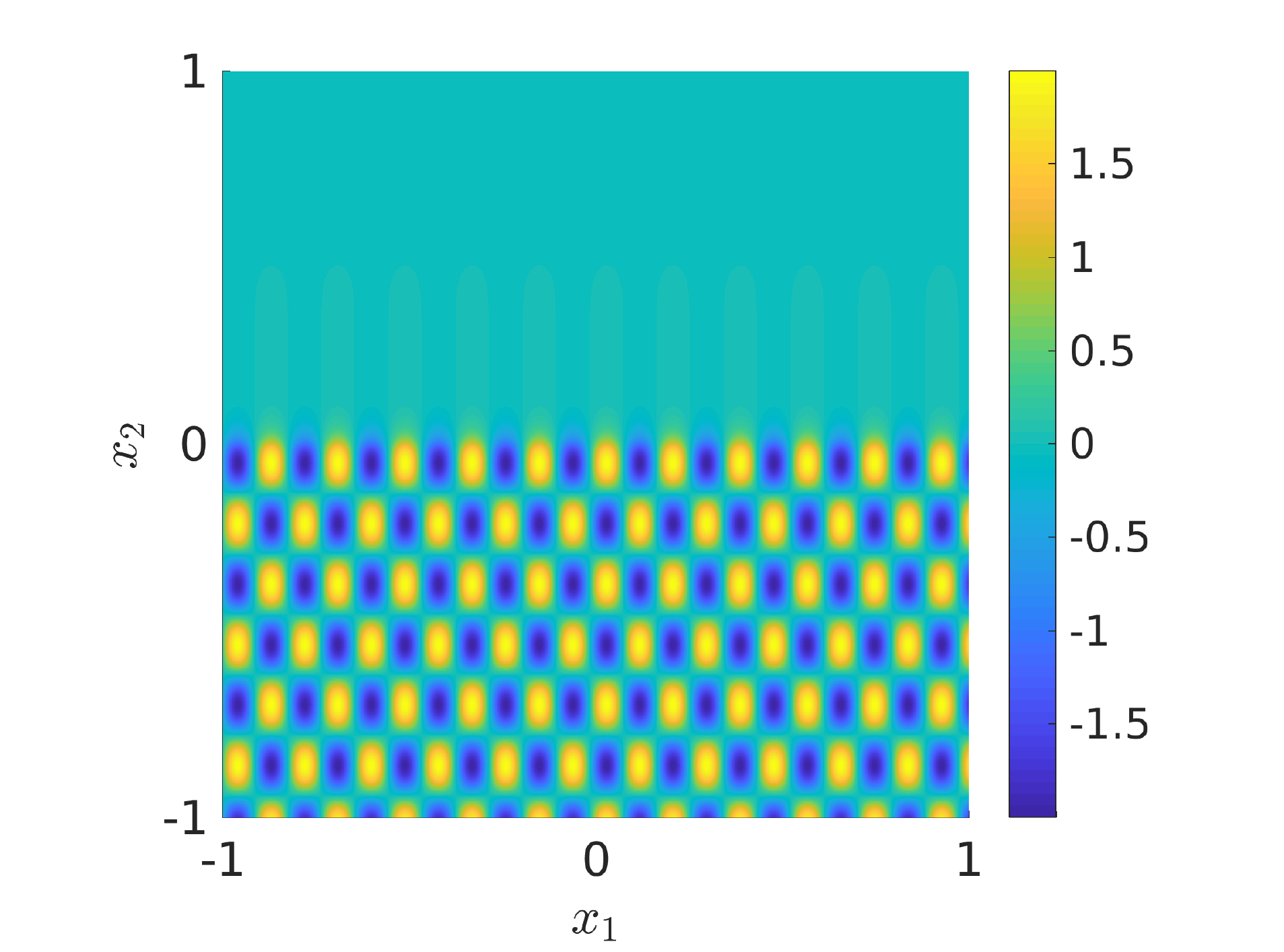}
\includegraphics[width=0.49\textwidth]{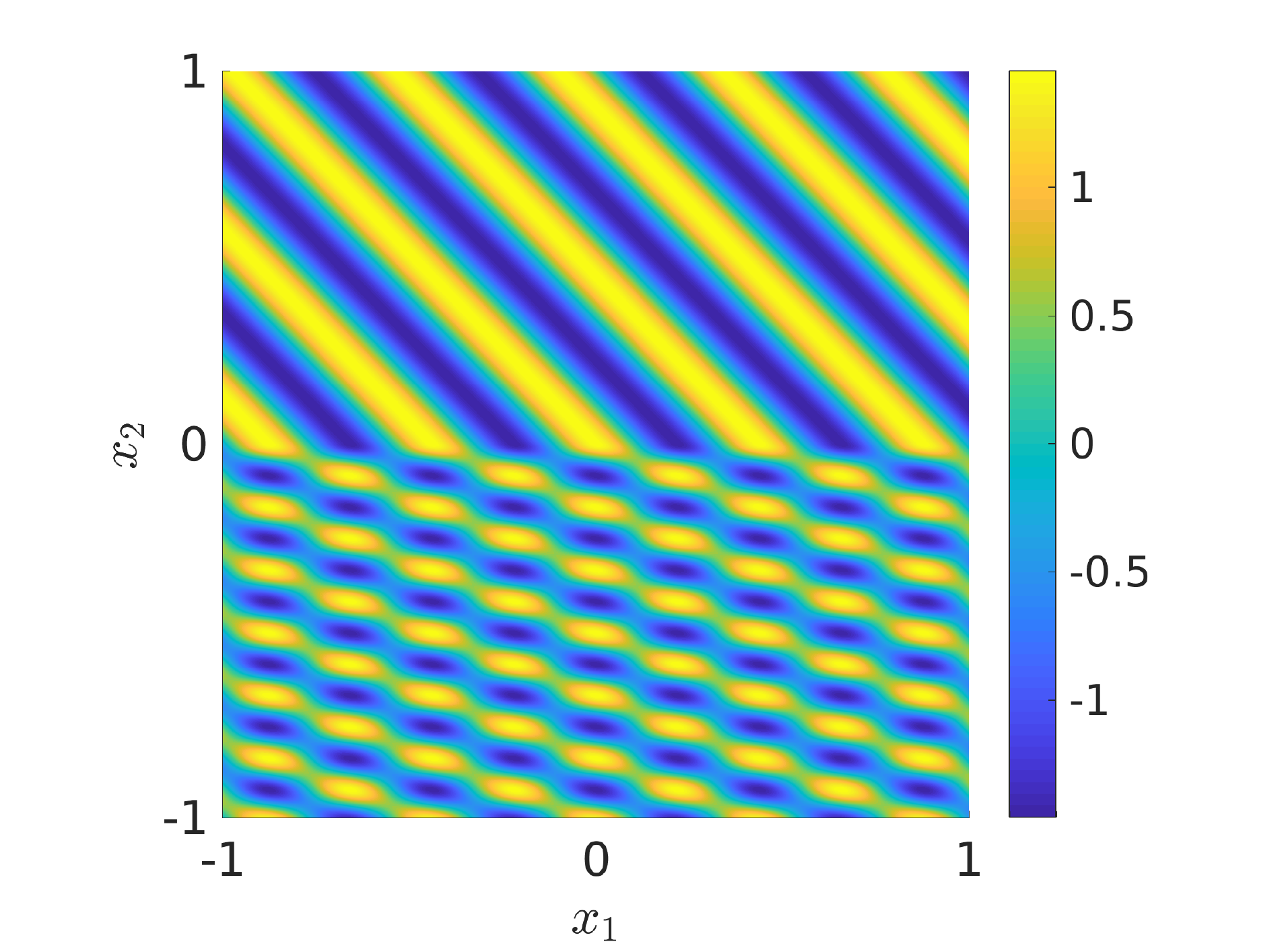}
\caption{Real parts of solutions with $29^\circ$ reflection (left) and $69^\circ$  refraction (right) for $k=20$.}
\label{fig:solution}
\end{figure}
\begin{figure}[tbp]
\centering
\includegraphics[width=0.49\textwidth]{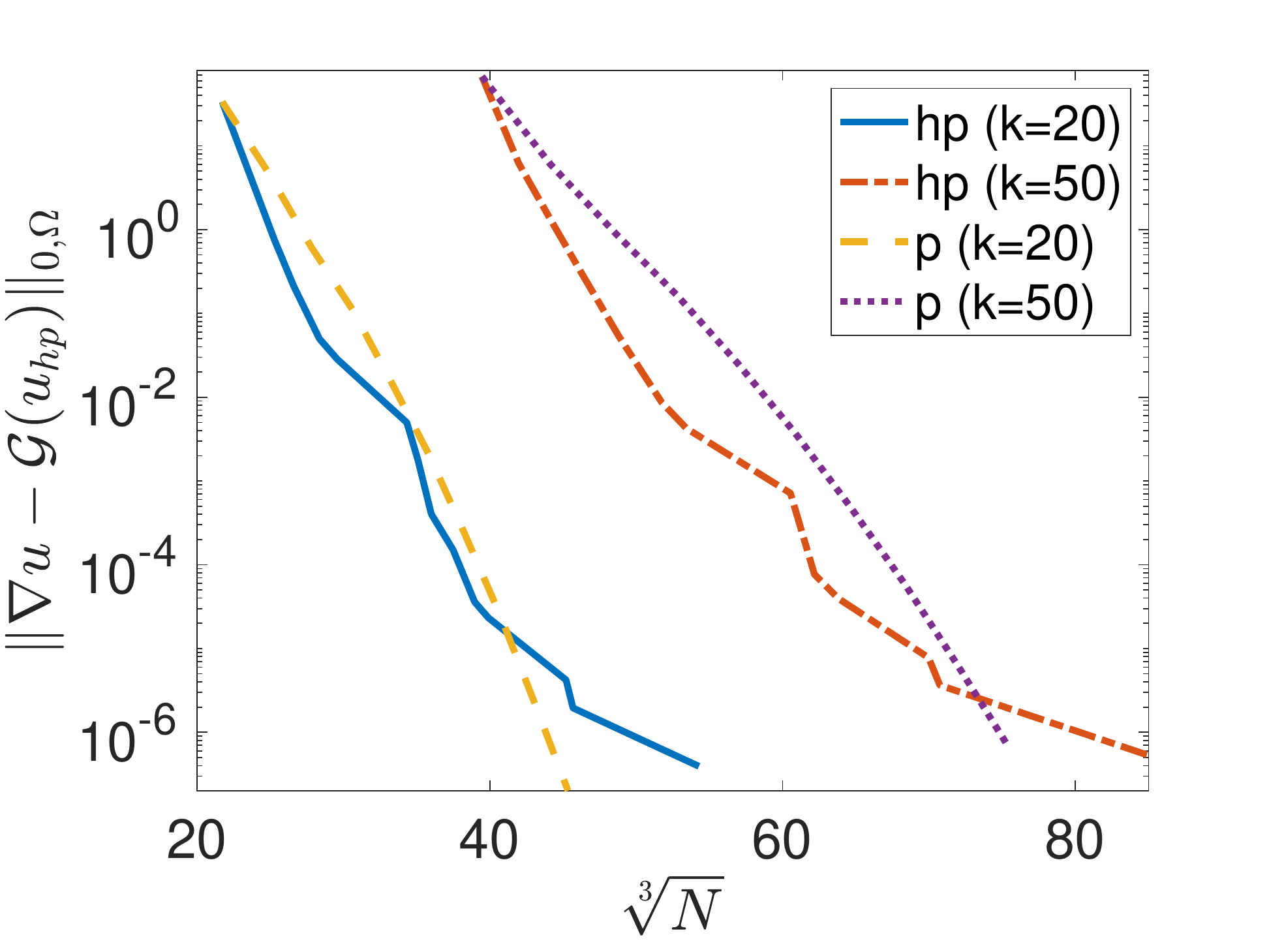}
\includegraphics[width=0.49\textwidth]{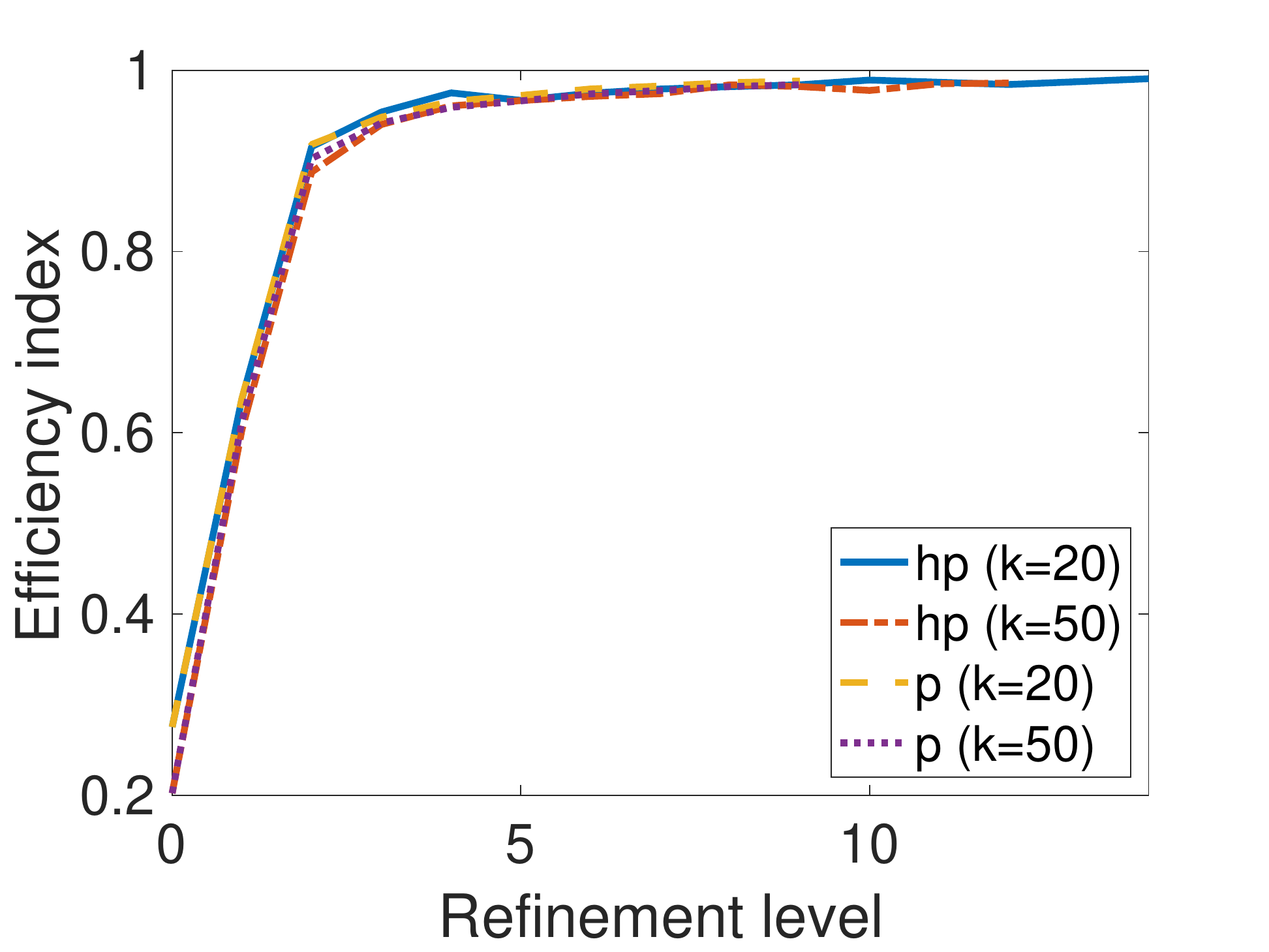}
\caption{Exponential convergence (left) and efficiency indices (right) of the $p$- and $hp$-version  for the example in Section~\ref{example:reflect} with $29^\circ$ reflection.}
\label{fig:29}
\end{figure}
\begin{figure}[tbp]
\centering
\includegraphics[width=0.49\textwidth]{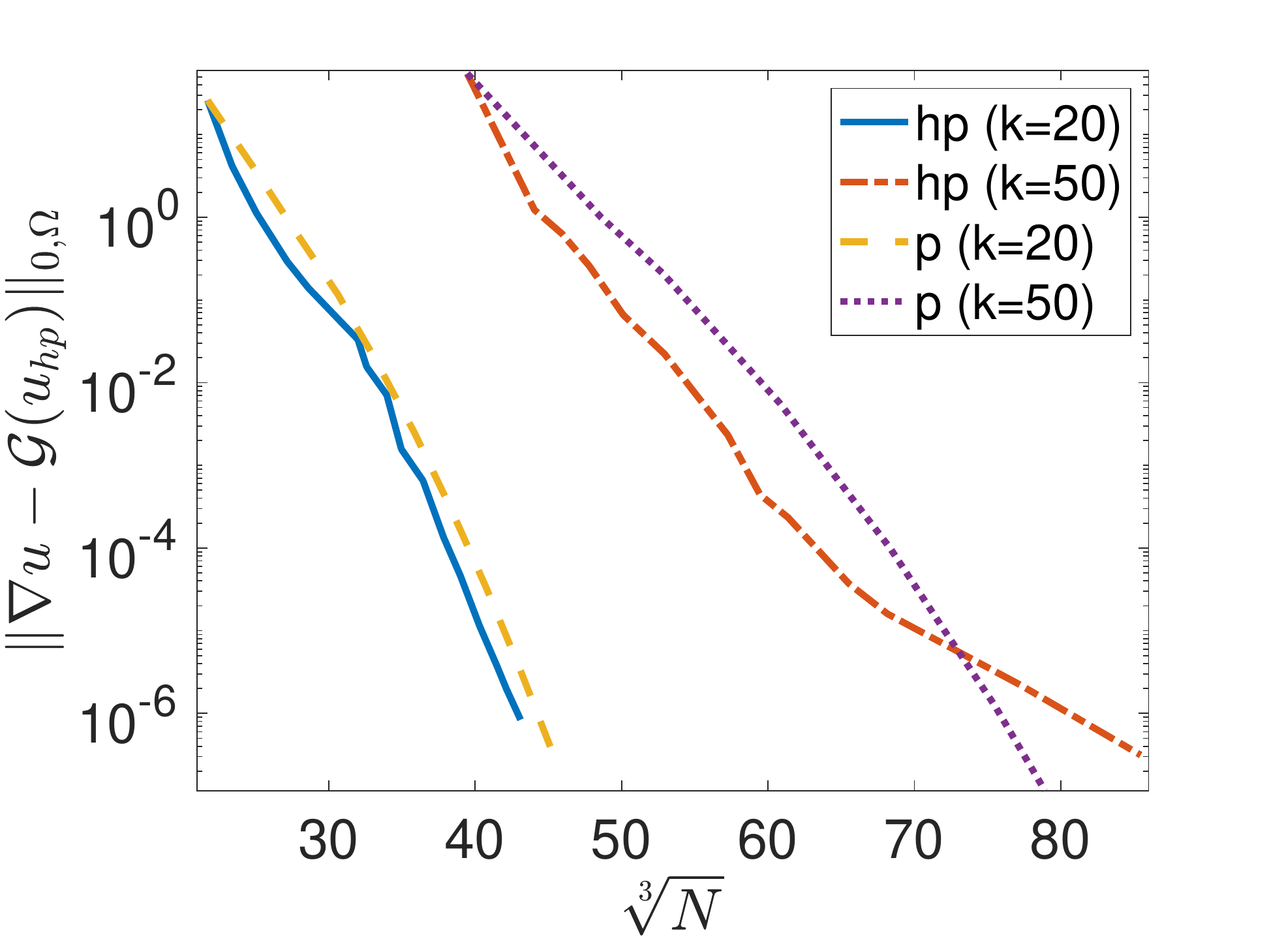}
\includegraphics[width=0.49\textwidth]{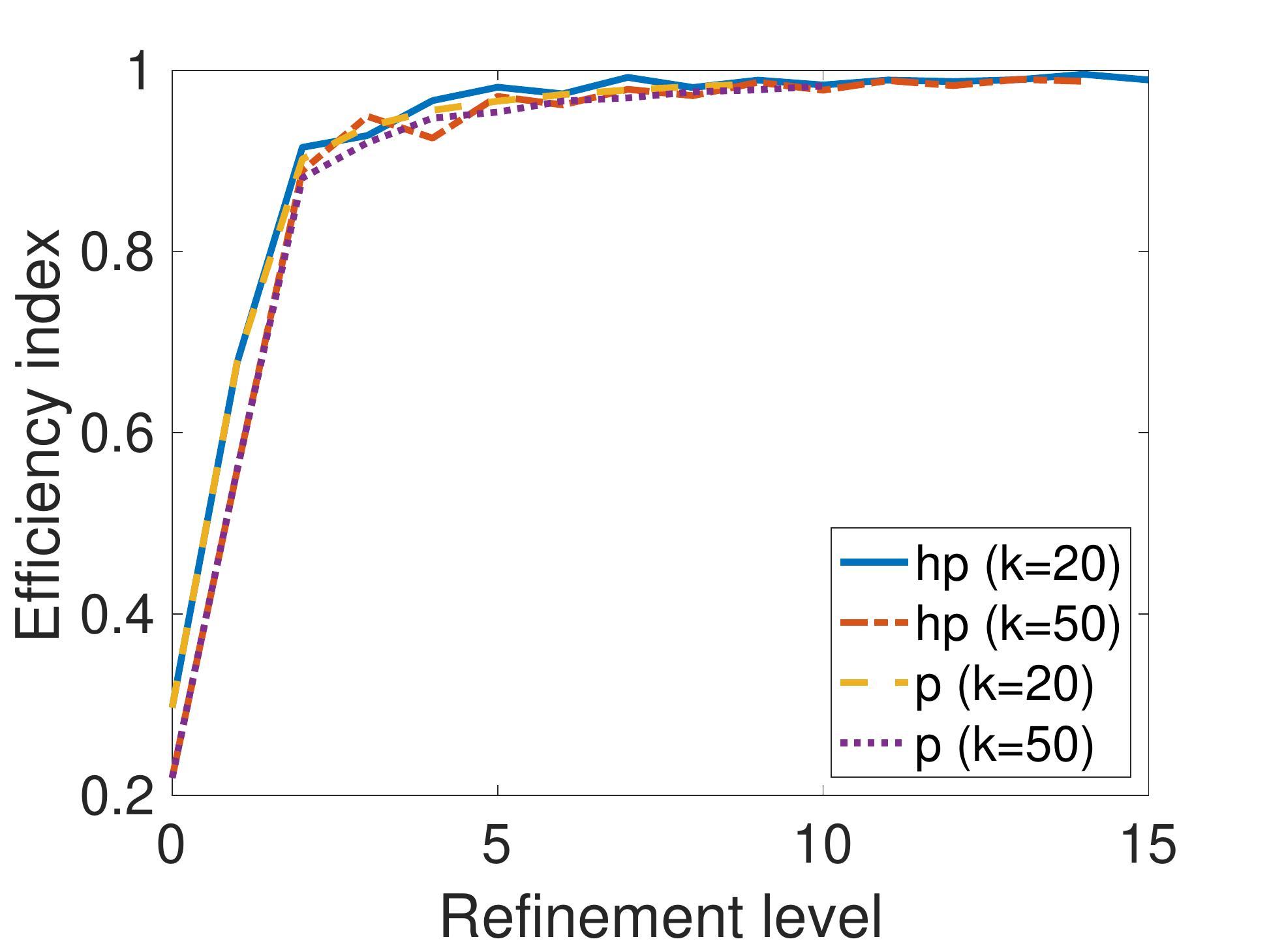}
\caption{Exponential convergence (left) and efficiency indices (right) of the $p$- and $hp$-version  for the example in Section~\ref{example:reflect} with $69^\circ$ refraction.}
\label{fig:69}
\end{figure}
\begin{figure}[tbp]
\centering
\includegraphics[width=0.49\textwidth]{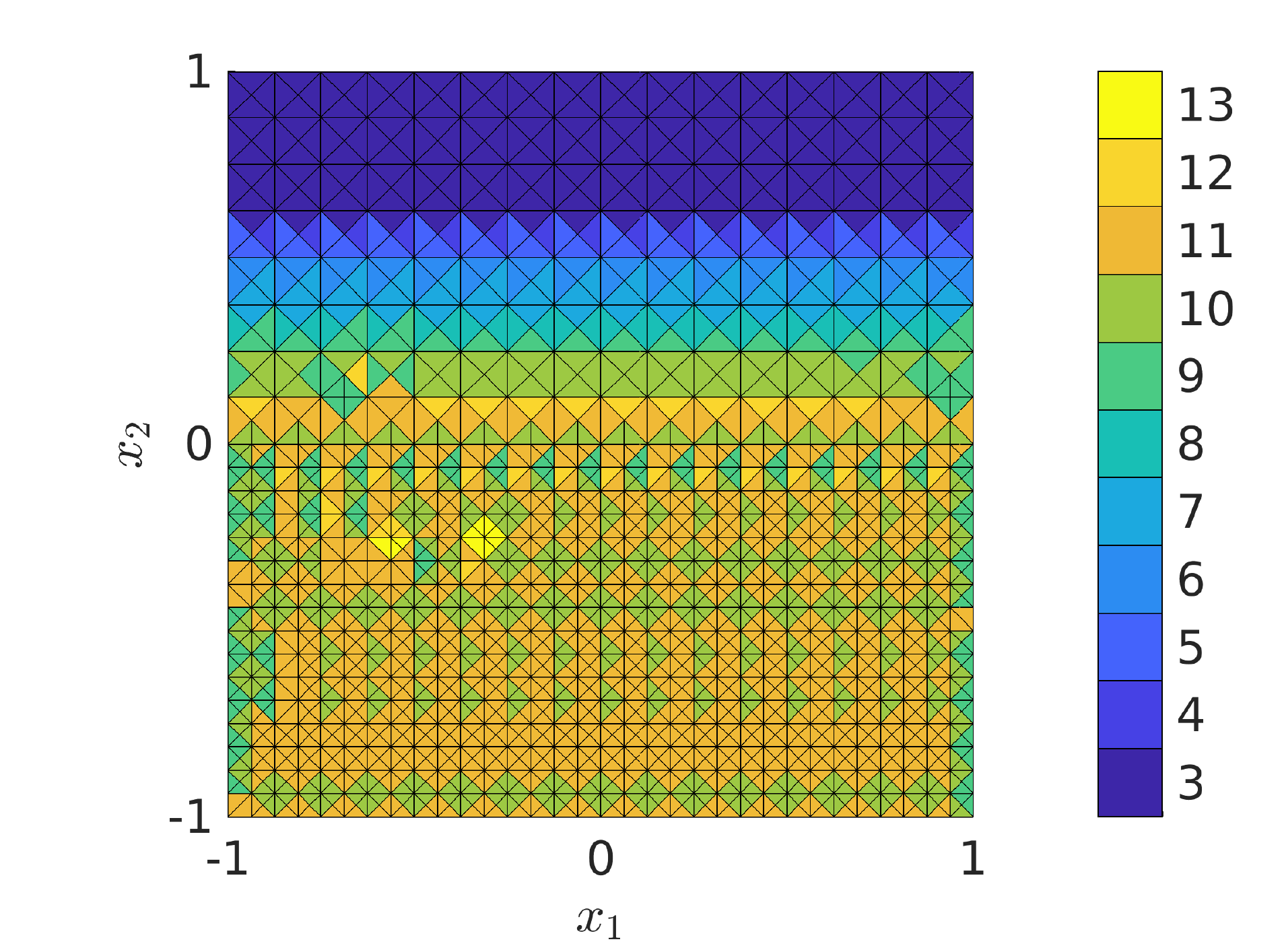}
\includegraphics[width=0.49\textwidth]{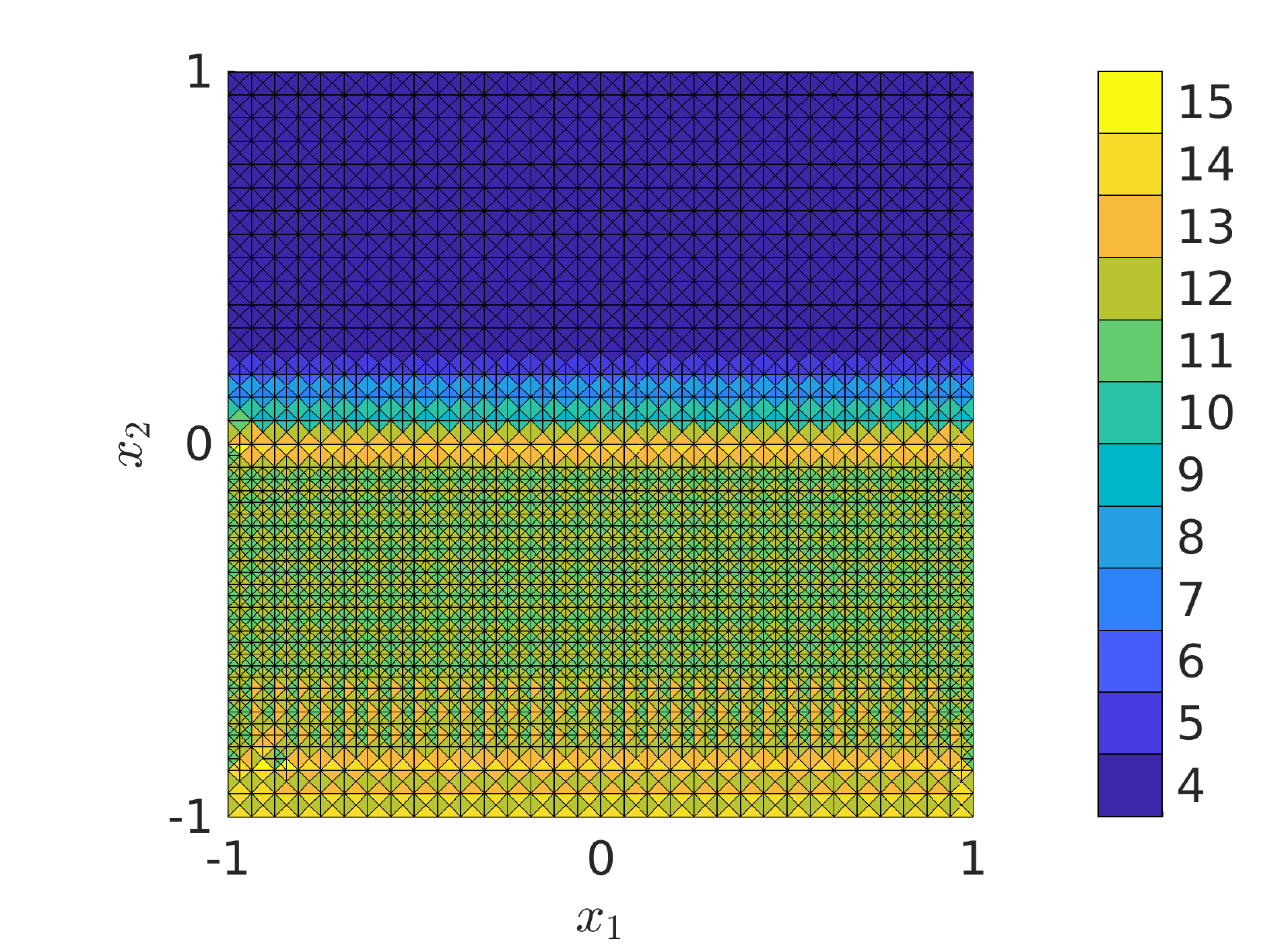}
\caption{$hp$-refined mesh for the example in Section~\ref{example:reflect} with $29^\circ$ reflection using $k=20$ (left) and $k=50$ (right),
where the polynomial degree is indicated with different shading.}
\label{fig:mesh:reflection29}
\end{figure}
\begin{figure}[tbp]
\centering
\includegraphics[width=0.49\textwidth]{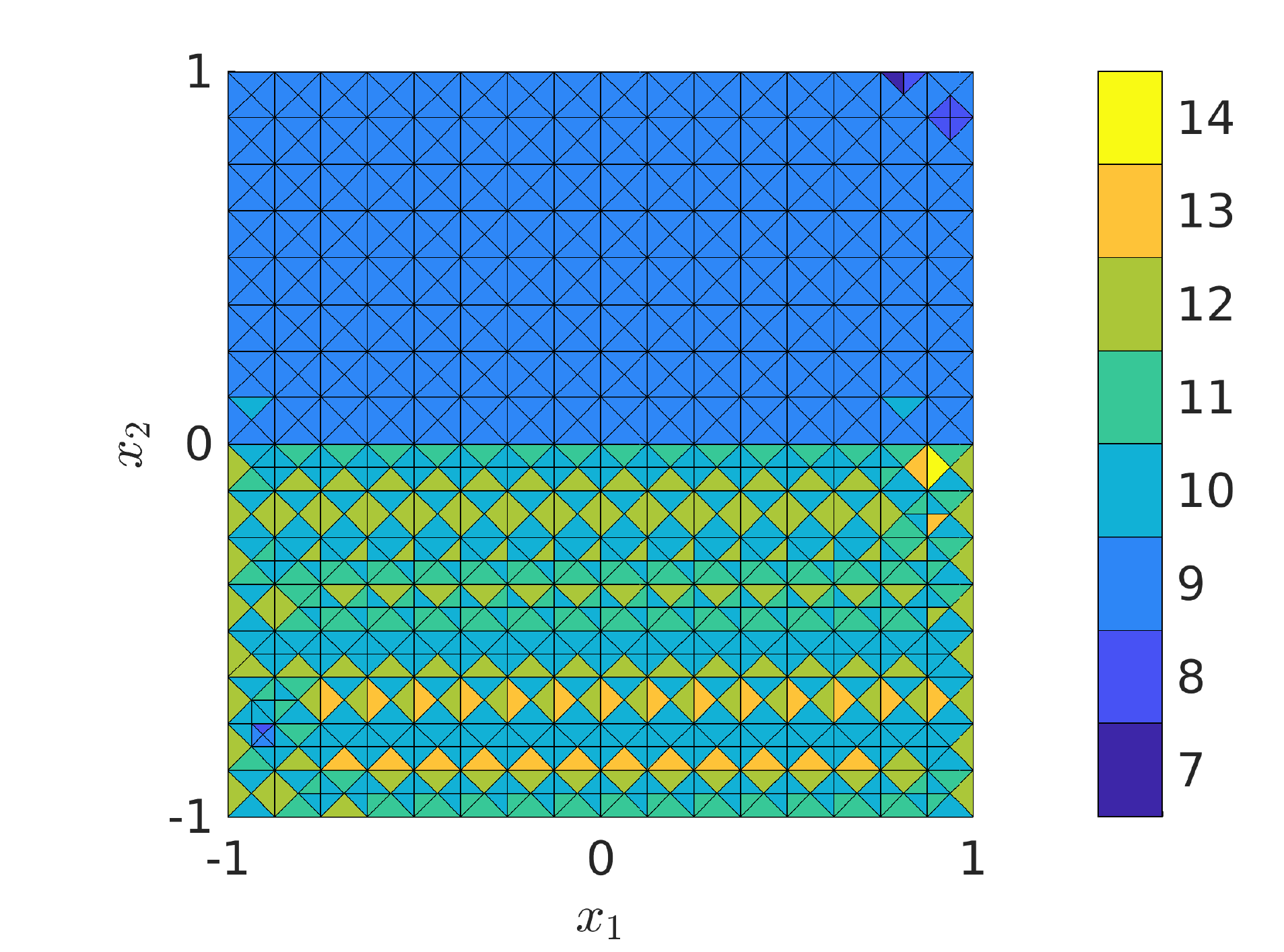}
\includegraphics[width=0.49\textwidth]{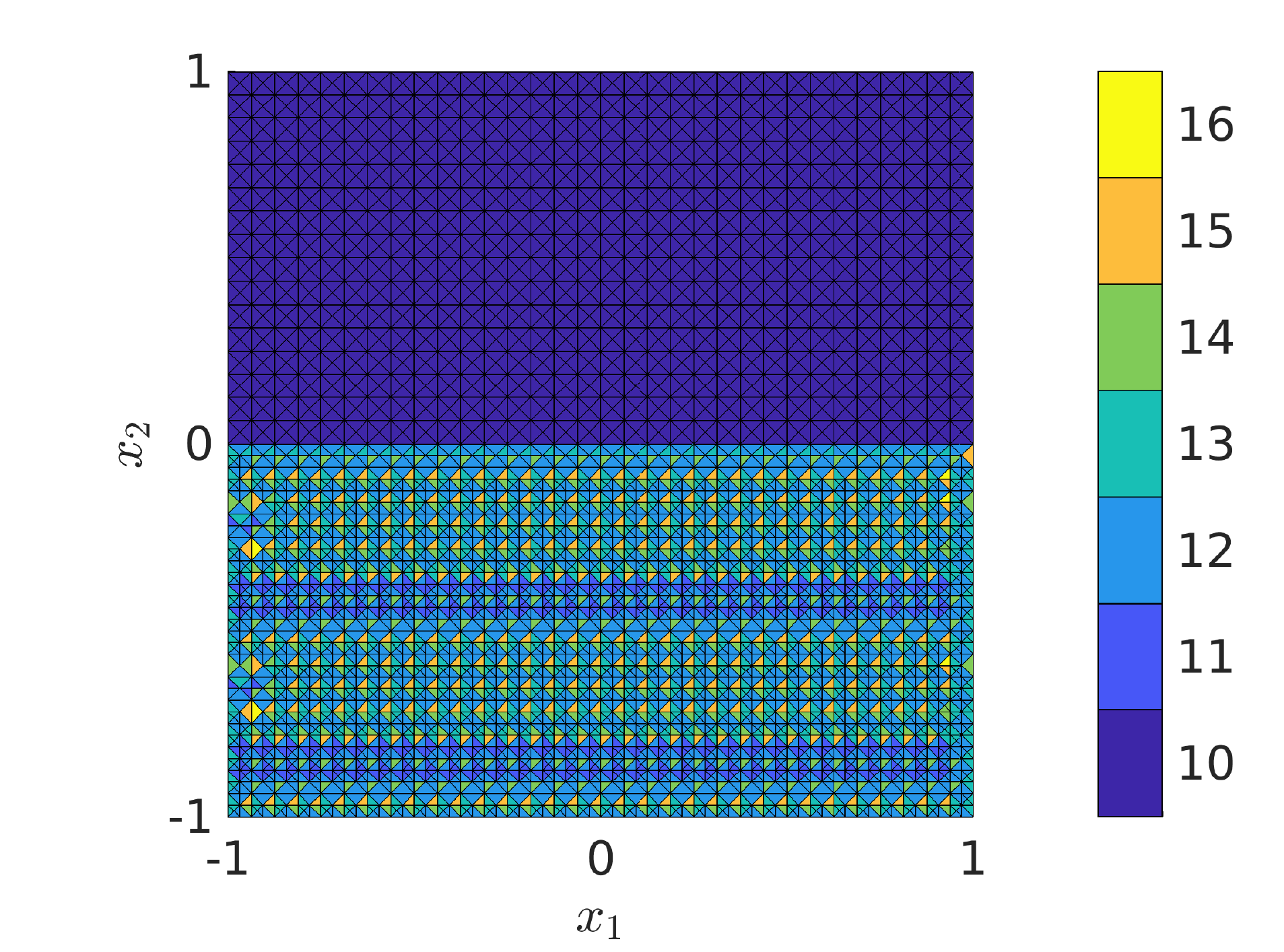}
\caption{$hp$-refined mesh for the example in Section~\ref{example:reflect} with $69^\circ$ refraction using $k=20$ (left) and $k=50$ (right),
where the polynomial degree is indicated with different shading.}
\label{fig:mesh:reflection69}
\end{figure}
Although not covered in the theoretical part, we now consider the benchmark from \cite[Section 6.3]{KMW2015}
with non-constant refractive index $\epsilon_r$; hence,
we consider the following problem
\begin{align*}
-\Delta u - k^2\epsilon_r u &= 0\quad\textrm{in }\Omega,\\
\nabla u\cdot\n - ik\sqrt{\epsilon_r}u &= g\quad\textrm{on }\partial\Omega,
\end{align*}
where
\begin{align*}
   \epsilon_r(x) = \left\{
   \begin{array}{l}
   n_1^2\quad \text{if } x_2 < 0,\\
   n_2^2\quad \text{if } x_2 \geq 0.
   \end{array}
   \right.
\end{align*}
For $\Omega=(-1,1)^2$, $n_1=2$, $n_2=1$, and $0\leq \theta < \pi/2$, one can
show that this problem admits the following solution
\begin{align*}
u(x)= \left\{
   \begin{array}{cr}
   (1+R)\exp\left(i(K_1x_1+K_3x_2)\right)& \text{if } x_2 \geq 0,\\
    \exp\left(i(K_1x_1+K_2x_2))\right) + R\exp\left(i(K_1x_1-K_2x_2)\right)& \text{if } x_2 < 0,
      \end{array}
   \right.
\end{align*}
where $K_1 = kn_1\cos(\theta)$, $K_2 = kn_1\sin(\theta)$, $K_3=k\sqrt{n_2^2-n_1^2\cos^2(\theta)}$,  and 
\[ R = -(K_3-K_2)/(K_3+K_2).\]
There exists a critical angle $\theta^*$ such that for $\theta>\theta^*$ the wave is refracted, and for $\theta<\theta^*$
the wave is internally reflected;
therefore, we compute two examples with $\theta_1=29^\circ, 69^\circ$, in order to demonstrate internal reflection and refraction, respectively.
\par

The solutions for $k=20$ and $\theta_1,\theta_2$ are displayed in Figure~\ref{fig:solution}.
Figures~\ref{fig:29} and \ref{fig:69} show exponential convergence for both $p$- and adaptive $hp$-refinement,
and the efficiency indices are again asymptotically close to 1.
Note that the initial mesh is chosen such that \eqref{eq:choice} is fulfilled with $C_{\mathrm{res}}=1/2$ and the jump of the refractive index is resolved by the mesh,
otherwise strong anisotropic mesh refinement towards the interface would be needed for fast convergence.
Interestingly, we observe in Figure~\ref{fig:29} that initially $hp$-refinement outperforms $p$-refinement; however, we generally expect $p$-refinement to
perform better, and indeed this occurs towards the end of the refinement.
Figures~\ref{fig:mesh:reflection29} and \ref{fig:mesh:reflection69} display the final $hp$-refined meshes for $k=20,50$, and $\theta_1$ and $\theta_2$ respectively.

% Gaussian Beam
\subsection{Gaussian beam simulation}\label{example:beam}
\begin{figure}[tbp]
\centering
\includegraphics[width=0.49\textwidth]{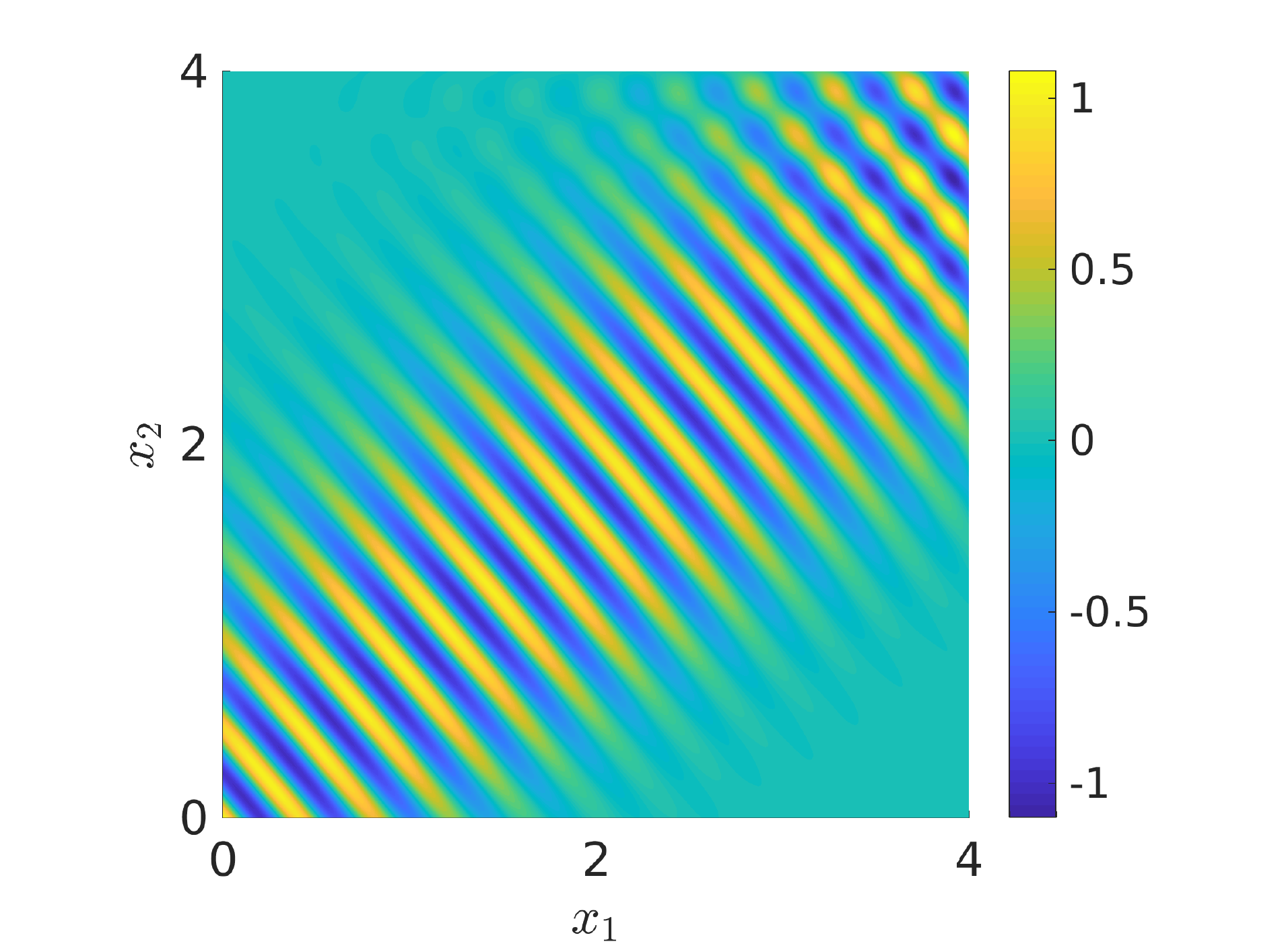}
\includegraphics[width=0.49\textwidth]{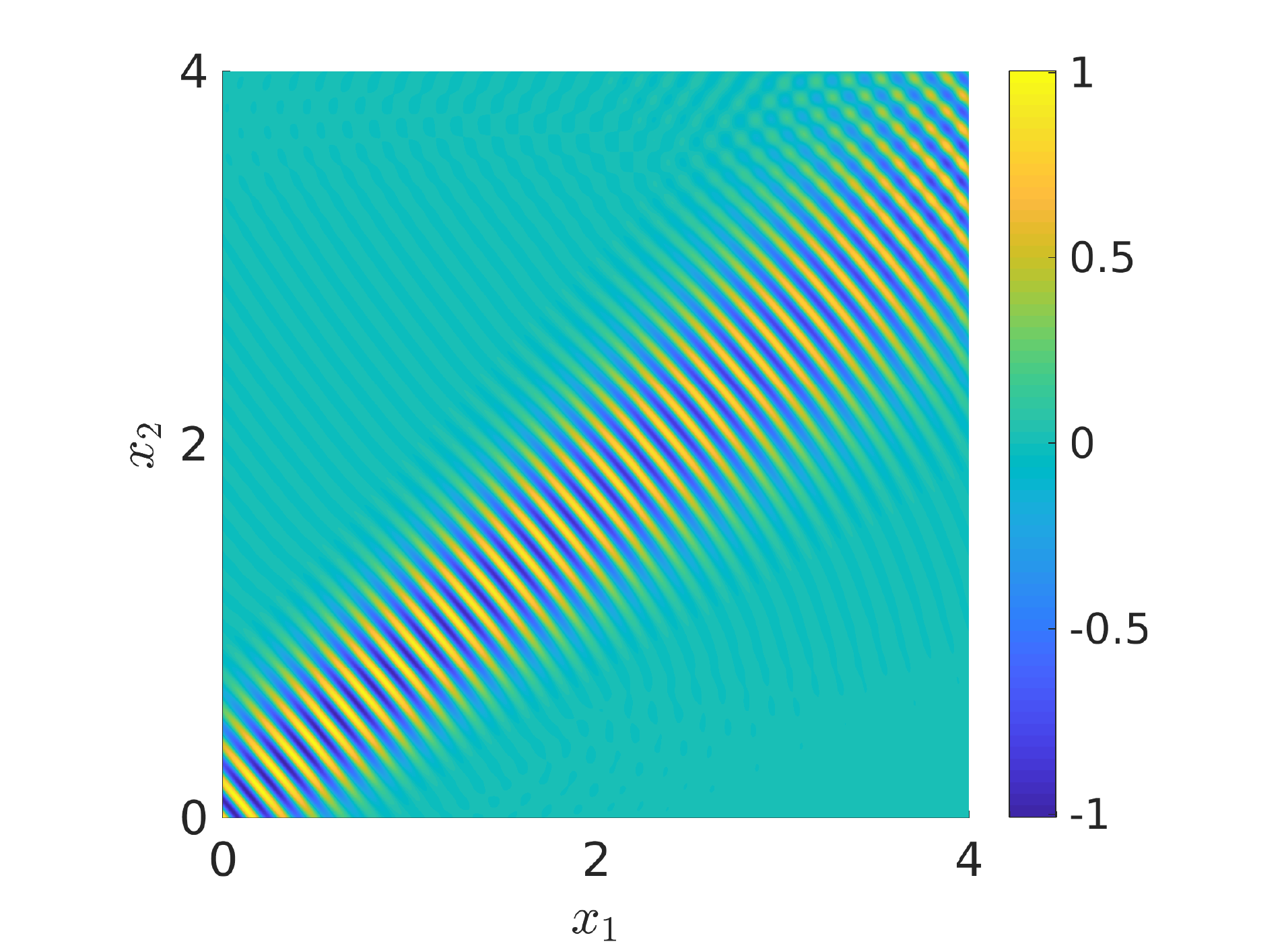}
\caption{Real parts of Gaussian beam approximations for $k=20$ (left) and $k=50$ (right).}
\label{fig:BeamSolution}
\end{figure}
\begin{figure}[tbp]
\centering
\includegraphics[width=0.49\textwidth]{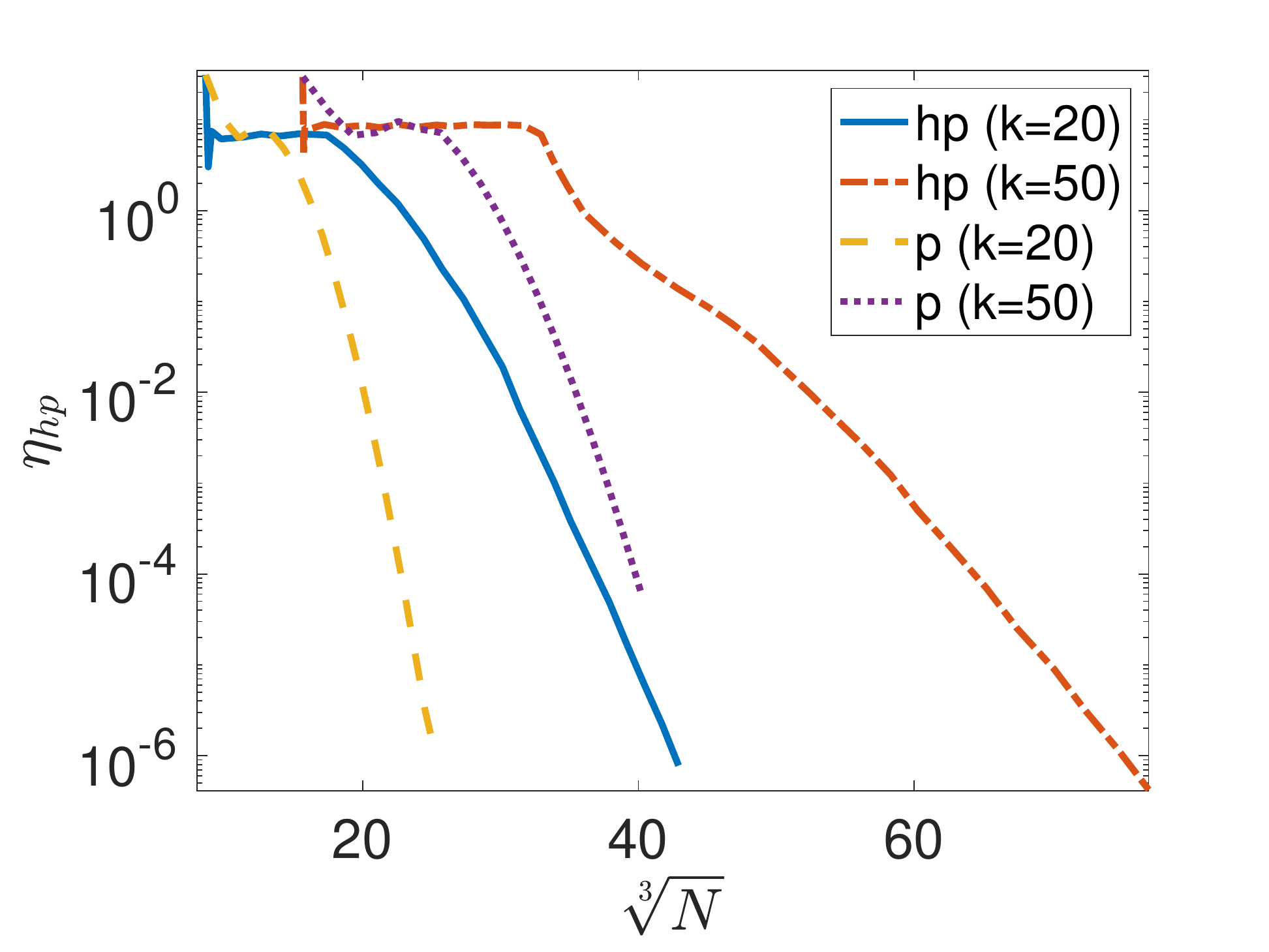}
\caption{Exponential convergence of the $p$- and $hp$-version for the Gaussian beam simulation in Section~\ref{example:beam}.}
\label{fig:BeamConvergence}
\end{figure}
\begin{figure}[tbp]
\centering
\includegraphics[width=0.49\textwidth]{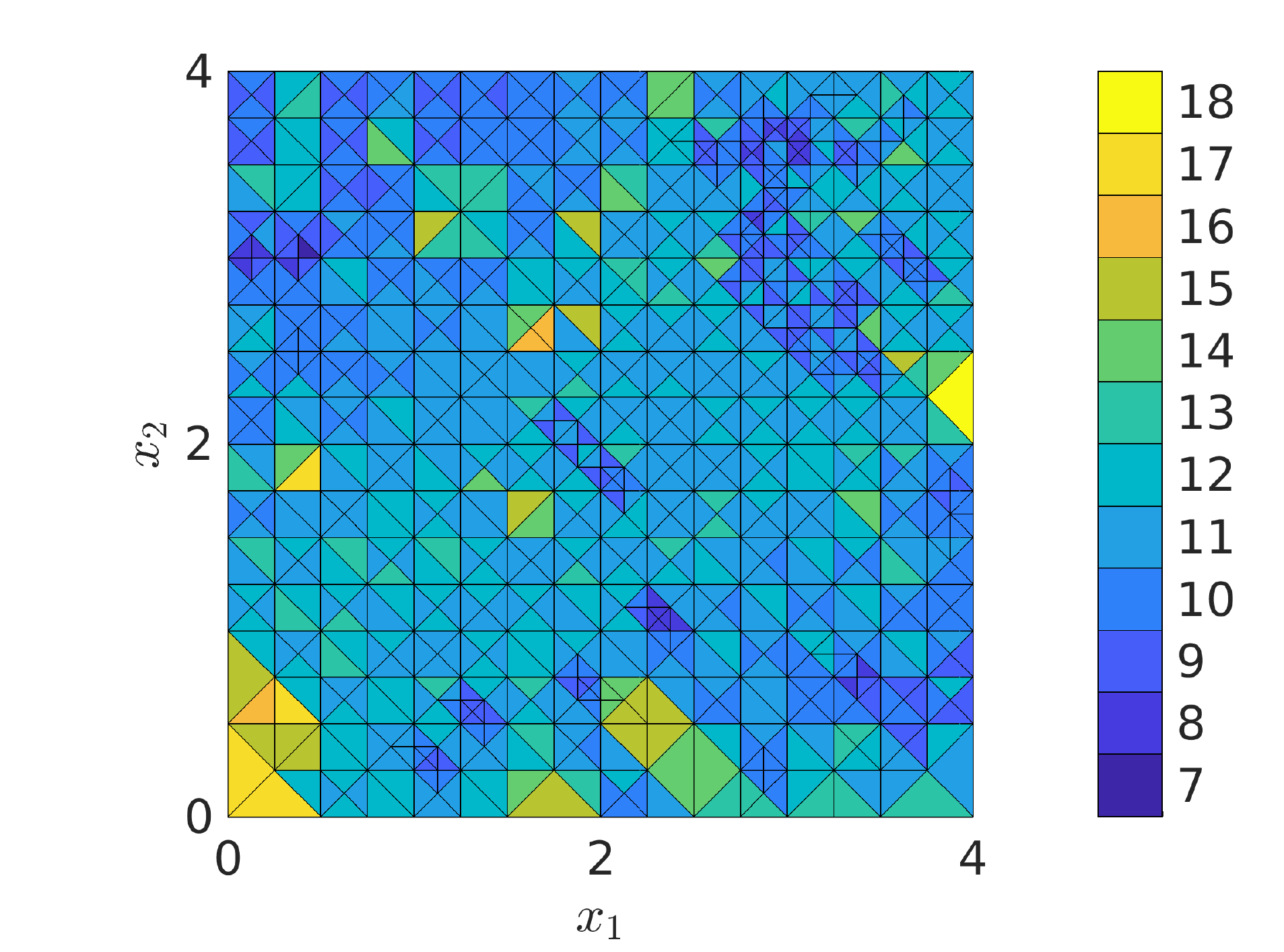}
\includegraphics[width=0.49\textwidth]{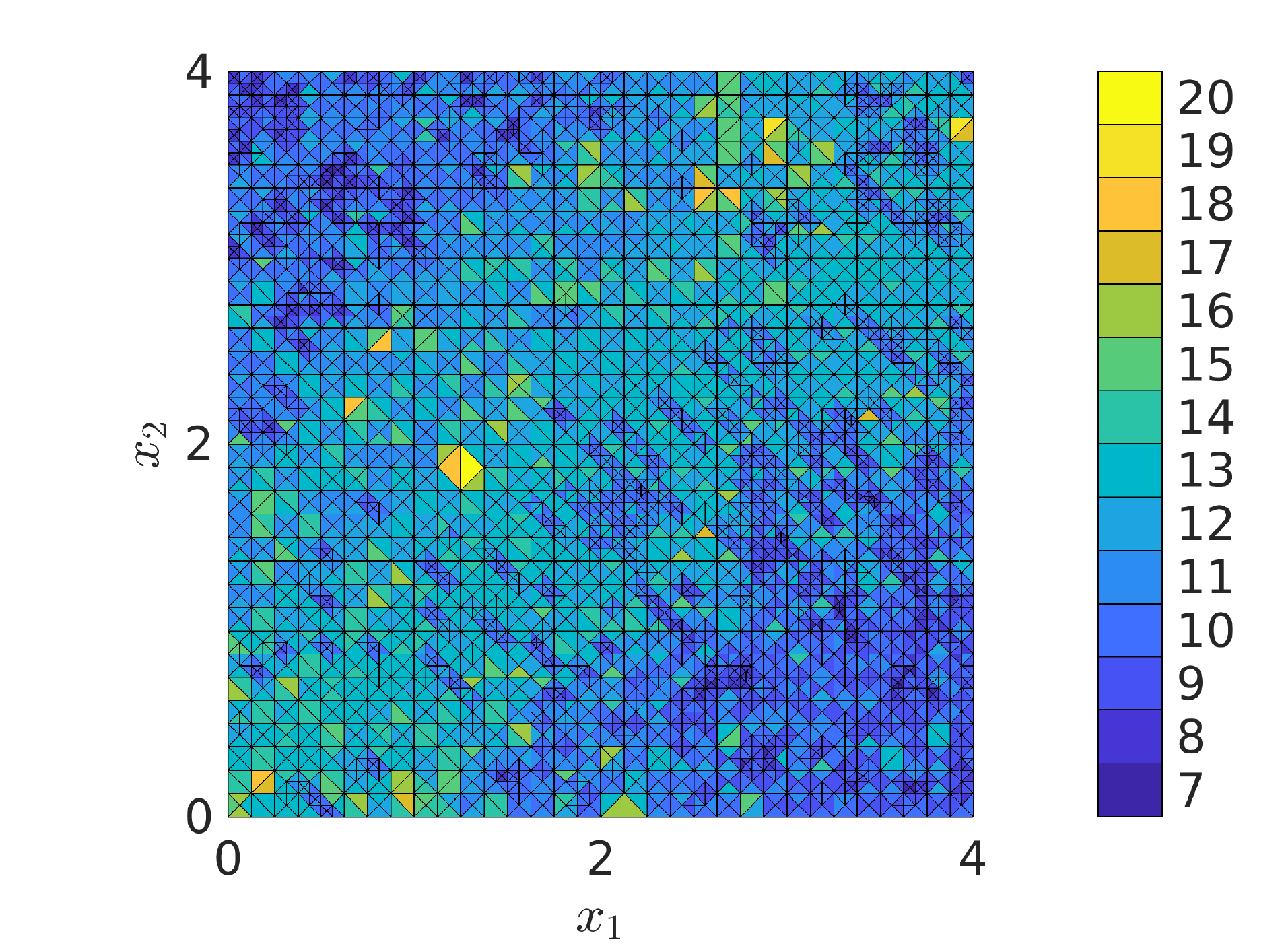}
\caption{$hp$-refined mesh for the Gaussian beam simulation in Section~\ref{example:beam} with $k=20$ (left) and $k=50$ (right),
where the polynomial degree is indicated with different shading.}
\label{fig:BeamMesh}
\end{figure}
In the last example, we consider a Gaussian beam simulation similar to the one in \cite[Section 3.7]{PD2017}.
We choose the domain $\Omega=(0,4)^2$, $f=0$,
and the inhomogeneous impedance boundary condition $g$ corresponding to the 
fundamental Gaussian beam mode that satisfies the paraxial wave equation, which reads in polar coordinates as
\begin{align*}
	v(r,\varphi) = \frac{w_0}{w} \exp\left( \frac{-r^2}{w^2} - ikz - \frac{i\pi r^2}{\lambda R} + i\theta_0\right),
\end{align*}
where $z(r,\varphi)$ is the radius of the orthogonal projection of $(r,\varphi)$ onto the direction of propagation,
$w_0$ is the beam waist radius, $R(z)$ is the radius of curvature, $w(z)$ is the beam radius, and $\phi_0(z)$ is the Gaussian beam phase shift.
We choose a $40^{\circ}$ angle for the direction of the beam, and the beam waist radius $w_0=8\pi/k$.
For the other variables we have that $\lambda=2\pi/k$,
\begin{align*}
R(z) = z + \frac{1}{z}\left( \frac{\pi w_0^2}{\lambda} \right)^2,\quad
w(z) = w_0 \left( 1 + \left( \frac{\lambda z}{\pi w_0^2} \right)^2\right)^{1/2},\quad\text{and}\quad
\tan \phi_0(z) = \frac{\lambda z}{\pi w_0^2}.
\end{align*}
Two Gaussian beam approximations for $k=20,50$ are displayed
in Figure~\ref{fig:BeamSolution}.
\par
Since the exact solution is not known in this particular example,
we only plot the values for the equilibrated a posteriori error estimator for $k=20,50$
in Figure~\ref{fig:BeamConvergence}, whose values we have demonstrated
in the previous experiments should match well with those of the true error.
For the initial mesh construction we take the rather large value $C_{\mathrm{res}}=2$; hence, we observe a pre-asymptotic
region for the convergence,
which in case of $hp$-refinement is longer than for $p$-refinement.
Due to this, $hp$-refinement leads to a higher number of degrees of freedom for the same accuracy than $p$-refinement.
However, both $p$- and $hp$-refinement lead to exponential convergence of the a posteriori error estimator.
The two final $hp$-refined meshes are displayed in Figure~\ref{fig:BeamMesh}.
In particular, for $k=50$ we observe that the polynomial degree is higher closer to the beam
than further away from the beam towards the upper left  and lower right corners of the domain.

% ==========================================
% CONCLUSION
% ==========================================
\section{Conclusion}
We have presented an equilibrated a posteriori error estimator for the indefinite Helmholtz problem based on
a non trivial extension of the unified theory for the elliptic problem using a shifted Poisson problem.
We have shown that the presented error estimator is both reliable and efficient, providing that
the equilibrated flux and potential reconstructions are suitably chosen. We have provided several numerical experiments which verify
that, after escaping the pollution regime, the a posteriori error estimator is efficient and reliable. In contrast to a
residual based a posteriori error estimator, we demonstrated that the presented error estimator is robust in the polynomial degree.

Note that the analysis for the potential reconstruction in Section~\ref{sec:potential} is a purely 2D argument.
A different analysis approach for the 3D case has recently been proposed in \cite{EV2017}, together with the extension of the 2D stability result of \cite{BPS2009}. Therefore, a potential extension of this current work would be to consider the three dimensional case.

% ==========================================
%-- Acknowledgment ------------------------------------------
% ==========================================
\section*{Acknowledgments}
The authors thank the anonymous referees for their valuable comments and suggestions that lead to an improvement of the presentation of the paper.

% ==========================================
%-- BIBLIOGRAPHY ------------------------------------------
% ==========================================

\end{document}